 \newtheorem{theorem}{Theorem}
 \newtheorem{lemma}[theorem]{Lemma}
 \newtheorem{proposition}[theorem]{Proposition}
\theoremstyle{definition}
 \newtheorem{definition}{Definition}
 \newtheorem{remark}{Remark}
 \newtheorem{conjecture}{Conjecture}
 \newcommand\zu{[0,1]}
\newcommand{\N}{\ensuremath{\mathbb N}} %natural numbers
\newcommand{\R}{\ensuremath{\mathbb R}} %real numbers
\newcommand{\Z}{\ensuremath{\mathbb Z}} %integers
\newcommand\CC{\mathcal{CC}^d}
\newcommand{\ep}{\varepsilon}
\newcommand{\suppmu}{\mbox{Supp}(\mu)}
\newcommand{\supp}{\mbox{Supp}}
\newcommand\mk{\medskip}
\newcommand\ml{multifractal }
\newcommand \omegat{ \omega^{\mu}}
\newcommand\si{\sigma}
\newcommand\btilde{\widetilde{B}}
\newcommand{\la}{\lambda}
\title{A survey on prescription of  multifractal behavior}
\author{St\'ephane Seuret }
\address{St\'ephane Seuret, Universit\'e Paris-Est, LAMA (UMR 8050),  UPEMLV, UPEC, CNRS, F-94010, Cr\'eteil, France}
\email{seuret@u-pec.fr }
\thanks{
Research   partly supported by the grant ANR MULTIFRACS.
} 
\date{\today}
\begin{document}

\maketitle

\medskip

\begin{abstract}
Multifractal behavior has been identified and  mathematically established  for large classes of functions, stochastic processes and measures.  Multifractality has also been observed on many data coming from Geophysics, turbulence, Physics, Biology, to name a few. Developing mathematical models whose scaling and multifractal properties  fit those measured on data is thus an important issue. This raises several still unsolved theoretical questions about the prescription of multifractality (i.e. how to build mathematical models with a singularity spectrum known in advance), typical behavior in function spaces, and existence of solutions to PDEs or SPDEs with possible multifractal behavior. In this survey, we gather some of the latest results in this area.
\end{abstract}
 
 \
 
\begin{center}
{\em Dedicated to   Alain Arn\'eodo,\\ pioneer in the development of wavelet tools for data analysis. 
}
\end{center}
 
 \
 
%%%%%%%%%%%%%%%%%%%%%%%%%
%%%%%%%%%%%%%%%%%%%%%%%%%
%%%%%%%%%%%%%%%%%%%%%%%%%
%%%%%%%%%%%%%%%%%%%%%%%%%
%%%%%%%%%%%%%%%%%%%%%%%%%
%%%%%%%%%%%%%%%%%%%%%%%%%
%%%%%%%%%%%%%%%%%%%%%%%%%
%%%%%%%%%%%%%%%%%%%%%%%%%
%%%%%%%%%%%%%%%%%%%%%%%%%
%%%%%%%%%%%%%%%%%%%%%%%%%
%%%%%%%%%%%%%%%%%%%%%%%%%
%%%%%%%%%%%%%%%%%%%%%%%%%
%%%%%%%%%%%%%%%%%%%%%%%%%
\section{Multifractality between pure and applied mathematics}\label{introduction}

The notion of multifractal functions and measures can be traced back to the interest of physicists in  the H\"older singularities structure in fully developed turbulence, which is described in terms of large deviations for the distribution at small scales of Mandelbrot random multiplicative cascades in \cite{M2}, and in a geometric setting in the version of the so-called multifractal formalism for functions proposed by Frisch and Parisi \cite{FrischParisi}, see Section \ref{sec-fp}. Another source leading to   multifractal ideas  is provided     by the works of Henschel \& Procaccia \cite{Hentschel} and Halsey $\&$ al. \cite{Halsey}. Since then, multifractal analysis was further developed  in dynamical systems theory and geometric measure theory, and has become a standard tool to describe the fine geometric structure of objects possessing nice invariance properties, such as   self-similar and self-affine   measures and functions, many classes of  stochastic processes such as    L\'evy processes and more general Markov processes, as well as random measures emerging from multiplicative chaos theory. 

\medskip

Let us  recall  the notion of singularity spectrum of a function, leading to multifractals.

Let $d\ge 1$ be an integer. Given a real
function $f \in L^\infty_{loc}(\R^d)$ and $x_0 \in \R^d$, $f$ is said to
belong to ${\mathcal C}^{H}(x_0)$, for some $H\geq 0$, if
there exists a polynomial $P$ of degree at most $\lfloor H \rfloor$
and a constant $C>0$ such that 
$$
\mbox{for $x$ close  to $x_0$, } \ \ \   |f(x)-P(x-x_0)|\leq C | x-x_0|^{H}.
$$

%%%%%%%%%%%%%%%%%%%%%%%%%%%%%%%%%%%%%%
\begin{definition}
 The {\it pointwise
  H\"older exponent}  of $f\in L^\infty_{loc}(\R^d)$  at $x_0$ is 
$$h_f(x_0)=\sup\left \{ H \geq 0:~f \in {\mathcal C}^{H}(x_0)\right \},$$
and  $f$ is said to have a H\"older singularity of order $h_f(x_0)$ at $x_0$. 

The {\it singularity spectrum} $D_f$ of $f$  is the
map: $$D_f: H \in[0,\infty] \longmapsto \dim 
\, E_f(H), \  \mbox{ where } E_f(H):= \{x_0\in \R^d: h_F(x_0)=H\}.$$
\end{definition}
%%%%%%%%%%%%%%%%%%%%%%%%%%%%%%%%%%%%%%

The notation $\dim$ stands for the Hausdorff dimension, and by convention $\dim \emptyset =-\infty$.

The multifractal spectrum $D_f$ encapsulates key information on  a given function $f$, in particular it  carries a description of the distribution of the singularities of  $f$. But the computation of $D_f$ often raises deep mathematical questions (for instance, it took almost 130 years to find the multifractal spectrum of the famous Riemann series $\displaystyle  \sum_{n=1}^{+\infty}\frac{\sin (n^2\pi x)}{n^2}$), and in most cases  the exact value of $D_f$ happens to be not directly accessible, neither theoretically nor numerically.

Fortunately, the notion of multifractal formalism furnishes a  clever way to circumvent this difficulty and to compute the explicit value of the spectrum of large classes of measures and  functions. Also, multifractal formalism provides ideas to develop   numerical algorithms able to estimate $D_f$ on real-life data. The main idea is that for very large classes of functions $f$ (and also for other mathematical objects like  measures, stochastic processes - such examples will be given in this paper), $D_f$ is equal to the Legendre transform of the so-called $L^q$-spectrum $\tau_f$ of $f$: this $L^q$-spectrum is computed directly using the values of $f$, and is numerically accessible. When  these two quantities ($D_f$ and the Legendre transform of $\tau_f$) coincide, it is said that $f$ satisfies the multifractal formalism.  Examples of $L^q$-spectra  for functions (and measures) based  on increments, wavelet coefficients or wavelet leaders, are given in the upcoming sections  (see \eqref{defzeta}, \eqref{def-tau}, \eqref{def-eta} or  \eqref{def-L}). The intuition that a multifractal formalism should hold is due to U. Frisch and G. Parisi, we refer the reader to  Section  \ref{sec-fp} for an account on the ideas leading to this formula.

\begin{figure}
\includegraphics[scale=1.3]{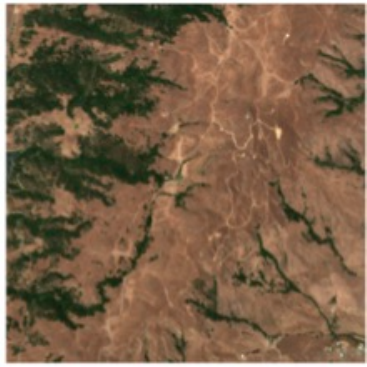}  \includegraphics[scale=1.2]{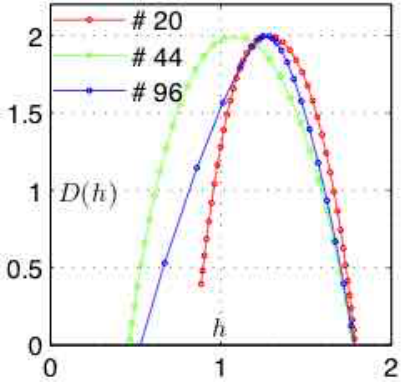}
\caption{Image and estimated multifractal spectrum of different color le\-vels of a satellite  image. {\em Courtesy H. Wendt}.} 
\label{fig_examples}
\end{figure}

The multifractal formalism, and its validity for many mathematical models,  explains the success of the multifractal approach    used as classification tool in signal and image processing. Indeed,    algorithms have been developed (mainly based on wavelet theory, see \cite{Muzy-arneodo-1} for the original WTMM method and more recently  \cite{AJW-1} for a mathematical study of the wavelet leaders algorithm and the latest developments and algorithms based on wavelet leaders)  to estimate numerically $L^q$-scaling functions, the stability and efficiency of these algorithms being mathematically grounded. Using these algorithms, it is now established that  many data coming from Geophysics, turbulence, Physics, Biology,   exhibit non-linear $L^q$-scaling functions, which for a given function $f$ is interpreted thanks to the Frisch-Parisi heuristics as a non-trivial singularity spectrum $D_f$  of  $f$. Examples of data and estimated singularity spectra are plotted in Figure \ref {fig_examples} and \ref{fig_examples2}.

\medskip

Resuming the above, we have on one side many mathematical objects $f$ with non-linear $L^q$-scaling functions and a non-trivial singularity spectrum $D_f$, and on the other side an impressive quantity of signals, images and  multivariate, multi-dimensional data whose estimated $L^q$-spectra and singularity spectra are non-trivial. It is worth asking which mathematical objects are indeed the most relevant to model the observed data, and how to create models with any reasonable  \ml behavior.

This general problematics can be understood in various ways, and raises several theoretical questions, most of them still  being   open:
\begin{enumerate}
\item
What are the  mappings $\sigma: \R^+\to  [0,d]\cup\{-\infty\}$ that are admissible to be a multifractal spectrum, i.e. there exists a function $f:\R^d \to \R $  such that $D_f = \sigma$?

\item
 What are the  mappings $\sigma: \R^+\to  [0,d]\cup\{-\infty\}$ that are admissible to be a {\em homogeneous} multifractal spectrum, i.e.  there exists a function $f:\R^d \to \R$  such for every  cube $I\subset \R^d$ with non-empty interior,  $D_{f_I} = \sigma$ where $f_I$ stands for the restriction of $f$ on $I$?

\item
Given an admissible (homogeneous or not) singularity spectrum $\sigma: \R^+\to  [0,d]\cup\{-\infty\}$, is there a functional space in which Baire typical functions have $\sigma$ as singularity spectrum? Do typical functions satisfy a \ml formalism?

\item
Given an admissible  (homogeneous or not) singularity spectrum $\sigma: \R^+\to  [0,d]\cup\{-\infty\}$, is there a differential equation, a PDE or a stochastic (P)DE whose solution has $\sigma$ as singularity spectrum? 
\end{enumerate} 

\begin{figure}
 \includegraphics[scale=1.5]{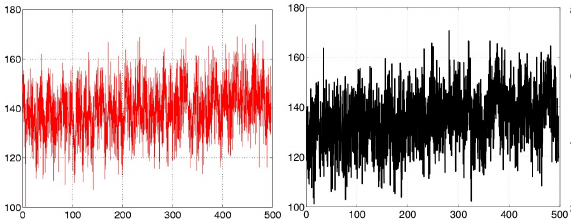} \includegraphics[scale=1.5]{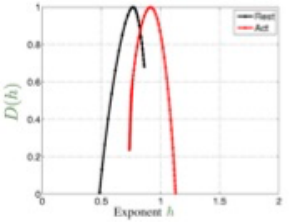}
\caption{Two FMRI signals  of a resting (in black) and acting (in red) patient. Comparison between  their estimated \ml spectrum. {\em Courtesy H. Wendt.}} 
\label{fig_examples2}
\end{figure}

These problems have their counterpart in terms of $L^q$-spectra: replacing everywhere  $\sigma: \R^+\to  [0,d]\cup\{-\infty\}$  by $\tau:\R\to\R$, one may ask for the admissible $\tau$ that can be the $L^q$-spectrum of a function (homogeneous or not), and if such an $L^q$-spectrum is typical in some functional space. 

The same questions arise when considering probability measures instead of functions. The main difference with the function setting is that there are additional constraints when dealing with measures, see Sections \ref{sec-expo} and   \ref{sec-prescrmu}.

\medskip

Although the tools used in the two contexts (functions and measures) are of different nature, a connection between the two situations is provided by the following theorem from \cite{BSFmu}, based on wavelet analysis.

\begin{theorem}
\label{th-Fmu}
Let $\mu$ be a probability measure on $\R^d$ such that there exist  $\alpha,C>0$  satisfying that for every $x\in \R^d$ and $0\leq r \leq 1$, $\mu(B(x,r)) \leq C r^\alpha$.

Consider the function $F_\mu:\R^d\to \R $ whose wavelet coefficients are given by $d_\lambda = \mu(\lambda)$ for every dyadic cube $\lambda\in \Lambda$ (see Section \ref{sec-prescrfunc} for   definitions).

Then the multifractal spectra of $\mu$ and $F_\mu$ coincide.
\end{theorem}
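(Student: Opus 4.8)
The plan is to establish the stronger \emph{pointwise} identity $h_{F_\mu}(x_0)=\underline{\dim}_\mu(x_0)$ for every $x_0\in\R^d$, where $\underline{\dim}_\mu(x_0):=\liminf_{r\to 0}\frac{\log\mu(B(x_0,r))}{\log r}$ is the lower local dimension of $\mu$ at $x_0$ (the notion relevant to the \ml spectrum of $\mu$, matching the $\liminf$ in the H\"older exponent). This immediately gives $E_{F_\mu}(H)=E_\mu(H)$ for all $H\in[0,\infty]$, hence $D_{F_\mu}=D_\mu$. The main tool is Jaffard's characterization of the pointwise H\"older exponent via wavelet leaders: writing $\lambda_j(x_0)$ for the dyadic cube of generation $j$ containing $x_0$ and $3\lambda_j(x_0)$ for the union of $\lambda_j(x_0)$ with its $3^d-1$ neighbours, the leader of $F_\mu$ at $x_0$ and scale $j$ is $\ell_j(x_0)=\sup\{|d_{\lambda'}|:\lambda'\subseteq 3\lambda_j(x_0)\}$, and Jaffard's theorem asserts that if $F_\mu$ is uniformly H\"older then $h_{F_\mu}(x_0)=\liminf_{j\to\infty}\frac{-\log_2\ell_j(x_0)}{j}$. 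So the first step is to verify this global regularity: since every dyadic cube $\lambda$ of generation $j$ satisfies $\lambda\subseteq B(x_\lambda,\sqrt d\,2^{-j})$, the hypothesis on $\mu$ gives $d_\lambda=\mu(\lambda)\leq C(\sqrt d)^\alpha\,2^{-j\alpha}$; hence, for a wavelet with enough smoothness and vanishing moments, the series $F_\mu=\sum_{\lambda\in\Lambda}d_\lambda\psi_\lambda$ converges in $\mathcal C^{\alpha'}(\R^d)$ for every $\alpha'<\alpha$, so $F_\mu$ is uniformly H\"older and Jaffard's formula applies. Note that the same bound forbids atoms, so $\mu(B(x_0,r))\to 0$ as $r\to 0$ for every $x_0$.

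The heart of the argument is a sandwich estimate comparing $\ell_j(x_0)$ with $\mu(B(x_0,2^{-j}))$ up to multiplicative and scale-shift constants. Because $d_\lambda=\mu(\lambda)\ge 0$ and $\mu(\lambda')\le\mu(\lambda)$ whenever $\lambda'\subseteq\lambda$, the supremum defining $\ell_j(x_0)$ is attained on generation-$j$ subcubes of $3\lambda_j(x_0)$, of which there are at most $3^d$ and whose union equals $3\lambda_j(x_0)$; hence $\frac{1}{3^d}\mu(3\lambda_j(x_0))\le\ell_j(x_0)\le\mu(3\lambda_j(x_0))$. A direct check on coordinates shows $B(x_0,2^{-j})\subseteq 3\lambda_j(x_0)\subseteq B(x_0,3\sqrt d\,2^{-j})$, so
$$\frac{1}{3^d}\,\mu\bigl(B(x_0,2^{-j})\bigr)\ \le\ \ell_j(x_0)\ \le\ \mu\bigl(B(x_0,3\sqrt d\,2^{-j})\bigr).$$

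Taking $-\log_2$, dividing by $j$, and letting $j\to\infty$: the factor $3^{-d}$ contributes an $O(1/j)$ term that vanishes, while replacing the radius $3\sqrt d\,2^{-j}$ by $2^{-(j-a)}$ with $a=\lceil\log_2(3\sqrt d)\rceil$ only shifts the scale index by a bounded amount, which does not change the $\liminf$ (here one uses $\mu(B(x_0,r))\to 0$, so that the numerators tend to $+\infty$). Combined with the elementary fact that, $\mu(B(x_0,r))$ being monotone in $r$, the $\liminf$ over continuous radii equals the $\liminf$ over dyadic radii, this yields $h_{F_\mu}(x_0)=\liminf_{j}\frac{-\log_2\ell_j(x_0)}{j}=\liminf_{r\to 0}\frac{\log\mu(B(x_0,r))}{\log r}=\underline{\dim}_\mu(x_0)$ for every $x_0$. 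Consequently $E_{F_\mu}(H)=E_\mu(H)$ for every $H$, and the two singularity spectra coincide.

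I expect the main obstacle to be the careful invocation of the wavelet-leader machinery rather than the measure-theoretic sandwich, which is elementary. One must check that the wavelet basis, normalization, and the $3\lambda$ convention adopted in Section~\ref{sec-prescrfunc} are exactly those for which Jaffard's two-sided estimate $h_f(x_0)=\liminf_j\frac{-\log_2\ell_j(x_0)}{j}$ holds as a genuine equality for uniformly H\"older $f$ --- in particular that it is the leaders, and not the bare wavelet coefficients, that govern the exponent, so that possible oscillating (chirp-type) singularities of $\mu$ are correctly captured --- and that the uniform bound $d_\lambda\le C(\sqrt d)^\alpha 2^{-j\alpha}$ coming from the regularity hypothesis indeed places $F_\mu$ in the required global H\"older class, which is precisely where the assumption $\mu(B(x,r))\le Cr^\alpha$ is used. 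A secondary point is to confirm that the \ml spectrum of $\mu$ is understood with the lower local dimension; under any other convention one would first need the standard fact that the corresponding level sets have the same Hausdorff dimensions.
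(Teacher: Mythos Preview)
The paper does not give its own proof of Theorem~\ref{th-Fmu}; the result is quoted from~\cite{BSFmu} and used as a black box in Section~\ref{sec-prescrfunc}. Your argument is correct and is the natural route: the regularity hypothesis yields $|d_\lambda|\le C'2^{-j\alpha}$, placing $F_\mu$ in $C^{\alpha'}(\R^d)$ for every $\alpha'<\alpha$ and legitimizing the leader formula~\eqref{eq-carac}; the monotonicity $\mu(\lambda')\le\mu(\lambda)$ for $\lambda'\subset\lambda$ collapses the leader at scale $j$ to $\max\{\mu(\lambda^*):\lambda^*\in\mathcal D_j,\ \lambda^*\subset 3\lambda_j(x_0)\}$, and your sandwich between $\mu(B(x_0,2^{-j}))$ and $\mu(B(x_0,c\,2^{-j}))$ then gives the pointwise identity $h_{F_\mu}(x_0)=h_\mu(x_0)$. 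Your worry about the convention for the local dimension is resolved by the paper itself, which defines $h_\mu$ as a $\liminf$ in~\eqref{defhmu}; and the same monotonicity of $\mu$ on nested dyadic cubes that you exploit is precisely what the paper invokes a few lines after using the theorem, when identifying $\tau_\mu$ with $L_{F_\mu}$.
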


\medskip

Our purpose here is to provide a survey on   recent results and  on some open problems related to these various research directions, which combine many ideas coming from (and having applications to) geometric measure theory, functional and harmonic analysis, and real analysis, as well as ergodic theory and dynamical systems.

%%%%%%%%%%%%%%%%%%%%%%%%%%%%%%%%%%
%%%%%%%%%%%%%%%%%%%%%%%%%%%%%%%%%%
%%%%%%%%%%%%%%%%%%%%%%%%%%%%%%%%%%
%%%%%%%%%%%%%%%%%%%%%%%%%%%%%%%%%%
%%%%%%%%%%%%%%%%%%%%%%%%%%%%%%%%%%
%%%%%%%%%%%%%%%%%%%%%%%%%%%%%%%%%%
%%%%%%%%%%%%%%%%%%%%%%%%%%%%%%%%%%
%%%%%%%%%%%%%%%%%%%%%%%%%%%%%%%%%%
%%%%%%%%%%%%%%%%%%%%%%%%%%%%%%%%%%
%%%%%%%%%%%%%%%%%%%%%%%%%%%%%%%%%%
%%%%%%%%%%%%%%%%%%%%%%%%%%%%%%%%%%
%%%%%%%%%%%%%%%%%%%%%%%%%%%%%%%%%%
%%%%%%%%%%%%%%%%%%%%%%%%%%%%%%%%%%
%%%%%%%%%%%%%%%%%%%%%%%%%%%%%%%%%%
\section{Prescription of   exponents and local dimensions}
 \label{sec-expo}

For a given mapping $f:\R^d\to \R$ belonging to $L^\infty_{loc}(\R^d)$, its associated  pointwise H\"older exponent mapping $h_f:x\mapsto h_f(x)$ may be very erratic, changing violently from one point to the other.  Nevertheless $h_f$ (viewed as a function) is quite well understood, as confirmed by  the following theorem by S. Jaffard which provides  a full characterization of $h_f$ \cite{JAFFNOTE,jaffard-prescribed}. Recall that $C^{\log}(\R^d)$ is the space of those functions $f:\R^d\to\R$ satisfying that there exists $C>0$ such that for every $x,y\in \R^d$ with $|x-y|\leq 1/2$, $|f(x)-f(y)| \leq C |\log|x-y||^{-1}$.

\begin{theorem}
\label{th_jaf1}
When $f\in C^{\log}(\R^d)$,  the mapping $h_f$ is a liminf of a sequence of continuous functions. 

Conversely, let $H:\R^d \to \R^+\cup\{+\infty\}$ be a  liminf of a sequence of continuous functions. There exists a function $f:\R^d\to \R$, 
$f\in C^{\log}(\R^d)$, such that for every $x\in \R^d$, $h_f(x)=H(x)$.
\end{theorem}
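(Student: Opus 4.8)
The two implications lie at very different depths. For the \emph{direct} one, the plan is to write $h_f$ explicitly as a $\liminf$ of functions that are continuous at each fixed scale. For $j\in\N$ let $\omega^{(j)}_f(x,2^{-j})$ denote the supremum of the moduli of the $j$-th order finite differences of $f$ taken inside the ball $B(x,2^{-j})$, and set
\[
g_j(x)\ :=\ \frac{-\log\!\big(2^{-j^{2}}+\omega^{(j)}_f(x,2^{-j})\big)}{j\,\log 2}.
\]
Since $f\in C^{\log}(\R^d)$ is (uniformly) continuous, the $j$-th order finite differences of $f$ depend continuously on the base point, uniformly over increments of size $\le 2^{-j}$, so each $g_j$ is continuous (the term $2^{-j^{2}}$, negligible elsewhere, keeps $g_j$ finite and continuous where $f$ is locally polynomial). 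One then checks, via the classical finite-difference characterization of the pointwise H\"older spaces ${\mathcal C}^{H}(x_0)$, that $h_f(x)=\liminf_{j\to\infty}g_j(x)$ for every $x$: the inequality $h_f(x)\le\liminf_j g_j(x)$ follows from $f\in{\mathcal C}^{\,h_f(x)-\ep}(x)$ for all $\ep>0$, which gives $\omega^{(j)}_f(x,2^{-j})\le C\,2^{-j(h_f(x)-\ep)}$ at every large scale, and the reverse inequality from $f\notin{\mathcal C}^{\,h_f(x)+\ep}(x)$, which forces $\omega^{(j)}_f(x,2^{-j})\ge 2^{-j(h_f(x)+\ep)}$ along a sequence of scales. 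The hypothesis $f\in C^{\log}(\R^d)$ is what makes this oscillation formula \emph{sharp}; and, as the converse shows, $C^{\log}(\R^d)$ is the optimal class for such a statement.

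For the \emph{converse}, write $H=\liminf_n H_n$ with $H_n:\R^d\to\R$ continuous; replacing $H_n$ by $\max(H_n,0)$ (which does not change the $\liminf$, since $H\ge 0$) we may assume $H_n\ge 0$. The plan is to build $f$ through a wavelet series $f=\sum_\lambda c_\lambda\psi_\lambda$ in a smooth, compactly supported basis: one spreads the indices $n$ over disjoint, very lacunary blocks of dyadic generations, and at a generation $j$ lying in the block attached to $n$ one activates a carefully chosen family of dyadic cubes $\lambda$, assigning to each the coefficient $c_\lambda\approx 2^{-j\,H_n(x_\lambda)}$, with $x_\lambda$ the centre of $\lambda$ and $c_\lambda=0$ for the non-activated cubes; all coefficients are truncated so that $|c_\lambda|\le j^{-2}$ at generation $j$, a bound that forces $f\in C^{\log}(\R^d)$ by a direct estimate of $|f(x)-f(y)|\le\sum_\lambda|c_\lambda|\,|\psi_\lambda(x)-\psi_\lambda(y)|$. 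The identity $h_f=H$ then splits into two halves. For the \emph{upper bound} $h_f(x)\le H(x)$: among the infinitely many $n$ realising the $\liminf$, $H_n(x)$ is close to $H(x)$, hence so is $H_n(x_\lambda)$ for activated cubes $\lambda$ close to $x$, by continuity of $H_n$; such cubes, occurring at infinitely many scales, carry a coefficient essentially equal to $2^{-j\,H(x)}$, which drags $h_f(x)$ down to $H(x)$ (and to $+\infty$ when $H(x)=+\infty$, since then $H_n(x)\to+\infty$). For the \emph{lower bound} $h_f(x)\ge H(x)$: since $H_n(x)\ge H(x)-\ep$ for all large $n$, at all large scales no activated coefficient relevant to the local behaviour of $f$ at $x$ exceeds $2^{-j(H(x)-\ep)}$, whence $f\in{\mathcal C}^{\,H(x)-\ep}(x)$ for every $\ep>0$.

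The \textbf{main obstacle} is making the lower bound $h_f(x)\ge H(x)$ actually hold. The H\"older regularity of a wavelet series at $x$ is governed by \emph{all} coefficients attached to cubes meeting a whole neighbourhood of $x$ (through the wavelet leaders $\sup_{\lambda'\subset 3\lambda}|c_{\lambda'}|$), not only by cubes centred near $x$; and because $H$, being merely a $\liminf$ of continuous functions, may be very small on a set dense near a point where it is large, a cube placed to lower the regularity at some $y$ with $H(y)$ tiny could carry a coefficient of order $2^{-j\delta}$ with $\delta\ll H(x)$ and thereby destroy the regularity at a nearby $x$. Preventing this is precisely why the activated families must be \emph{sparse} and the blocks of scales \emph{lacunary}, with the sparsity rate, the block lengths, and a preliminary regularization of the $H_n$'s (e.g.\ by Lipschitz approximants) all calibrated against the moduli of continuity of the $H_n$'s, so that near any given $x$ the only coefficients influencing $f$ at a large scale $j$ are those deliberately placed for $x$ itself. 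Setting up this balance is the technical heart of the proof; once it is in place, the $C^{\log}$ bound and the remaining verifications are routine.
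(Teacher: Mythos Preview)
The paper does \emph{not} prove this theorem: it is a survey, and Theorem~\ref{th_jaf1} is stated as a result of Jaffard with a citation to \cite{JAFFNOTE,jaffard-prescribed}, with no proof or sketch given in the text. There is therefore nothing in the paper to compare your attempt against.

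That said, your plan is broadly consonant with Jaffard's original arguments in the cited references. For the direct implication, Jaffard works with wavelet leaders rather than finite-difference oscillations (via the characterization~\eqref{eq-carac} recalled later in the paper), but the structure is the same: at each scale one builds a continuous function of $x$ whose $\liminf$ recovers $h_f(x)$. For the converse, the wavelet-series construction you outline, with lacunary blocks of generations indexed by $n$ and coefficients of size $2^{-jH_n(x_\lambda)}$, is exactly Jaffard's approach; you have correctly isolated the genuine difficulty, namely preventing a large coefficient placed for a nearby point $y$ with small $H(y)$ from spoiling the lower bound $h_f(x)\ge H(x)$. In Jaffard's proof this is handled by a careful sparsification tied to the moduli of continuity of the approximating $H_n$, essentially as you describe. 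Your sketch is therefore a faithful high-level account of the known proof, though of course the calibration of sparsity against the $H_n$'s is where all the work lies and is not spelled out here.
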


Let us also mention that in \cite{AJT-prescription} the authors build a continuous nowhere differentiable stochastic process $(M_x)_{x\geq 0}$ whose  pointwise H\"older exponents have the most general form, i.e. the mapping $x\mapsto h_M(x) \in (0,1)$ can be any  liminf of a sequence of continuous functions.

\medskip

It is a natural question to investigate the same issues for local dimensions for measures. 

\begin{definition}
Let $\mathcal{M}(K)$ be the set of Borel probability measures on a Borel set $K\subset \R^d$.

For $\mu \in \mathcal{M}(\R^d)$, the support of $\mu$ is the set 
$$\supp(\mu)=\{x\in \R^d: \mu(B(x,r))>0 \mbox{ for every } r>0\}.$$

The (lower) local dimension of $\mu$ at $x\in\supp(\mu)$ is
\begin{equation}
\label{defhmu}
h_\mu(x) = \liminf_{r\to 0^+} \frac{\log \mu(B(x,r))}{\log r}
\end{equation}
and the singularity spectrum of $\mu$ is defined for $H\in \R\cup\{+\infty\}$ by 
$$D_\mu(H) = \dim E_\mu(H) \  \ \ \mbox{ where } E_\mu(H) = \{ x\in \suppmu : h_\mu(x) = H\}.$$
\end{definition}

It is common  (and in many situations, relevant and important) to look at points $x$ at which \eqref{defhmu}
turns  out to be a limit (and not only a liminf). Nevertheless, in this article   only   lower local dimensions  are considered (we will forget the term "lower" in the following), since we are interested in quantities defined for all $x\in \suppmu$.

\begin{definition}
A function $f$ (resp. a measure $\mu$) on $\R^d$ is called homogeneous (in short: HM) if the restriction of $f$ (resp. $\mu$) on any finite subcube $I\subset \R^d$  has the same singularity spectrum as $f$ (resp. $\mu$).

The same definition applies to a function or measure when $\R^d$ is replaced by $\zu^d$.
\end{definition}

One could expect that an analog of Theorem \ref{th_jaf1} should hold for local dimensions of measures. Unfortunately, the situation is not as clear, as proved by the next lemma  \cite{BuS3}.

\begin{lemma}
Let $\mu \in \mathcal{M}(\R^d)$ with a support   containing a cube $U\subset \R^d$. If the mapping $x\mapsto h_\mu(x)$ is continuous on  $U$, then $h_\mu$ is locally constant and equal to $d$ on $U$.
\end{lemma}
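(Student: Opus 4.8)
The plan is to prove the sharper statement that $h_\mu\equiv d$ on all of $U$; local constancy is then immediate. Suppose this fails, so that $h_\mu(x_0)\neq d$ for some $x_0\in U$. Since $h_\mu$ is continuous on $U$ (and nonnegative), one can find a closed cube $V\subset U$ with nonempty interior and a gap $\ep>0$ such that \emph{either} $h_\mu(x)\geq d+\ep$ for every $x\in V$, \emph{or} $h_\mu(x)\leq d-\ep$ for every $x\in V$ (if $h_\mu(x_0)=+\infty$, continuity still forces $h_\mu\geq d+1$ near $x_0$, so the dichotomy is exhaustive). I would then rule out each case, bearing in mind that since $\log r<0$ for small $r$, converting a bound on $\frac{\log\mu(B(x,r))}{\log r}$ into a bound on $\mu(B(x,r))$ reverses the inequality.

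\emph{Case $h_\mu\geq d+\ep$ on $V$.} This is excluded by the mass distribution principle (Billingsley's lemma) for lower bounds of Hausdorff dimension: one shows that for each $x\in V$ the condition $\liminf_{r\to0}\frac{\log\mu(B(x,r))}{\log r}\geq d+\ep$ provides a radius $r_0(x)>0$ with $\mu(B(x,r))\leq r^{d+\ep/2}$ for $0<r<r_0(x)$; stratifying $V=\bigcup_{n\geq 1}V_n$ with $V_n=\{x\in V:\ r_0(x)>1/n\}$ and using that $\mu(V)>0$ (because $V\subset U\subset\supp(\mu)$) so that $\mu(V_n)>0$ for some $n$, a direct estimate on covers by small sets yields $\mathcal H^{d+\ep/2}(V_n)\geq c\,\mu(V_n)>0$, hence $\dim_H V\geq d+\ep/2$, which is impossible for $V\subseteq\R^d$.

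\emph{Case $h_\mu\leq d-\ep$ on $V$.} Here the point is that a lower local dimension staying below $d$ on a set of positive Lebesgue measure forces $\mu$ to carry, at small scales, more mass than its (finite) total mass. Fix $\rho>0$. For each $x\in V$, since $\liminf_{r\to0}\frac{\log\mu(B(x,r))}{\log r}\leq d-\ep$, there is $r(x)\in(0,\rho)$ with $\mu(B(x,r(x)))\geq r(x)^{d-\ep/2}$, hence $r(x)^d\leq\rho^{\ep/2}\,\mu(B(x,r(x)))$. Applying the Vitali $5r$-covering lemma to $\{B(x,r(x))\}_{x\in V}$ produces a countable pairwise disjoint subfamily $\{B(x_i,r_i)\}_i$ with $V\subseteq\bigcup_i B(x_i,5r_i)$, whence
$$
\mathcal L^d(V)\ \leq\ c_d\,5^d\sum_i r_i^d\ \leq\ c_d\,5^d\,\rho^{\ep/2}\sum_i\mu(B(x_i,r_i))\ \leq\ c_d\,5^d\,\rho^{\ep/2},
$$
where $c_d=\mathcal L^d(B(0,1))$ and the last step uses disjointness together with $\mu(\R^d)=1$. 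Letting $\rho\to0$ forces $\mathcal L^d(V)=0$, contradicting that $V$ has nonempty interior.

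Both cases being excluded, $h_\mu\equiv d$ on $U$, which in particular is locally constant, proving the lemma. The only slightly delicate points are the reversal of inequalities coming from $\log r<0$ and the (routine) measurability bookkeeping in the stratification of the first case, which can be handled by choosing $r_0(x)$ in a Borel way or by passing to outer measures. I expect the genuine heart of the argument to be the covering estimate in the second case, together with the simple but decisive use of the finiteness of the total mass of $\mu$, which is precisely what prevents the lower local dimension from dropping below $d$ on a set of positive Lebesgue measure.
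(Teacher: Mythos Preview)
The paper does not actually supply a proof of this lemma; it merely states the result and attributes it to \cite{BuS3}. So there is no in-paper argument to compare against.

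That said, your proof is correct and is essentially the standard route one would expect. The dichotomy into the two cases is the natural one, and each is disposed of by a classical covering/density argument: Case~1 is the mass distribution principle (a set of positive $\mu$-measure on which $\mu(B(x,r))\le r^{s}$ for small $r$ must have Hausdorff dimension at least $s$, which is absurd for $s>d$), while Case~2 is the companion fact that a finite measure cannot have lower local dimension uniformly below $d$ on a set of positive Lebesgue measure --- your Vitali argument is exactly the usual proof of this. The measurability issue you flag in Case~1 is genuine but indeed routine: working with closed balls makes $x\mapsto\mu(\overline{B}(x,r))$ upper semicontinuous, so each $V_n$ is Borel and some $V_n$ carries positive $\mu$-mass. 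One minor stylistic remark: in Case~1 you could bypass the stratification entirely by invoking the well-known fact that $h_\mu(x)\le d$ for $\mu$-a.e.\ $x$ (which follows from the same mass-distribution idea), immediately contradicting $\mu(V)>0$; but your version is perfectly fine as written.
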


Last lemma leads to the two following open problems: What  are the admissible mappings $H:\R^d\to\R^+$ satisfying $H=h_\mu$ for some probability measure $\mu$? Given an admissible mapping $H$, can one explicitly build a measure $\mu \in\mathcal{M}(\R^d)$ such that $h_\mu=H$? 

\medskip

Even if all these  questions are mathematically   relevant and raise delicate questions (in geometric measure theory for instance), in many situations it is even more important  to construct functions with prescribed singularity spectrum. This is the case in particular when trying to model real-life data, for which essentially only global quantities (like the $L^q$-spectrum) are accessible.  

%%%%%%%%%%%%%%%%%%%%%%%%%%%%%%%%%%
%%%%%%%%%%%%%%%%%%%%%%%%%%%%%%%%%%
%%%%%%%%%%%%%%%%%%%%%%%%%%%%%%%%%%
%%%%%%%%%%%%%%%%%%%%%%%%%%%%%%%%%%
%%%%%%%%%%%%%%%%%%%%%%%%%%%%%%%%%%
%%%%%%%%%%%%%%%%%%%%%%%%%%%%%%%%%%
%%%%%%%%%%%%%%%%%%%%%%%%%%%%%%%%%%
\section{Prescription of multifractal behavior}

As expected, the prescription of singularity spectrum for functions or measures is more involved than that of    exponents.
Indeed, there is no obvious characterization for the admissible singularity spectrum for functions. Yet, using   wavelet techniques, S. Jaffard was able to prove the following theorem \cite{Jaff-prescription}. Let 
$$\mathcal{R} = \left \{\si : \R^+ \to [0,d]\cup\{-\infty\} :\begin{cases}  \exists \mbox{ bounded  interval $I\subset \R^+$ and $\alpha\in [0,d]$}\\ \mbox{ such that $ \si= \alpha{\bf 1\!\!\!1}_{I} + (-\infty){\bf 1\!\!\!1}_{\R^+\setminus I}$}\end{cases} \right\}.
$$

\begin{theorem}
\label{th-ps1}
 Let $\sigma:\R^+\to [0,d]\cup\{-\infty\}$ be the supremum of a countable sequence of functions $(\si_n)_{n\geq 1}\in \mathcal{R}$.
Then there exists a continuous function $f:\R^d\to \R$ such that $D_f=\sigma$.
\end{theorem}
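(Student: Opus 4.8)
The plan is to construct $f$ directly from its wavelet coefficients, following Jaffard's prescription philosophy. First I would reduce to the building block: for a single $\sigma_n = \alpha_n {\bf 1\!\!\!1}_{I_n} + (-\infty){\bf 1\!\!\!1}_{\R^+\setminus I_n}$ with $I_n$ a bounded interval, I would recall (or re-derive) that one can build a function $g_n$ whose singularity spectrum is exactly $\sigma_n$, and moreover $g_n$ has a controlled wavelet representation: at each dyadic scale $j$, only a prescribed sparse subset of wavelet coefficients is nonzero, with sizes of order $2^{-j h}$ for a carefully chosen range of exponents $h\in I_n$, arranged so that the set of points with H\"older exponent $H$ has Hausdorff dimension $\min(I_n \ni H)\mapsto \alpha_n$ and is empty off $I_n$. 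The classical tool here is a lacunary/selfsimilar-type wavelet series: placing coefficients $2^{-jh}$ on a well-chosen $(h,\alpha_n)$-dimensional Cantor-like family of dyadic cubes yields, via a standard wavelet characterization of the pointwise H\"older exponent (under a global uniform regularity assumption, e.g. $f\in C^{\log}$ or $C^\varepsilon$), exactly the spectrum $\sigma_n$.

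The second step is the superposition. Given the sequence $(g_n)$, I would form $f = \sum_{n\geq 1} \lambda_n g_n$ with rapidly decreasing positive weights $\lambda_n$ chosen so that the sum converges in $C^{\log}(\R^d)$ (hence $f$ is continuous, and the wavelet-leader characterization of $h_f$ applies). The key point is to arrange the supports of the nonzero wavelet coefficients of the different $g_n$'s at \emph{disjoint ranges of scales}, or more robustly, to make the scale-ranges used by $g_n$ so sparse and so widely separated that the wavelet coefficients of $f$ at a given cube $\lambda$ are dominated by a single $g_n$. Under such a separation, for every point $x_0$ one computes $h_f(x_0) = \inf_n\big(\text{contribution of }g_n\text{ at }x_0\big)$, and a Baire-category / diagonalization argument (or an explicit combinatorial arrangement of the Cantor sets) shows that the level sets satisfy $E_f(H) = \bigcup_n E_{g_n}(H)$ up to negligible sets, giving $D_f(H) = \sup_n D_{g_n}(H) = \sup_n \sigma_n(H) = \sigma(H)$. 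The countable stability of Hausdorff dimension is what converts the supremum of the $\sigma_n$ into the dimension of a countable union.

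The main obstacle, and the part requiring the most care, is precisely the interaction control in the superposition: ensuring that mixing the $g_n$'s does not \emph{create} new H\"older exponents (points where several contributions conspire to produce an exponent not in any $I_n$) nor \emph{destroy} the existing spectrum (a point that was in $E_{g_n}(H)$ acquiring a smaller exponent because of some $g_m$, $m\neq n$). This is handled by the scale-separation device together with the observation that the pointwise H\"older exponent of a wavelet series is governed by a $\liminf$ of the wavelet leaders: choosing the active scales of $g_n$ to form a set $J_n\subset\N$ with $J_n\cap J_m=\emptyset$ and $\bigcup_n J_n$ having density zero in a strong sense forces, for each $x_0$, the $\liminf$ over all scales to be realized along the single subsequence $J_{n(x_0)}$ where $x_0$ lies in the relevant Cantor set, while along all other scales the leaders of $f$ near $x_0$ are uniformly $O(2^{-j/|\log\text{(gap)}|})$-type large, i.e. do not lower the exponent. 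Making these bookkeeping estimates precise — uniform in $x_0$ and compatible with $f\in C^{\log}(\R^d)$ — is the technical heart; once it is in place, the identity $D_f=\sigma$ follows from countable stability of dimension as above.
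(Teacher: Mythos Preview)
The paper does not prove this theorem: it is a survey, and Theorem~\ref{th-ps1} is stated as a result of Jaffard \cite{Jaff-prescription} with no proof given. So there is no in-paper argument to compare against.

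That said, your sketch matches the strategy of Jaffard's original construction: build each $g_n$ as a lacunary wavelet series whose nonzero coefficients sit on a sparse, $\alpha_n$-dimensional family of dyadic cubes with amplitudes $2^{-jh}$ ranging over $h\in I_n$, then superpose with disjoint scale ranges and rapidly decaying weights so that at each scale only one $g_n$ is active. You correctly single out the interaction control as the crux, and the scale-separation device you describe is the right one.

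One point deserves sharper formulation than ``$E_f(H)=\bigcup_n E_{g_n}(H)$ up to negligible sets''. What actually has to be checked is threefold: (i) at a point $x_0$ lying in the Cantor family of exactly one $g_n$, the other $g_m$ contribute exponent $+\infty$ there (they are locally polynomial off their own Cantor sets), so $h_f(x_0)=h_{g_n}(x_0)\in I_n$; (ii) at a point in several Cantor families the exponent is the minimum, hence still in some $I_m$; (iii) at a point in \emph{no} Cantor family, the exponent of $f$ is $+\infty$. Item (iii) is where the sparsity of $\bigcup_n J_n$ and the decay of the $\lambda_n$ genuinely matter: the active scales must thin out fast enough that the $\liminf$ in the wavelet-leader characterization is $+\infty$ at such points. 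You acknowledge this as the ``technical heart'', which is accurate; the approach itself is sound.
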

 Although probably not optimal, this theorem already covers a  large class of  singularity spectra, certainly sufficient to mimic precisely all the singularity spectra that can be estimated on real data.
 
 In particular, any concave mapping $\sigma :\R^+\to [0,d]\cup\{-\infty\}$ can be written as $\sup_{n\in \N} \si_n$ for some well chosen functions $\si_n\in \mathcal{R}$, hence it is possible to build   a  function $f:\R^d\to \R$ such that $D_f=\sigma$.   
 
 \medskip
 
 The same questions were addressed for measures first in \cite{BuS3} and then in \cite{Barralinverse}.  The admissible singularity spectra for measures are not characterized either, but when compared to spectra of functions,  there are additional constraints: if $d_\mu=\sigma$ for some $\mu \in \mathcal{M}(\R^d)$, then $\sigma(h)\leq \min(h,d)$ (see \cite{BroMichPey,Olsen}).

 Another surprising  constraint obtained in \cite{BuS3} is that the support of the singularity spectrum of a 1-dimensional HM measure contains  an interval. We call $\supp(\si)$ the support of a function $\si:\R^d\to \R$, and by abuse of notation, if $\si:\R\to \R^+\cup\{-\infty\} $, $\supp(\sigma)=\{ H:\si(H)\geq 0\}$.
 
  \begin{proposition}
 For any non-atomic HM probability measure $\mu \in\mathcal{M}(\R)$, Supp($D_\mu) \cap [0,1]$ is necessarily an interval of the form $[\alpha, 1]$, where $0\leq \alpha\leq 1$.
 \end{proposition}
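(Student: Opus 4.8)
\emph{Proof plan.}

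\textbf{Reductions.} I would first restrict $\mu$ to $\zu$ and renormalise it to a probability measure; by homogeneity this affects neither the singularity spectrum nor the homogeneity property, so we may assume $\mu\in\mathcal{M}(\zu)$ is homogeneous. A first consequence of homogeneity is that $\mu(I)>0$ for every subinterval $I$ with non-empty interior (otherwise $\mu|_I\equiv 0$ has empty spectrum, contradicting $D_{\mu|_I}=D_\mu$), hence $\suppmu=\zu$. I would then work with the dyadic version of the local dimension: writing $I_n(x)$ for the generation-$n$ dyadic interval containing $x$, $\phi_x(n):=-\log_2\mu(I_n(x))$ and $\widetilde{h}_\mu(x):=\liminf_{n}\phi_x(n)/n$, one has that $\widetilde{h}_\mu$ produces the same singularity spectrum $D_\mu$, and the same is true after restricting $\mu$ to any subinterval (the comparison between Euclidean balls and dyadic cubes is classical). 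Two elementary facts will be used repeatedly: $n\mapsto\phi_x(n)$ is non-decreasing; and $\phi_x(n{+}1)-\phi_x(n)\ge 1$ whenever $I_{n+1}(x)$ is the lighter of the two children of $I_n(x)$ --- which always carries positive mass since $\suppmu=\zu$. Finally set $\alpha:=\inf_{x\in\zu}\widetilde{h}_\mu(x)$; then $0\le\alpha\le 1$, the upper bound coming from the ``heaviest child'' descent, which yields a point $x_0$ with $\mu(I_n(x_0))\ge 2^{-n}$, i.e.\ $\phi_{x_0}(n)/n\le 1$.

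\textbf{The exponent $\alpha$ is attained in every subinterval.} I would build a point $x$ entering a nested sequence of ``super-heavy'' dyadic cubes: given $I_{N_j}(x)$, homogeneity applied to $\mu$ restricted to that cube (whose infimal local dimension is again $\alpha$) provides $z\in I_{N_j}(x)$ with $\widetilde{h}_\mu(z)<\alpha+2^{-j}$, hence a generation $N_{j+1}>N_j$ with $\mu(I_{N_{j+1}}(z))>2^{-(\alpha+2^{-j})N_{j+1}}$; put $I_{N_{j+1}}(x):=I_{N_{j+1}}(z)$. Then $\phi_x(N_j)/N_j<\alpha+2^{-j}$ for all $j$, so $\widetilde{h}_\mu(x)=\alpha$ and $\alpha\in\supp(D_\mu)$; by homogeneity, \emph{every} subinterval then contains points of local dimension exactly $\alpha$. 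If $\alpha=1$ we are done at once: every local dimension is $\ge 1$, while $\widetilde{h}_\mu(x_0)\le 1$ forces $\widetilde{h}_\mu(x_0)=1$, so $\supp(D_\mu)\cap\zu=\{1\}=[\alpha,1]$. Assume henceforth $\alpha<1$.

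\textbf{Filling $(\alpha,1]$: the crux.} Fix $\beta\in(\alpha,1]$; I would construct the dyadic address of a point $y$ with $\widetilde{h}_\mu(y)=\beta$ by alternating two moves. \emph{Up-move} (from a generation $m$ with $\phi_y(m)>\beta m-\beta$): follow the lighter child at each step, so $\phi_y(m{+}t)\ge\phi_y(m)+t$ and
\[
\frac{\phi_y(m{+}t)}{m+t}\ \ge\ \frac{\phi_y(m)+t}{m+t}\ \ge\ \beta-\frac{\beta}{m}\qquad(t\ge 0),
\]
the right-hand bound increasing with $t$ and exceeding $\tfrac{1+\beta}{2}$ after $O(m)$ steps; stop at the first generation $m'$ where the quotient exceeds $\tfrac{1+\beta}{2}$ (for $\beta=1$ a single step suffices, the quotient then remaining $\ge 1-1/m$). \emph{Down-move} (from $m'$ with $\phi_y(m')>\beta m'$): pick $z\in I_{m'}(y)$ with $\widetilde{h}_\mu(z)=\alpha<\beta$, and follow the address of $z$ until the first generation $M>m'$ with $\phi_z(M)\le\beta M$; since $\phi_z$ is non-decreasing, $\phi_z(M)\ge\phi_z(M{-}1)>\beta(M{-}1)$, whence
\[
\beta-\frac{\beta}{M}\ <\ \frac{\phi_z(M)}{M}\ \le\ \beta ,
\]
while $\phi_z(n)/n>\beta$ for $m'\le n<M$; stop at $M$. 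Interleaving the two moves (starting with an up-move from $\zu$) produces stopping times increasing to $\infty$; along the whole construction the quotient $\phi_y(n)/n$ stays $\ge\beta-\beta/T(n)$, where $T(n)$ is the last stopping time before $n$, and it lies in $(\beta-\beta/M,\beta]$ at the bottom (generation $M$) of each down-move. Letting the stopping times tend to infinity gives $\liminf_n\phi_y(n)/n=\beta$, i.e.\ $\widetilde{h}_\mu(y)=\beta$, so $\beta\in\supp(D_\mu)$.

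\textbf{Conclusion and main obstacle.} Altogether $[\alpha,1]\subseteq\supp(D_\mu)$, while $\supp(D_\mu)\cap\zu\subseteq[\alpha,1]$ is immediate from the definition of $\alpha$; hence $\supp(D_\mu)\cap\zu=[\alpha,1]$. The delicate point is the zig-zag construction, and specifically the control of the quotient during the up-moves: a naive ``heaviest descendant'' strategy would let $\phi_y(n)$ grow sublinearly and push the quotient down to $0$, whereas choosing the \emph{lightest} child together with the monotonicity of $\phi_y$ keeps the quotient trapped just above $\beta$ throughout each block, and it is exactly this trapping that pins the $\liminf$ to the prescribed value $\beta$. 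A secondary, routine technicality is the equivalence between the dyadic and Euclidean local dimensions used above; it affects only a set of points of zero Hausdorff dimension, hence does not change which exponents belong to the support (alternatively one also controls the mass of neighbouring dyadic cubes along the construction).
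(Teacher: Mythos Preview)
The paper is a survey and does not include a proof of this proposition; the result is quoted from \cite{BuS3}. I therefore assess your argument on its own merits.

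Your overall plan is the right one: the zig--zag construction alternating ``down-moves'' towards points of minimal exponent with ``up-moves'' along light children, together with the homogeneity property to relocate extremal points inside every subcube, is precisely the mechanism that produces all intermediate exponents, and your stopping-time bookkeeping for the quotient $\phi_y(n)/n$ is correct.

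There is, however, a genuine gap in the passage between the dyadic exponent $\widetilde h_\mu$ and the Euclidean one $h_\mu$, and it infects several places simultaneously:
\begin{itemize}
\item With $\alpha:=\inf_x\widetilde h_\mu(x)$, the inclusion $\supp(D_\mu)\cap\zu\subset[\alpha,1]$ is \emph{not} immediate: since $h_\mu\le\widetilde h_\mu$, a point with $h_\mu(x)<\alpha\le\widetilde h_\mu(x)$ is not ruled out.
\item Conversely, if you set $\alpha:=\inf_x h_\mu(x)$, then HM only supplies, in each cube, a point $z$ with $h_\mu(z)$ near $\alpha$; this does not force $\widetilde h_\mu(z)$ to be small, so ``following the address of $z$'' in the down-move need not make $\phi_z(n)/n$ drop below $\beta$.
\item Your construction yields $\widetilde h_\mu(y)=\beta$, hence only $h_\mu(y)\le\beta$. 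During an up-move the lighter-child rule controls $\mu(I_{n+1}(y))$ but says nothing about the neighbour of $I_{n+1}(y)$ lying \emph{outside} $I_n(y)$; along a subsequence this neighbour may carry mass $\gg 2^{-\beta n}$, forcing $h_\mu(y)<\beta$.
\item The assertion that $\{x:\widetilde h_\mu(x)\neq h_\mu(x)\}$ has Hausdorff dimension~$0$ is false for general non-doubling measures; and even if it held, it would not help here, since membership in $\supp(D_\mu)$ is decided by the existence of a \emph{single} point with a given exponent, which a zero-dimensional exceptional set can perfectly well contain.
\end{itemize}

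Your parenthetical remedy---controlling the mass of neighbouring dyadic cubes along the construction---is the correct one and is how the argument is made rigorous: at the start of each block one inserts a short buffer so that the working cube sits well inside a cube of already-controlled mass (equivalently, one runs the whole construction with $3I_n$ in place of $I_n$, picking at each down-step the heaviest of the three candidates and at each up-step moving to an interior grandchild). This is routine but has to be carried out; as written, the proof is incomplete at exactly this point.
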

 This proposition leads to the following notation: for $\sigma:\R^+\to [0,1]\cup\{-\infty\}$,  consider the mapping  
 $$\sigma^{\dagger}(H) =\max \big(\sigma(H), 0\cdot {\bf1\!\!\!1}_{[\inf(\mbox{\tiny \supp}(\sigma)),\sup(\mbox{\tiny \supp}(\sigma))]}(H)\big).$$
  Essentially, $\sigma^{\dagger}$ fills the gaps in the support of $\sigma$ by replacing the value $-\infty$ by $0$.

 \medskip
 
 The result concerning the prescription of singularity spectrum of measures obtained in \cite{BuS3}  is the following.

\begin{theorem}
\label{th-ps2}
 Let $\sigma:\R^+\to [0,1]\cup\{-\infty\}$ be the supremum of a countable sequence of functions $(\si _n)_{n\geq 1}\in \mathcal{R}$ satisfying in addition that for every $n\geq 1$,  calling $I_n$ the interval on which $\si_n$ is not $-\infty$,
 \begin{itemize}
 \item
 $I_n\subset [0,1]$,
 \item
  $I_n$ is closed,
 \item
 $\si_n(x)\leq x$ for $x\in I_n$.
 \end{itemize}
Then:\begin{enumerate}
\item
There exists $\mu \in \mathcal{M}(\R )$ such that $D_\mu=\sigma$.

\item There exists a HM measure  $\mu \in \mathcal{M}(\R)$ with support equal to $\zu$  such that $D_\mu= \sigma^{\dagger}$, and $D_\mu(1)=1$.
\end{enumerate}
\end{theorem}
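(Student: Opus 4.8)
\emph{Overall strategy.} The plan is to isolate a family of \emph{building-block} measures, one for each $\si_n$, realizing the single-piece spectrum $\si_n=\alpha_n{\bf 1\!\!\!1}_{I_n}+(-\infty){\bf 1\!\!\!1}_{\R^+\setminus I_n}$, and then to assemble them: \emph{disjointly} for part~(1), and \emph{densely and self-similarly} for part~(2). Write $I_n=[a_n,b_n]\subset\zu$ (allowing $a_n=b_n$); the hypothesis $\si_n(x)\le x$ on $I_n$ means exactly $\alpha_n\le a_n$, which is the constraint that makes a \emph{measure} — as opposed to a function — with such a spectrum possible.

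\emph{Building blocks.} For each $n$ I would construct a compactly supported probability measure $m_n$ with $D_{m_n}=\si_n$, that is: $h_{m_n}(x)\in I_n$ for \emph{every} $x\in\supp(m_n)$, and $\dim E_{m_n}(H)=\alpha_n$ for every $H\in I_n$. Note that $\supp(m_n)$ must itself have dimension $\ge b_n$ by the mass distribution principle (since $h_{m_n}\ge a_n$ there), so when $\alpha_n<a_n$ the support is genuinely an uncountable union of dimension-$\alpha_n$ level sets. Such $m_n$ can be produced by a Moran-type / heterogeneous-ubiquity construction: take a Cantor carrier behaving, at most dyadic scales, like a self-similar measure of dimension $b_n$, and redistribute mass along a very lacunary subsequence of scales, the amount of redistribution coded along each branch prescribing the target exponent $H(x)\in[a_n,b_n]$ attained in the liminf; the branches coding a fixed $H$ form a sub-Moran set of dimension $\alpha_n$. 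This is essentially the construction of \cite{BuS3}. I expect the simultaneous upper and lower bounds on $\dim E_{m_n}(H)$ — holding for all $H\in I_n$ at once — to be the main technical obstacle of the whole theorem.

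\emph{Part (1).} Assuming $\si\not\equiv-\infty$ (the other case being vacuous since $\supp\mu\neq\emptyset$), pick $H^\star\in\supp(\si)=\bigcup_nI_n$. Place affine copies $\mu_n$ of $m_n$ on pairwise disjoint closed intervals $J_n\subset\R$ accumulating at a single point $x_\infty$, with $\mathrm{dist}(J_n,x_\infty)\asymp 2^{-n}$, $|J_n|=o(2^{-n})$, and weights $w_n>0$, $\sum_n w_n=1$ tuned so that $\mu:=\sum_n w_n\mu_n$ has $h_\mu(x_\infty)=H^\star$ (balance the decay of $w_n$ against $2^{-n}$). For $x$ interior to some $J_{n_0}$, a neighbourhood of $x$ carries only $w_{n_0}\mu_{n_0}$, so $h_\mu(x)=h_{\mu_{n_0}}(x)$; the remaining relevant points are $x_\infty$ and the countably many endpoints of the $J_n$, where the exponent still lies in $\supp(\si)$. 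Hence $E_\mu(H)=\bigsqcup_n E_{\mu_n}(H)$ up to a countable set, and $D_\mu(H)=\sup_n D_{m_n}(H)=\sup_n\si_n(H)=\si(H)$, the countable correction contributing dimension $0\le\si(H)$ whenever $H\in\supp(\si)$.

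\emph{Part (2) and the main obstacle.} Here $\supp\mu$ must be all of $\zu$, so disjoint placement is forbidden. Instead I would build $\mu$ hierarchically on the dyadic tree of $\zu$: inside \emph{every} dyadic cube reserve a countable collection of deeper sub-cubes hosting rescaled, renormalized copies of the patterns of $m_1,m_2,\dots$, laid out in a repeating self-similar scheme so that each $m_n$ reappears within every subcube — this forces homogeneity. Superpose a background measure comparable to Lebesgue on $\zu$; then $\supp\mu=\zu$, and a Fubini/Borel–Cantelli argument gives $h_\mu(x)=1$ for Lebesgue-a.e.\ $x$ (using $\mu\gtrsim\mathrm{Leb}$ for the reverse inequality), whence $D_\mu(1)=1$. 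For $H\in\supp(\si)$ the densely scattered copies of $m_n$ with $H\in I_n$ give $\dim E_\mu(H)\ge\alpha_n$, hence $\ge\si(H)$; for $H$ in an interior gap of $\supp(\si)$, the equality $\supp\mu=\zu$ forces, by an intermediate-value argument down the hierarchy, at least one point of exponent $H$, so $D_\mu(H)\ge 0$. The hard part is the matching upper bound $D_\mu(H)\le\si^{\dagger}(H)$ for all $H$ — in particular $\dim E_\mu(H)\le 0$ \emph{exactly} on the gaps and $\le\alpha_n$ on $I_n$: this requires a single global covering estimate bounding, at scale $2^{-k}$, the number of dyadic cubes that can meet $E_\mu(H)$ by $2^{k(\si^{\dagger}(H)+\ep)}$, which forces the combinatorics of the mass assignment to be engineered by hand. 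Since $\si^{\dagger}$ need not be concave, no $L^q$-spectrum estimate can suffice, and this covering argument — together with the analogous control inside each building block — is the crux.
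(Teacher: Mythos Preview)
The survey paper does not prove this theorem; it is quoted from \cite{BuS3} and stated without argument, so there is no ``paper's own proof'' to compare against directly. That said, your outline is precisely the architecture of the construction in \cite{BuS3}: building-block measures $m_n$ realising each step $\si_n$, disjoint juxtaposition on separated intervals for part~(i), and a dense, hierarchically self-similar implantation together with a Lebesgue-like background for part~(ii), with the global covering upper bound $D_\mu(H)\le\si^{\dagger}(H)$ correctly identified as the technical crux (and correctly noted to be out of reach of any $L^q$-argument since $\si^{\dagger}$ need not be concave).

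One small slip: from $h_{m_n}\ge a_n$ on $\supp(m_n)$ the mass distribution principle yields $\dim\supp(m_n)\ge a_n$, not $\ge b_n$; in fact $\sup_x h_\mu(x)$ can exceed $\dim\supp(\mu)$ in general, so no such lower bound by $b_n$ is automatic. This does not affect the rest of your argument.
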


Observe that although the class of singularity spectra obtained here is quite large, only local dimensions less than $1$ are dealt with, and only the   one-dimensional case is covered.
 
 Theorem \ref{th-ps2} is completed by the result by Barral \cite{Barralinverse}.
\begin{theorem}
\label{th-ps3}
 Let $\sigma:\R^+\to [0,d]\cup\{-\infty\}$ be an upper semi-continuous function with support included in  $[\alpha,\beta]$ for some $0<\alpha<\beta<+\infty$, satisfying $\sigma(h)\leq h $ for every $h\in [\alpha,\beta]$, and  such that $\sigma(h)=h$ for some $h$. Then there exists $\mu \in \mathcal{M}(\R^d )$ such that $D_\mu=\sigma$.
\end{theorem}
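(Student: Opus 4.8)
I would build $\mu$ directly, by a single iterated Moran-type scheme on $\zu^d$ (a translation–dilation then gives the statement on $\R^d$). It is worth first recording the role of the three hypotheses. The assumption $\supp(\sigma)\subset[\alpha,\beta]$ with $\alpha>0$ forces $h_\mu(x)\ge\alpha$ at every $x\in\suppmu$, i.e. an essentially $\alpha$-dimensional upper bound $\mu(B(x,r))\le C_{x,\ep}\,r^{\alpha-\ep}$ everywhere; the bound $\beta<\infty$ forbids anomalously thin points; and the condition ``$\sigma(h)=h$ for some $h$'' is in fact \emph{necessary}: if $m:=\dim\mu=\inf\{\dim E:\mu(E)>0\}$, then $m$ equals the essential infimum of $h_\mu$ by Billingsley's lemma, and a further Billingsley-type argument using the upper semi-continuity of $\sigma$ gives $\sigma(m)=D_\mu(m)\ge m$, hence $\sigma(m)=m$. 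So I would fix once and for all $h_0:=m\in[\alpha,\beta]$ with $\sigma(h_0)=h_0$; this will be the dimension of the measure to be constructed.

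The backbone of $\mu$ is a homogeneous Cantor (self-similar) measure $\mu_0$ on a subset of $\zu^d$ of dimension $h_0\le d$, with $h_{\mu_0}\equiv h_0$ on its support, which already produces the diagonal point $D_\mu(h_0)\ge h_0$. The rest of the spectrum is grafted onto $\mu_0$ by perturbations. Since $\sigma$ is upper semi-continuous on the compact interval $[\alpha,\beta]$, every super-level set $\{\sigma\ge s\}$ is compact, so $\sigma$ is the supremum of a countable family of ``elementary'' functions, each supported on a finite union of closed subintervals of $[\alpha,\beta]$ and bounded below there; I would process these pieces one at a time. At stage $k$ I reserve a family of pairwise disjoint dyadic sub-cubes that is \emph{lacunary in scale} — the generations at which one refines grow extremely fast, and within them only a sparse sub-collection of cubes is used — and inside these cubes I redistribute the mass so as to create, along a Cantor set of branches of the prescribed Hausdorff dimension, the exponent targeted by the $k$-th piece, taking care that all the exponents so produced lie in $[\alpha,\beta]$ and that the exponents fixed at earlier stages are not disturbed. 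Lacunarity ensures that $\mu$-almost every point still has exponent $h_0$, so that these perturbations only affect sets of small, controlled dimension.

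For the \textbf{lower bound} $D_\mu(h)\ge\sigma(h)$ I would fix $h\in\supp(\sigma)$ and $s<\sigma(h)$, collect the branches created at the stages whose target approximates $(h,s)$, and extract from them a Cantor-type subset of $\zu^d$ of Hausdorff dimension $\ge s$ on which $h_\mu\equiv h$; a mass-distribution (Billingsley–Frostman) argument on this subset gives the dimension estimate, after which one lets $s\uparrow\sigma(h)$. The hard part is the \textbf{upper bound} $D_\mu(h)\le\sigma(h)$ for all $h$ (including $D_\mu\equiv-\infty$ off $\supp(\sigma)$): one must show that every $x\in\suppmu$ either keeps the backbone exponent $h_0$, or lies in one of the perturbed families and then carries an exponent governed by the corresponding piece of $\sigma$, or is a transition point living in a set of Hausdorff dimension $0$ — and in all cases $h_\mu(x)\in[\alpha,\beta]$ and no $E_\mu(h)$ exceeds dimension $\sigma(h)$. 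The crux, and the step I expect to be the main obstacle, is to calibrate the lacunarity and the weights of the successive redistributions so that realizing the spectrum at height $s$ over a portion of $[\alpha,\beta]$ costs \emph{exactly} an $s$-dimensional family of branches and no more; this forces a delicate scale-by-scale covering estimate of the $\ep$-suboptimal points at each relevant generation. Note finally that the non-concavity allowed for $\sigma$ is precisely what prevents a single self-similar or Gibbs measure from working and makes the iterated, piecewise construction unavoidable.
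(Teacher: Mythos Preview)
The paper does not contain a proof of this theorem: it is quoted as a result of Barral \cite{Barralinverse} and stated without argument in this survey, so there is no ``paper's own proof'' to compare your sketch against.

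That said, your outline is a reasonable blueprint and is in the spirit of how such prescription results are obtained: a backbone homogeneous Cantor measure of dimension $h_0$ (the point where $\sigma(h_0)=h_0$), onto which one grafts, via lacunary redistributions of mass, countably many perturbations realizing the remaining pieces of $\sigma$, with a Frostman/mass-distribution argument for the lower bound and a scale-by-scale covering argument for the upper bound. Two small points. First, upper semi-continuity of $\sigma$ guarantees that $\{\sigma\ge s\}$ is closed, not that it is a \emph{finite union of intervals}; the correct decomposition is $\sigma=\sup_{s\in\Q}\, s\cdot\mathbf{1}_{\{\sigma\ge s\}}$ with closed (possibly complicated) supports, which still suffices for the construction. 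Second, your necessity remark (``$\sigma(h)=h$ for some $h$'') is true but the one-line Billingsley justification you give is too quick: from $m=\mathrm{ess\,inf}\,h_\mu$ one gets $\dim\{h_\mu\le m+\ep\}\ge m$ for every $\ep>0$, and passing from this to $D_\mu(h)=h$ for a \emph{single} $h$ requires an additional compactness/USC argument. None of this affects the validity of the overall strategy, which at this level of detail is sound; as you rightly flag, the genuine difficulty is the upper bound $D_\mu(h)\le\sigma(h)$, and your sketch does not (and at this length cannot) resolve it.
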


 In the last theorem, Barral was also able to build measures that were "homogeneous" in the sense that the restriction of $\mu$ to any bounded cube $I\subset \R^d$ such that  $\mu(I)\neq 0$ has the same singularity spectrum as $\mu$ itself. A comparison between Theorems \ref{th-ps2} and \ref{th-ps3} yields that (at least) in dimension 1, the measures constructed by Barral are necessarily not supported by a full  interval (their support is a Cantor-like set), otherwise $\sigma$ should be replaced by $\sigma^{\dagger}$.
 
 \medskip

Theorems \ref{th-ps1}, \ref{th-ps2} and \ref{th-ps3} are not entirely satisfying. Indeed, 
\begin{itemize}
\item
 the construction  used in Theorem \ref{th-ps1} does not guarantee that the corresponding spectrum is homogeneous. Homogeneous spectra are yet very common (for instance, trajectories of stationary processes usually exhibit homogeneous spectra).
 \item
in the three previous theorems, even  if the prescribed spectrum is concave, the corresponding function  or measure {\em a priori}  does not satisfy a multifractal formalism.
\item
the functions and measures built along the proofs of Theorems \ref{th-ps1} and \ref{th-ps2} are  not "typical" in any sense, and may essentially appear, from the modeling standpoint, as mathematical extreme toy examples.
 
 \end{itemize}
 
 These issues will be addressed in the next sections.

%%%%%%%%%%%%%%%%%%%%%%%%%%%%%%%%%%
%%%%%%%%%%%%%%%%%%%%%%%%%%%%%%%%%%
%%%%%%%%%%%%%%%%%%%%%%%%%%%%%%%%%%
%%%%%%%%%%%%%%%%%%%%%%%%%%%%%%%%%%
%%%%%%%%%%%%%%%%%%%%%%%%%%%%%%%%%%
%%%%%%%%%%%%%%%%%%%%%%%%%%%%%%%%%%
%%%%%%%%%%%%%%%%%%%%%%%%%%%%%%%%%%
\section{Prescription of multifractal formalisms}
\label{sec-fp}

  Let us very quickly recall the intuition by  Frisch \& Parisi \cite{FrischParisi}, who studied the velocity $v$ of a turbulent fluid in a bounded domain $\Omega\subset \R^3$. More precisely, inspired by the seminal works by Kolmogorov on turbulent fluids and the study of the local fluctuations of their velocity, Frisch  and  Parisi  were interested in the 
 moments of the increments of   $v$ defined by
 \begin{equation}
 \label{defzeta}
 \mbox{for every $q\in \R$, } \ \ S_v(q,l) = \int_{\Omega} | v(x+l)-v(x) |^q dx.
 \end{equation} 
For real data, $q$ being fixed, it has been observed  that   when  $| l | $ becomes small, $S_v(q,l) $ obeys a scaling law:
$$S_v(q,l) \sim |l |^{\zeta_v (q)} \ \ \mbox{for some exponent $\zeta_v (q)\in\R$.}$$

The mapping   {$q\mapsto \zeta_v (q)$} is called the   {scaling function} of the velocity of the fluid. It can be  seen that if $v$ is modeled at small scales by a fractional Brownian motion of index $H_0$  (as did Kolmogorov for instance), then $\zeta_v(q)$ is linear with  slope $H_0$. However, in the 1980's, numerical experiments for the velocity show that  $\zeta_v(q)$ is   {non-}linear and   {concave.} The seminal idea by Frisch and Parisi consists in  interpreting this non-linearity in terms of multifractality of $v$, via  the 
  following heuristic argument. 
  
  Replacing   Hausdorff by   box dimension, and making all kind of rough approximations (i.e. assuming that limits exist, etc),  for all points $x\in\R^3$ at which $h_v(x)= H $, one has
$| v(x+l)-v(x) | \sim |l|^ H $ for small $l$.  Since $\dim  E_v( H ) = D_v( H )$, there should exist approximately $|l|^{-D_v( H )}$ cubes of size $l$ in the domain $\Omega$ containing points  $x$  which are singularities of order $H$ for the velocity $v$.  All these intuitions lead to the estimates
$$S(q,l) = \int_{\Omega} | v(x+l)-v(x) |^q dx  \sim \sum_H   | l |^{q H } |l|^{-D_v( H )}|l|^3    \sim 
\sum_H   | l |^{q H -D_v( H )+3}  .  $$
 
When $|l|\to 0$, the greatest  contribution is obtained for the smallest exponent:
$$\zeta_v(q)=\inf_H   \ (q H -D_v( H )+3).$$
 The corresponding mapping $q\mapsto \zeta_v(q)$ is called the $L^q$-spectrum or the scaling function of $v$ -  soon we will  see more relevant formulas for $\zeta_v (q)$ and how to define it for measures.
 
By inverse Legendre transform, one deduces that
$$ {D_v( H )= \inf_{q\in\R}  \ (q H -\zeta_v(q)+3)}$$
which justifies that   $D_v$ has a concave shape.

\mk

It is striking that despite   the series of crude approximations, this intuition has proved to hold true in many (if not most of) situations, after some renormalization and suitable choices for the scaling functions.

 \begin{definition}
 \label{def-formalism}
 We call  {multifractal formalism} any formula relating the singularity spectrum of a function (or a measure) to a scaling function via a Legendre transform. 
 \end{definition}

For almost 30 years now, many efforts  have been  made   to prove the validity of  multifractal formalism(s) in various functional spaces, for many mathematical objects  (self-similar or self-affine functions and measures) including  random processes (Mandelbrot cascades, Gaussian multiplicative chaos, L\'evy processes). This line of research was constantly  followed and fostered by applications which gave mathematicians lots of signals and physical phenomena to study and work on, see Figures \ref{fig_examples} and \ref{fig_examples2}. In particular, stable algorithms to estimate $L^q$-spectra of data have been developed, furnishing to the scientific community many robustly analyzed sets of data \cite{AJW-1}. 
 
 \mk
 
A remaining question though lies in the existence of a functional setting in which a given multifractal behavior would be "generic". This is known after \cite{JAFF_FRISCH} as the {\em Frisch-Parisi conjecture}, which can   be formulated as follows:

\begin{conjecture}
\label{conjfp}
Given any admissible concave mapping $\sigma:\R^+\to [0,d]\cup\{-\infty\}$, is there a functional space in which typical functions have $\sigma$ as singularity spectrum and satisfy a \ml formalism?
\end{conjecture}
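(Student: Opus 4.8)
The plan is to \emph{manufacture} the function space. Fix once and for all a smooth compactly supported wavelet basis indexed by the dyadic cubes $\lambda\in\Lambda$, and work entirely on the wavelet side. Given an admissible concave $\sigma:\R^+\to[0,d]\cup\{-\infty\}$, let $\tau$ be (up to the usual affine normalisation) the Legendre transform conjugate of $\sigma$; since $\sigma$ is concave and admissible, $\tau$ is a well-defined concave, nondecreasing function, and it is exactly the \emph{candidate wavelet-leader $L^q$-spectrum} for which $\sigma=\tau^{*}$. The point is that $\tau$ can be encoded as a countable envelope of linear constraints on wavelet coefficients: choose a sequence $(s_n,1/p_n)$ running along a dense subset of the graph of $\tau$ and set
$$E_\sigma=\bigcap_{n\ge1}B^{s_n}_{p_n,\infty},$$
a (homogeneous) Besov-type intersection space — equivalently one may use an $S^\nu$-type space attached to the profile carrying the same information. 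Metrised by $\sum_n2^{-n}\min(1,\|\cdot\|_n)$, the space $E_\sigma$ is a complete metrizable topological vector space, hence a Baire space in which prevalence also makes sense. Note that Theorem \ref{th-ps1} already gives \emph{one} function with spectrum $\sigma$; the new content is that $\sigma$ is \emph{generic} in $E_\sigma$ and comes together with a formalism.

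\medskip

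\noindent\emph{Universal upper bound.} For every $f\in E_\sigma$, membership in this Besov family controls the wavelet leaders $L_\lambda(f)$: counting, at generation $j$, the dyadic cubes with $L_\lambda(f)\ge2^{-jH}$ yields an economical cover of each level set $E_f(H)$, so that $\dim E_f(H)\le\sigma(H)$ for all $H$, and dually the wavelet-leader scaling function satisfies $\eta_f(q)\ge\tau(q)$ for all $q$. Hence the easy half of the multifractal formalism, $D_f(H)\le\inf_q(qH-\tau(q)+d)=\sigma(H)$, holds for \emph{every} $f\in E_\sigma$. This is bookkeeping once the space has been designed to make the bound sharp.

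\medskip

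\noindent\emph{Generic lower bound and formalism.} It remains to prove that for Baire-typical $f\in E_\sigma$ one has $D_f(H)\ge\sigma(H)$ for every $H\in\supp(\sigma)$ (and $\eta_f(q)\le\tau(q)$). For each rational $H_0\in\supp(\sigma)$ and each integer $k\ge1$, I would build a single $g_{H_0,k}\in E_\sigma$ of arbitrarily small norm whose localised, saturating wavelet coefficients, arranged on a $\limsup$ Cantor-like set of points around the dyadic rationals, force H\"older exponent close to $H_0$ on a set of Hausdorff dimension at least $\sigma(H_0)-1/k$, the dimension lower bound coming from a ubiquity / mass-transference argument applied to the balls carrying those coefficients. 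The set $\mathcal{O}_{H_0,k}$ of $f\in E_\sigma$ exhibiting such a large ``almost-$H_0$'' singularity set is dense (add $g_{H_0,k}$ to any $f$) and can be taken open (controlling wavelet leaders vary continuously and the dimension estimate is robust under small perturbations). Then $G=\bigcap_{H_0\in\Q\cap\supp(\sigma)}\bigcap_{k\ge1}\mathcal{O}_{H_0,k}$ is a dense $G_\delta$; for $f\in G$ one gets $D_f(H_0)\ge\sigma(H_0)$ along a dense subset of $\supp(\sigma)$, and upper semicontinuity of the concave $\sigma$ together with a standard ``limit of prescribed singularities'' argument upgrades this to every $H\in\supp(\sigma)$. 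Combined with the universal upper bound this gives $D_f=\sigma$, and since both Legendre inequalities now hold, $f$ satisfies the wavelet-leader multifractal formalism; continuity of the typical $f$ is arranged by keeping $s_n>d/p_n$ (or adding one embedding) when $\sigma$ starts at a positive exponent.

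\medskip

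\noindent\emph{Main obstacle.} The delicate step is the \emph{simultaneous} realisation, by one generic $f$, of singularity sets of the exactly prescribed Hausdorff dimension for a whole continuum of exponents: the Baire scheme only delivers one rational $H_0$ at a time, so the perturbations $g_{H_0,k}$ must live on disjoint scale ranges and dyadic sub-families that do not interfere — neither inflating the exponent where it should be small nor deflating it where it should be large — and one must then control the exponent of the limiting function at the accumulation points. Making this compatible with the rigid scale-by-scale constraints defining $E_\sigma$, so that inserting $g_{H_0,k}$ keeps $f$ in the space and does not spoil the universal upper bound, is precisely where ubiquitous systems, homogeneous Cantor constructions and the exact way $\tau$ is written into the $B^{s_n}_{p_n,\infty}$ become indispensable. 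A secondary nuisance is the behaviour of $\sigma$ at the endpoints of its support and at $0$, and the choice of scaling function for which one states the formalism — wavelet leaders being the natural one here, since that is the setting in which the universal upper bound above is clean.
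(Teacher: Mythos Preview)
Your proposal reproduces, in essence, the approach of Jaffard in \cite{JAFF_FRISCH}: take a countable intersection of classical Besov spaces $E_\sigma=\bigcap_n B^{s_n}_{p_n,\infty}$ with parameters running along the graph of $\tau=\sigma^*$. The paper explicitly addresses this attempt (end of Section~\ref{sec-typical}) and explains why it does \emph{not} solve the conjecture: in such intersections, typical singularity spectra are always \emph{increasing} affine pieces, and the same holds for $S^\nu$ spaces. The decreasing part of a general $\sigma\in\mathcal S_d$ is simply out of reach of your construction.

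The failure is already in your ``universal upper bound'' step. Membership in $B^{s,p}_\infty$ gives, for wavelet leaders, a counting estimate of the form $\#\{\lambda\in\mathcal D_j: d^L_\lambda\ge 2^{-jH}\}\lesssim 2^{j\,p(H-s+d/p)}$, which yields $D_f(H)\le \min(d,\, p(H-s)+d)$. Each such bound is a non-decreasing function of $H$; intersecting over a family $(s_n,p_n)$ produces the concave \emph{increasing} hull of these half-lines, not $\sigma$ itself. So for any $H$ beyond the abscissa $H_s$ where $\sigma$ reaches $d$, the only bound your space provides is $D_f(H)\le d$, and there are plenty of functions in $E_\sigma$ (indeed, typical ones) with $D_f(H)$ strictly larger than $\sigma(H)$ there. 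Your lower-bound Baire argument may well succeed, but the matching upper bound does not exist in $E_\sigma$, so $D_f=\sigma$ cannot hold generically.

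The paper's resolution (Theorem~\ref{main}) is genuinely different: instead of spatially homogeneous Besov constraints, it introduces Besov spaces \emph{in multifractal environment} $\widetilde B^{\mu,p}_q(\R^d)$, where the local modulus of smoothness is weighted by an almost-doubling measure $\mu$ built (via Theorem~\ref{th-prescrmu2}) to carry the prescribed multifractal formalism. The spatial heterogeneity encoded by $\mu$ is precisely what forces the decreasing part of the typical spectrum; even then, only a \emph{weak} WLMF is obtained for $H>H_s$, which the paper argues is essentially unavoidable. Your sketch misses this mechanism entirely.
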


Notice that ideas leading to a \ml formalism can also be found in thermodynamics (see  \cite{Halsey,Hentschel} and the large literature around thermodynamical formalism). This outlines the universality of the approach consisting in  describing local fluctuations via the (Legendre transform  of) global statistical quantities computed directly on the object (function, measure, random process) under consideration.

\medskip
 
 From now on,  and without loss of generality, we restrict our statements to measures and functions  supported in the cube $\zu^d$.

%%%%%%%%%%%%%%%%%%%%%%%%%%%%%%%%%%
%%%%%%%%%%%%%%%%%%%%%%%%%%%%%%%%%%
%%%%%%%%%%%%%%%%%%%%%%%%%%%%%%%%%%
%%%%%%%%%%%%%%%%%%%%%%%%%%%%%%%%%%
%%%%%%%%%%%%%%%%%%%%%%%%%%%%%%%%%%
%%%%%%%%%%%%%%%%%%%%%%%%%%%%%%%%%%
%%%%%%%%%%%%%%%%%%%%%%%%%%%%%%%%%%
\subsection{Prescription of multifractal formalism for measures}
\label{sec-prescrmu}

In case of measures $\mu\in\mathcal{M}(\zu^d)$, the formula for the $L^q$-spectrum is quite standard and given by
\begin{equation}
\label{def-tau}
\tau_\mu(q) = \liminf_{j\to +\infty} \frac{1}{-j} \log_2 \sum_{\lambda \in {\mathcal{D}}_j: \, \mu(\lambda)\neq 0} \mu(\lambda)^q,
\end{equation}
where $\mathcal{D}_j$ stands for the set of dyadic cubes $\lambda_{j,k}=2^{-j} k+[0,2^{-j}]^d$, $k\in\Z^d$,  of generation $j\in \Z$  (i.e. dyadic cubes with side-length equal to $2^{-j}$). It is easily seen that $\tau_\mu$ is always concave, non-decreasing,   and that $ - d \leq \tau_\mu(0^+)\leq \tau_\mu(1) = 0$. In addition, the support of $\tau_\mu$  is equal to $\R$ when 
$\limsup_{r\to 0^+} \frac{\log(\inf \{\mu(B(x, r)) : \  x \in {\tiny\suppmu\}})}{\log r} <+\infty$, and it is $[0,+\infty)$ when the same quantity is infinite \cite{Barralinverse}. 

Recall that the Legendre transform of a mapping $\tau:\R\to\R$ (used in the previous section) is defined for $H\geq 0$ as
$$\tau^*(H):= \inf_{q\in \R} (qH -\tau(q)).$$

Barral solved in \cite{Barralinverse} the following inverse problem.
\begin{theorem}
Let $\tau:\R\to \R$ be concave, non-decreasing, with   $ -d \leq \tau(0^+)\leq \tau(1) =0$.    There exists a probability  measure $\mu\in \mathcal{M}(\zu^d)$  compactly supported, such that $\tau_\mu=\tau$ and $\mu$ satisfies the multifractal formalism, i.e. $D_\mu=\tau^*  $.
\end{theorem}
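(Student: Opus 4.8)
The plan is to construct $\mu$ as an inhomogeneous Bernoulli (Mandelbrot-type) measure on the dyadic tree, whose branching probabilities are tuned so that the distribution of $\mu(\lambda)$ over generation-$j$ cubes reproduces the prescribed concave function $\tau$. The starting point is the observation that a concave, non-decreasing $\tau$ with $-d\le\tau(0^+)\le\tau(1)=0$ is determined by its Legendre transform $\tau^*$, which (with the usual conventions) is upper semi-continuous, concave, supported on an interval $[\alpha,\beta]$ with $0\le\alpha\le\beta\le+\infty$, satisfies $\tau^*(h)\le\min(h,d)$, and attains equality $\tau^*(h)=h$ at $h=\tau'(\infty)=\alpha$ and, symmetrically, $\tau^*(h)=\tau^*$'s maximum at the slope $\tau'(0^+)$. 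The idea is to discretize the ``multifractal budget'': at scale $2^{-j}$ one wants, for each relevant exponent $h$, roughly $2^{j\tau^*(h)}$ dyadic cubes carrying mass $\approx 2^{-jh}$, since then \eqref{def-tau} gives $\tau_\mu(q)=\liminf_j \frac1{-j}\log_2\sum_\lambda \mu(\lambda)^q = \liminf_j \frac1{-j}\log_2\sum_h 2^{j\tau^*(h)}2^{-jqh} = \inf_h(qh-\tau^*(h)) = (\tau^*)^*(q)=\tau(q)$, the last equality by the biconjugation theorem for concave $\tau$.

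The construction itself would proceed by choosing a countable dense family of ``target exponents'' $(h_i)$ in $[\alpha,\beta]$ together with their dimensions $s_i=\tau^*(h_i)$, and building $\mu$ as a concatenation over successive generation blocks $[J_k,J_{k+1})$ of the dyadic tree: within the $k$-th block one allocates, à la Besicovitch/Eggleston, a sub-tree of branching number $\approx 2^{s_{i}}$ per level on which each chosen child receives a fraction of the parent's mass close to $2^{-h_i}$, cycling through the $h_i$'s (and through all scales) so that \emph{every} point of $\supp\mu$ sees all of them in its ancestry. One must also seed in a ``fat'' part carrying the exponent $\alpha$ on a set of full dimension $\tau^*(\alpha)=\alpha$ so that the Legendre duality is saturated; handling $\beta=+\infty$ (when $\tau$ has a finite limit $\tau(+\infty)$ but no support constraint at the right end) requires letting $h_i\to\infty$ and controlling the corresponding exponentially thin sub-trees. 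The block lengths $J_{k+1}-J_k$ must grow fast enough that, at the end of each block, the \emph{empirical} distribution of $\log_2\mu(\lambda)/(-j)$ is within $\varepsilon_k$ of the target profile $\tau^*$, yet slowly enough that the $\liminf$ in \eqref{def-tau} is not spoiled by the transitional generations — a standard but delicate balancing.

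The verification splits into the lower and upper bounds for $D_\mu$ and for $\tau_\mu$. For the $L^q$-spectrum: the upper bound $\tau_\mu(q)\le\tau(q)$ is immediate from the presence, along a subsequence $j=J_k$, of $\approx 2^{J_k s_i}$ cubes of mass $\approx 2^{-J_k h_i}$ for each $i$; the lower bound $\tau_\mu(q)\ge\tau(q)$ requires showing that \emph{no} generation produces an anomalous concentration of mass, which follows from the block structure and the concavity of $\tau^*$ (the worst case at any scale is a convex combination of the target profiles, hence still dominated by $\tau^*$). For the spectrum, the lower bound $D_\mu(h)\ge\tau^*(h)$ comes from a mass-distribution-principle / Billingsley-lemma argument applied to the sub-tree coding exponent $h$, whose natural measure has dimension $\tau^*(h)$ and is carried by $E_\mu(h)$; the upper bound $D_\mu(h)\le\tau^*(h)$ follows from the $L^q$-spectrum via the easy half of the multifractal formalism, $D_\mu\le\tau_\mu^*=\tau^*$ (valid for all measures, using $\mu(B(x,r))\le C r^\alpha$ to ensure the relevant covering estimates). \textbf{The main obstacle} I expect is the simultaneous, uniform-in-scale control of the empirical mass distribution: one needs $h_\mu(x)=h$ to be an honest \emph{liminf} equal to $h$ for $\mu$-almost every $x$ in the $h$-coded set — i.e. the $\liminf$ over \emph{all} radii $r\to 0$, not merely along $r=2^{-J_k}$ — which forces the transitional generations between blocks to be engineered so that the ratio $\log\mu(B(x,r))/\log r$ never dips below $h-\varepsilon_k$ nor, for the points meant to carry the \emph{largest} exponent, rises in a way that would create spurious small exponents elsewhere; reconciling this with the need for fast-growing blocks is the technical heart of Barral's argument.
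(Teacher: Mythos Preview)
The paper does not contain a proof of this theorem: it is a survey, and the statement is quoted from Barral~\cite{Barralinverse} with the sentence ``Barral solved in~\cite{Barralinverse} the following inverse problem'' and no further argument. So there is no in-paper proof to compare against; the only contextual information the paper gives is that Barral's measure has a Cantor-like support rather than full support in $\zu^d$.

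Your plan is broadly consistent with that hint and with the standard technology for such inverse problems: an inhomogeneous (concatenated) Bernoulli-type construction on a pruned dyadic tree, with blocks of generations tuned to a dense set of target exponents $(h_i,\tau^*(h_i))$, is indeed the mechanism that produces a Cantor-supported measure with prescribed $L^q$-spectrum. Your identification of the main obstacle --- controlling the $\liminf$ in \eqref{defhmu} uniformly across transitional generations, not just along the block endpoints $J_k$ --- is accurate and is where the real work lies.

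Two points deserve tightening. First, your claim that $\tau^*(\alpha)=\alpha$ at $\alpha=\tau'(+\infty)$ is not correct in general: what Legendre duality with $\tau(1)=0$ gives is that $\tau^*(h_0)=h_0$ for $h_0$ any subgradient of $\tau$ at $q=1$, which need not be the left endpoint of $\supp(\tau^*)$. The ``fat'' part of the construction should target this $h_0$, not $\alpha$. Second, your argument for the lower bound $\tau_\mu(q)\ge\tau(q)$ (``the worst case at any scale is a convex combination of the target profiles, hence still dominated by $\tau^*$'') is where careless block growth can actually fail: if $J_{k+1}/J_k$ is unbounded, intermediate generations can have empirical profiles that are \emph{not} convex combinations of the targets but rather degenerate towards a single $h_i$, and one must check that this still yields $\sum_\lambda \mu(\lambda)^q \le 2^{-j(\tau(q)-\ep)}$. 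This is handled in Barral's paper by a careful choice of the block schedule, but it is not automatic from concavity alone.
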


See Figure \ref{figureFP} for an illustration.

%%%%%%%%%%%%%%%%%%%%%%%%%%%%%%%%%%%%%%%%%

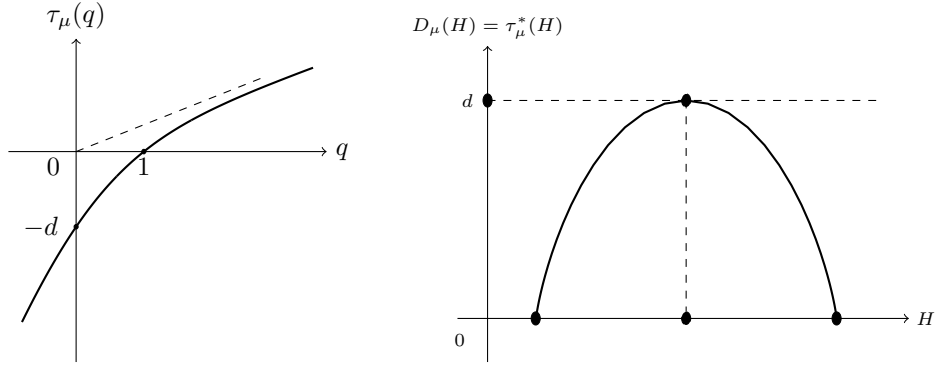
\begin{figure}  \begin{center}\begin{tikzpicture}[xscale=0.9,yscale=1.0]
{\small
\draw [->] (0,-2.8) -- (0,1.5) [radius=0.006] node [above] {$\tau_\mu(q)$};
\draw [->] (-1.,0) -- (3.7,0) node [right] {$q$};
 \draw [thick, domain=1.7:3.5, color=black]  plot ({\x},  {-ln(exp(\x*ln(1/5)) +exp(\x*ln(0.8)))/(ln(2)});
 \draw [thick, domain=0.6:1.7, color=black]  plot ({\x},  {-ln(exp(\x*ln(1/5)) +exp(\x*ln(0.8)))/(ln(2)});
\draw [thick, domain=-0.8:0.6, color=black]  plot ({\x},  {-ln(exp(\x*ln(1/5)) +exp(\x*ln(0.8)))/(ln(2)});
  \draw[dashed] (0,0) -- (2.8,1);
\draw [fill] (-0.1,-0.20)   node [left] {$0$}; 
\draw [fill] (-0,-1) circle [radius=0.03]  node [left] {$-d$ \ }; 

\draw [fill] (1,-0) circle [radius=0.03] [fill] (1,-0.2) node [ ] {$1 $}; }
\end{tikzpicture} \hskip .5cm
 \begin{tikzpicture}[xscale=2.0,yscale=2.9]
    {\tiny
\draw [->] (0,-0.2) -- (0,1.25) [radius=0.006] node [above] {$  {{D}}_\mu(H)=\tau_\mu^*(H)$};
\draw [->] (-0.2,0) -- (2.8,0) node [right] {$H$};
\draw [thick, domain=0:5]  plot ({-(exp(\x*ln(1/5))*ln(0.2)+exp(\x*ln(0.8))*ln(0.8))/(ln(2)*(exp(\x*ln(1/5))+exp(\x*ln(0.8)) ) )} , {-\x*( exp(\x*ln(1/5))*ln(0.2)+exp(\x*ln(0.8))*ln(0.8))/(ln(2)*(exp(\x*ln(1/5))+exp(\x*ln(0.8))))+ ln((exp(\x*ln(1/5))+exp(\x*ln(0.8))))/ln(2)});
\draw [thick, domain=0:5]  plot ({-( ln(0.2)+ ln(0.8))/(ln(2)) +(exp(\x*ln(1/5))*ln(0.2)+exp(\x*ln(0.8))*ln(0.8))/(ln(2)*(exp(\x*ln(1/5))+exp(\x*ln(0.8)) ) )} , {-\x*( exp(\x*ln(1/5))*ln(0.2)+exp(\x*ln(0.8))*ln(0.8))/(ln(2)*(exp(\x*ln(1/5))+exp(\x*ln(0.8))))+ ln((exp(\x*ln(1/5))+exp(\x*ln(0.8))))/ln(2)});
  \draw[dashed] (0,1) -- (2.6,1);
\draw [fill] (-0.1,-0.10)   node [left] {$0$}; 
\draw [fill] (0,1) circle [radius=0.03] node [left] {$d \ $}; 
\draw  [fill] (0.32,0) circle [radius=0.03] node [below] { };
\draw  [fill] (2.32,0) circle [radius=0.03] node [below] {  };
\draw  [fill] (1.32,1) circle [radius=0.03]  [dashed]   (1.32,1) -- (1.32,0)  [fill] (1.32,0) circle [radius=0.03]  node [below] { };
}
\end{tikzpicture}
\end{center}
\caption{{\bf Left:}  $L^q$-spectrum  of a  measure $\mu$ on $[0,1]$. {\bf Right:} The corresponding singularity spectrum of $\mu$ when it  satisfies a multifractal formalism.}
\label{figureFP}
\end{figure}

The  drawback of this first important step is that  the  measure constructed by Barral in \cite{Barralinverse}  has again a Cantor-like set as support (so it is not fully supported on $\zu^d$), hence is not suitable to model any  real-life signal supported by, say, an interval.
The result is reinforced in the upcoming paper \cite{BS-FP}, in which we build fully supported measures satisfying a prescribed multifractal formalism, which in addition are almost-doubling in the following sense. 

A Borel set function is a mapping $\mu$ associating with every Borel set $B\subset \zu^d$ a positive real number $\mu(B) \in [0,+\infty]$. A Borel set function  $\mu$  is  {\em almost-doubling} when there exists a non increasing function $\theta:(0,1]\to\R^+\setminus\{0\}$ such that :
\begin{itemize}
\item
$\theta (1)=0$ and $\lim_{r\to 0^+}\frac{ \theta(r)}{\log(r)}=0$ 
\item
there is  a constant $C\geq 1$ such that for all $x\in\zu^d$ and $r\in (0,1]$ one has 
\begin{equation}
\label{ad}
C^{-1}e^{-\theta(r)}\mu(B(x,r))\le \mu(B(x,2r)) \le Ce^{\theta(r)}\mu(B(x,r)).
\end{equation}
\end{itemize}

 When $\theta\equiv 0$, then $\mu$ is said to be {\em doubling}.

Doubling and almost-doubling measures occupy a special place in geometric measure theory since they are easier to deal with in many situations - such properties guarantee a certain stability of the values of $\mu$ in the sense that $\mu(B)$ and $\mu(B')$   have comparable values as soon as $B$ and $B'$ are two balls of comparable radii that are close to each other. It is thus important to investigate the possible combination of these properties with the multifractal ones, as  done in the following theorem proved in \cite{BS-FP}.

\begin{theorem}
\label{th-prescrmu2}
Let $\tau:\R\to \R$ be concave, non-decreasing, with   $ - d = \tau(0^+)\leq \tau(1) =0$.    

Then there exists an HM almost doubling measure   $\mu\in \mathcal{M}(\zu^d)$  with full support in $\zu^d$ such that $\tau_\mu=\tau$ and $\mu$ satisfies the multifractal formalism, i.e. $D_\mu=\tau^*  $.
\end{theorem}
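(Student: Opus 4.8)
The strategy is to construct $\mu$ as a suitable superposition (or concatenation) of building blocks, each of which is a self-similar-type measure realizing a single linear piece of the concave spectrum, and then to arrange these blocks at all scales and all locations of $\zu^d$ so as to obtain both the homogeneity (HM) and the almost-doubling property while not destroying the prescribed $L^q$-spectrum $\tau$. Since $\tau$ is concave, non-decreasing, with $-d = \tau(0^+)\le\tau(1)=0$, its Legendre transform $\tau^*$ is a concave function supported on $[\alpha,\beta]$ with $0\le\alpha\le\beta\le\infty$, and $\tau^*(h)\le\min(h,d)$; write $\tau$ as an infimum $\tau(q)=\inf_{n}(q h_n + d - D_n)$ of affine functions, where $(h_n,D_n)$ runs over a countable dense set of points on the graph of $\tau^*$ (including the endpoints and the point where $\tau^*(h)=d$, i.e. $q=0$). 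For each $n$ one builds a finite-type ``model'' measure $\nu_n$ on $\zu^d$ — e.g. a Bernoulli/Moran measure on a dyadic subdivision — whose $L^q$-spectrum is the single affine map $q\mapsto q h_n + d - D_n$ on a prescribed range of $q$ and whose multifractal spectrum is the single point mass $D_n\delta_{h_n}$ (these are classical computations for homogeneous Moran constructions).

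\textbf{Core construction.} The main work is to interleave the $\nu_n$ across scales. I would fix a rapidly increasing sequence of integers $0=j_0<j_1<j_2<\cdots$ and define $\mu$ by specifying its mass on dyadic cubes: on generations $j\in(j_{k-1},j_k]$ the measure behaves, inside every dyadic cube of generation $j_{k-1}$, like a rescaled copy of $\nu_{n(k)}$, where the index sequence $(n(k))_k$ is chosen so that every value of $n$ recurs infinitely often and, crucially, recurs \emph{in every sufficiently deep dyadic subtree} — this is what will force homogeneity, since the restriction of $\mu$ to any subcube $I$ eventually sees every block $\nu_n$ at infinitely many scales and hence has the same $L^q$-spectrum, hence (once the formalism is proved) the same spectrum as $\mu$. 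The $\liminf$ in the definition \eqref{def-tau} of $\tau_\mu$ is then handled by choosing the gaps $j_k-j_{k-1}$ long enough that at scale $j_k$ the sum $\sum_\lambda\mu(\lambda)^q$ is dominated by the contribution of the block that is ``worst'' (i.e. gives the smallest exponent) for that $q$, so that the $\liminf$ over $j$ picks out exactly $\inf_n(qh_n+d-D_n)=\tau(q)$; one must check the reverse inequality $\tau_\mu(q)\ge\tau(q)$ uniformly, which follows from concavity of each block's contribution and the constraint $-d=\tau(0^+)$ (this last equality is what lets one keep the measure fully supported, every dyadic cube receiving positive mass).

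\textbf{Almost-doubling and the lower bound on dimensions.} To get the almost-doubling property \eqref{ad} one designs each block $\nu_n$ to already be doubling (a Moran measure with bounded ratios is doubling on its own scales), and controls the transition between consecutive blocks: the multiplicative error incurred when passing from generation $j_k$ to $j_k+1$ contributes to $\theta(r)$, and by making $j_k/j_{k-1}\to\infty$ one ensures $\theta(r)=o(\log r)$, i.e. the accumulated ``seams'' are subpolynomial. The last and most delicate point — and the step I expect to be the main obstacle — is the lower bound $D_\mu(h)\ge\tau^*(h)$ for every $h$ in the support, i.e. the ``hard part'' of the multifractal formalism: one must exhibit, for each $h=h_n$, a large set of points with local dimension exactly $h_n$ and Hausdorff dimension $\ge D_n$. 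The natural candidate is an auxiliary (Gibbs-type) measure $m_n$ carried by the points that, at the scales $(j_{k-1},j_k]$ with $n(k)=n$, sit in the ``typical'' cubes of $\nu_n$; one then runs a mass-distribution / Billingsley-lemma argument to show $m_n$-a.e. point has $\mu$-local dimension $h_n$ and $\dim m_n\ge D_n$. The difficulty is that between two $n$-blocks the point wanders through other blocks $\nu_{n(k')}$, which perturb the local dimension at intermediate scales; one must show these perturbations wash out in the $\liminf$, which again requires the scale gaps to grow fast enough and uses that $h_n$ lies in the common admissible range. Combining the upper bound (from $\tau^*$, valid in general by the easy half of the formalism together with $\tau_\mu=\tau$) with this lower bound gives $D_\mu=\tau^*$ on the support, and the HM property transfers the identity to every subcube, completing the proof.
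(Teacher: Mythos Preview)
The paper you were given is a survey: Theorem~\ref{th-prescrmu2} is stated there but not proved --- the text explicitly attributes the proof to the forthcoming reference \cite{BS-FP}. There is therefore no proof in this paper to compare your attempt against.

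That said, your outline follows the standard ``concatenation across scales'' strategy (going back at least to Barral's work \cite{Barralinverse} and presumably the basis of \cite{BS-FP} as well): approximate $\tau$ by a countable family of simpler spectra, realize each by an elementary Bernoulli-type measure, and glue rescaled copies along a fast-growing sequence of generations $(j_k)$, with every block recurring infinitely often. The identification of the delicate step --- the lower bound $D_\mu(h)\ge\tau^*(h)$ via auxiliary Gibbs-type measures and a Billingsley argument --- is also correct.

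One point deserves correction. You write that each block $\nu_n$ is a Bernoulli/Moran measure whose ``multifractal spectrum is the single point mass $D_n\delta_{h_n}$''. This is incompatible with the requirement that $\mu$ have \emph{full support} in $\zu^d$: a measure with single-point spectrum $\{h_n\}$ and $D_n<d$ is supported on a set of dimension $D_n<d$, hence on a Cantor-like set, and concatenating such blocks produces a measure whose support is nowhere dense. The correct choice is to take each $\nu_n$ to be a \emph{fully supported} Bernoulli measure on the dyadic grid whose (non-trivial, concave) spectrum is \emph{tangent} to $\tau^*$ at $(h_n,D_n)$; equivalently, $\tau_{\nu_n}$ touches $\tau$ at $q_n=(\tau^*)'(h_n)$ and lies above it elsewhere. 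With this modification the $\liminf$ in \eqref{def-tau} still selects $\inf_n\tau_{\nu_n}(q)=\tau(q)$, full support is preserved, each block is genuinely doubling, and your almost-doubling and lower-bound arguments go through as sketched.
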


Although   Gibbs measures associated with H\"older regular potentials and smooth maps provide examples of doubling measures with non-trivial multifractal behavior, it may seem surprising  that the almost doubling property (which, as said above, limits the local variations of a measure) does not constitute  a constraint from the multifractal formalism standpoint:    every (admissible) concave mapping can be obtained as  the singularity spectrum of a compactly supported probability measure satisfying the multifractal formalism.

Theorem \ref{th-prescrmu2}   leaves open interesting questions in ergodic theory and dynamical systems, and geometric measure theory, which to the best of our knowledge are not completely addressed yet: 
\begin{enumerate}
\item
Can the almost doubling property be simplified in a "simple" doubling property in Theorem \ref{th-prescrmu2}?
\item
Given an almost doubling measure $\mu$, is there a doubling measure $\widetilde \mu$ with same multifractal behavior as $\mu$?
\item
 Is it possible to find a H\"older potential on a suitable  dynamical system such that the associated invariant measure satisfies the multifractal formalism with a  $L^q$-spectrum given in advance? 
\end{enumerate}

\begin{remark}
\label{rk11}
In Theorem \ref{th-prescrmu2}, it is possible to impose additional conditions on the  measures $\mu$ so that the same result ($D_\mu = \tau^*$) holds. One useful condition, which will be used later,  is the following.

%%%%%%%%%%%%%%%%%%%%%%%%%%%%%%%%%%%%%%%%%
\begin{definition}\label{mildcond1} Let $\Theta$ be the set of non decreasing functions $\theta:\N\to\R^*_+$ such that:
\begin{enumerate}
\item $\theta(j)=o(j)$ as $j\to\infty$
\item $\theta(0)=0$
\item
 for all $\ep>0$, there exists $j_\ep\in\N$ such that for all  $j'\ge j\ge j_\ep$,
$
\theta(j')-\theta(j)\le \ep(j'-j)$.
\end{enumerate}
A  measure   $\mu\in \mathcal{M}(\zu^d)$ (or $\mu\in \mathcal{M}(\R^d)$) satisfies property (P) if there exist  $C,s_1, s_2>0$ such that:

\begin{itemize}
\item[(P1)] for all $j\in\N$ and $\lambda\in \mathcal D_j$, if $\mu(\la)\neq 0$, then 
\begin{equation}
\label{minmaj1}
C^{-1} 2^{-j s_2}\le \mu(\lambda)\le C 2^{-j s_1}.
\end{equation}

\item[(P2)] There exists $\theta\in\Theta$ such that for all $j,j'\in\N$ with $j'\ge j$,  and  all $\lambda, \widetilde \lambda\in\mathcal D_j$  such that $\mu(\la)\neq 0$, $\mu(\tilde \la)\neq 0$, $\partial\lambda\cap \partial  \widetilde \lambda\neq\emptyset$,  and $\lambda'\in\mathcal D_{j'}$ such that $\lambda'\subset \lambda$:
\begin{equation}\label{propmu}
C^{-1}2^{-\theta(j)} 2^{(j'-j)s_1}  \mu(\lambda')\le \mu(\widetilde \lambda)\le C2^{\theta(j)} 2^{(j'-j)s_2}  \mu(\lambda').
\end{equation}
\end{itemize} 
\end{definition}
%%%%%%%%%%%%%%%%%%%%%%%%%%%%%%%%%%%%%%%%%
Heuristically, this last condition yields for every dyadic cube $\la \in \mathcal{D}_j $ a control of the $\mu$-mass of the cubes $\tilde\la  \in \mathcal{D}_{\tilde j} $ with $\tilde j\geq j$ and  $\tilde\la \subset 3\la$. It is  easily  checked on self-similar measures satisfying an open-set condition for instance. 

In \cite{BS-FP}, it is proved that there exist measures satisfying (P) for which the conclusion of   
Theorem \ref{th-prescrmu2} holds.
\end{remark}
 
%%%%%%%%%%%%%%%%%%%%%%%%%%%%%%%%%%
%%%%%%%%%%%%%%%%%%%%%%%%%%%%%%%%%%
%%%%%%%%%%%%%%%%%%%%%%%%%%%%%%%%%%
%%%%%%%%%%%%%%%%%%%%%%%%%%%%%%%%%%
%%%%%%%%%%%%%%%%%%%%%%%%%%%%%%%%%%
%%%%%%%%%%%%%%%%%%%%%%%%%%%%%%%%%%
%%%%%%%%%%%%%%%%%%%%%%%%%%%%%%%%%%
\subsection{Prescription of multifractal formalism for functions}
\label{sec-prescrfunc}

While the definition of the $L^q$-spectrum for measures is quite standard and intuitive, finding a suitable formula for the  $L^q$-spectrum of functions  is not straightforward. Indeed, one easily sees that equation \eqref{defzeta} does not allow one to catch and describe the local regularity characteristics of smooth functions (with pointwise exponents greater than 1). Many alternative formulas have been proposed, and most of them are based on wavelets. 
It is thus useful at this point to set the notation concerning wavelets coefficients and wavelet leaders. 

\medskip

Let $\Phi:\R^d\to \R$ be a scaling function and consider an associated  family of  smooth   wavelets $\Psi=\{\psi^{(i)}\}_{i=1,...,2^d-1}$ belonging to $C^r(\R^d)$, with $r\in\N^*$ (for a general construction, see~\cite[Ch. 3]{Meyer_operateur}).  For simplicity, we   assume that $\Phi$ and the wavelets $\Psi$  are compactly supported \cite{Daub92}. For every $j\in \Z$, recall that   $\mathcal{D}_j $ is the set of dyadic cubes of generation $j$, i.e. if $k=(k_1,..., k_d)\in \Z^d$ and 
$$\lambda_{j,k} := \prod_{i=1,...,d} [k_i2^{-j}, (k_i+1)2^{-j}) \subset \R^d$$
 then
$\mathcal{D}_j =\{ \lambda_{j,k}  : k\in \Z^d\}$ (see the beginning of Section \ref{sec-prescrmu}). Further we consider the set
 $$\Lambda_j =\{\lambda=(i,j,k) :  \,k\in \Z^d, \ i\in \{1,...,2^d-1\} \},$$
and      $\Lambda=\bigcup_{j\in\Z} \Lambda_j $. By abuse of notation, $\lambda\in \Lambda_j$ will still be called a dyadic cube of generation $j$ and identified with $\lambda = \lambda_{j,k}\in \mathcal D_j$.

For every cube $\lambda=(i,j,k)\in\Lambda$, we denote by $\psi_\lambda$ the function $x\mapsto \psi^{(i)}(2^j x-k)$. The set of  functions $2^{dj/2}\psi_\lambda$, $j\in\Z$, $\lambda\in\Lambda_j$, forms a Hilbert basis of $L^2(\R^d)$, so that  every  $f\in L^2(\R^d)$  can be expanded as
$$ 
f=\sum_{j\in\Z}\sum_{\lambda\in\Lambda} d_\lambda\psi_\lambda,  \  \mbox{ with } \   
d_\lambda=\int_{\R^d} 2^{dj}\psi_\lambda(x) f(x)\,{d}x,
$$
where equality holds in $L^2$ (we will work with smooth functions, so equality will also hold pointwise). Observe that we choose an $L^\infty$ normalization for the so-called {\em wavelet coefficients}   $(d_\lambda)_{\lambda\in\Lambda} $  of $f\in  L^2(\R^d)$ (more generally, of $f\in L^p(\R^d)$ for some $p\in [1,\infty]$). For $f\in L^2(\R^d)$, define also   for $ k\in\Z^d$
\begin{equation}
\label{defbeta}
\beta(k)=\int_{\R^d} f(x) \Phi (x-k)\, {d}x.
\end{equation}

Finally, for   a function $f\in L^p(\R^d)$ with $p\in [1,\infty]$ whose wavelet coefficients are denoted by $(d_\lambda)_{\lambda\in \Lambda}$, the wavelet leader associated with $\lambda\in \mathcal{D}_j $ is  
$$
d^L_\lambda=\sup_{\lambda'\in \Lambda, \,\lambda'\subset 3\lambda}|d_{\lambda'}|,
$$ 
where for $\lambda\in \mathcal{D}_j$, $3\lambda$ stands for the cube with same center as $\lambda$ and radius $\frac{3}{2}2^{-j}$ (it is the cube that contains $\lambda $ as well as its $2^d-1$ neighbors in $\mathcal{D}_j$).  While wavelet coefficients are usually sparse (only a few coefficients carry the important information about $f$),  wavelet leaders   possess a strong hierarchical structure    since $0\leq d^L_{\lambda'}\leq d^L_\lambda$ when $\lambda'\subset \lambda$. 

\begin{remark}
Although the notations for wavelet coefficients and wavelet leaders do not mention the function $f$, they highly depend on $f$ and we should never forget about it!  
\end{remark}

Wavelet coefficients and wavelet leaders characterize  the pointwise H\"older exponents: indeed, if  $f\in C^\epsilon(\R^d)$ for some $\epsilon>0$, then  for every $x_0\in \zu^d$ one has 
\begin{equation}
\label{eq-carac}
h_f(x_0)=\liminf_{j\to\infty} \frac{\log d^L_{\lambda_{j}(x_0)}}{\log(2^{-j})},
\end{equation}
where $\lambda_{j}(x_0)$ is the unique cube $\lambda\in \mathcal{D}_j$ that contains $x_0$  (see \cite{JAFF_WAVTECH}).

It was quite  clear from the beginning  that a formula based on increments like \eqref{defzeta} was not stable neither mathematically nor numerically. To circumvent this difficulty, the  idea of introducing wavelets  (whose computation requires   local means,     bringing simultaneously a  numerical stability crucial for applications and a natural connection with  characterizations of standard functional spaces, see Section \ref{sec-typical})  was introduced by Alain Arn\'eodo and his collaborators. Two formulations are nowadays recognized to be the most relevant:

\begin{itemize}
   
 \item Formula based on wavelets:
\begin{equation}
\label{def-eta}
T_f (q,j)=  \sum_{\lambda\in \Lambda_j :  d_\la\neq 0} | d_{\lambda}|^q   \ \longrightarrow \ \eta_f(q) =\liminf_{j\to +\infty} \frac{\log_2 T_f (q,j)}{ {-j}}.
\end{equation}

 \item Formula based on  wavelet leaders:
\begin{equation}
\label{def-L}
L_f (q,j)=  \sum_{\lambda\in \mathcal{D}_j  : d_\la^L\neq 0}  | d^L_{\lambda}|^q   \ \longrightarrow \ L_f(q) =\liminf_{j\to +\infty}  \frac{\log_2 L_f (q,j)} { {-j}}.
\end{equation}
\end{itemize}

Even if wavelets brought some stability in the computations, wavelet leaders are now recognized as the most efficient, relevant and numerically exploitable measurements of local and global regularity. In particular, the hierarchical structure of wavelet leaders (i.e. $0\leq d_\lambda \leq d_{\lambda'}$ as soon as $\lambda \subset \lambda'$) makes all computations easier and more stable \cite{AJW-1}.

\begin{definition} 
The wavelet \ml formalism WMF ({\em resp.} wavelet leader \ml formalism WLMF) is satisfied for a function $f$  on an interval $J\subset \R^+$ when $D_f(H) = (\eta_f  )^*(H) $  ({\em resp.} $D_f(H)= (L_f )^*(H)$) for every $H\in J$.

We also say that a function $f$ satisfies the weak wavelet leader \ml formalism   (weak-WLMF)  on an interval $J\subset \R^+$  when there exists an increasing sequence $(j_n)_{n\geq 1}$  of integers such that if $\widetilde L_f(q) =\liminf_{n\to +\infty}  \frac{\log_2 L_f (q,j_n)} { {-j_n}}$,  then $D_f (H) = (\widetilde L_f)^*(H)$ for every $H\in J$.
\end{definition}

\begin{remark}
The above definition of formalisms depends {\em a priori} on the chosen wavelets $\Psi$. Actually it does not depend on $\Psi$ in the increasing part of the multifractal spectrum \cite{JAFF_WAVTECH}, but it does in the decreasing part. For simplicity, we do not mention this dependence in the notations.
\end{remark}

Let $\mathcal{S}_d$ be the set of admissible singularity spectra for functions satisfying a \ml formalism, i.e.
\begin{equation}
\label{def-Sd}
\mathcal{S}_d= \left\{ \sigma:\ \R^+ \to [0,d]\cup\{-\infty\}: \! \begin{cases}  \  \sigma \mbox { is compactly supported in $(0,+\infty)$, concave, \!\!\! } \!\!  \!\!\!  \\ \mbox{ with maximum equal to $d$}. \end{cases}  \!\!\! \right\}.
\end{equation}
\medskip

We are now able to state  the result on multifractal formalism prescription for functions. 
\begin{theorem}

For every   mapping $\sigma \in \mathcal{S}_d$, there exists a function $f\in L^2(\R^d)$ satisfying the WLMF and whose singularity spectrum is equal to $\sigma$.
\end{theorem}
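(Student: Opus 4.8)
The plan is to construct $f$ directly from its wavelet coefficients, building on the constructions behind Theorems~\ref{th-ps1} and~\ref{th-prescrmu2} and on the link provided by Theorem~\ref{th-Fmu}. First I would reduce to a normalized situation: since $\sigma\in\mathcal S_d$ is concave, compactly supported in $(0,+\infty)$, with maximum $d$, its Legendre transform $\sigma^*$ is a concave, non-decreasing function $\tau$ on $\R$ with $\tau(q)\leq -d$ forbidden but with $\tau(0^+)=-d$ (the maximum of $\sigma$ being $d$) and $\tau(q_0)=0$ for the $q_0$ with $\sigma'(q_0)$-slope hitting $H=\sigma^{-1}(d)$; by an affine change one can even arrange $\tau(1)=0$. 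Thus $\tau$ is exactly of the type allowed in Theorem~\ref{th-prescrmu2} (concave, non-decreasing, $-d=\tau(0^+)\leq\tau(1)=0$), and $\sigma=\tau^*$. Apply Theorem~\ref{th-prescrmu2}, \emph{including} the reinforcement of Remark~\ref{rk11}: we obtain an HM almost-doubling probability measure $\mu\in\mathcal M(\zu^d)$ with full support, with $\tau_\mu=\tau$, $D_\mu=\tau^*=\sigma$, and moreover satisfying property (P) with exponents $s_1,s_2>0$ and a function $\theta\in\Theta$.

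Next I would define $f=F_\mu$, the function whose wavelet coefficients are $d_\lambda=\mu(\lambda)$ for every dyadic cube $\lambda\in\Lambda$ (identifying $\lambda=(i,j,k)$ with the dyadic cube $\lambda_{j,k}$). Property (P1) gives $C^{-1}2^{-js_2}\leq d_\lambda\leq C2^{-js_1}$, so the wavelet coefficients decay uniformly like $2^{-js_1}$, which guarantees $f\in C^{s_1'}(\R^d)$ for any $s_1'<s_1$ and in particular $f\in L^2(\R^d)$ and $f\in C^\epsilon$, so that the wavelet-leader characterization \eqref{eq-carac} of $h_f$ is valid. By the hypothesis $\mu(B(x,r))\leq Cr^\alpha$ built into Theorem~\ref{th-Fmu} — which holds here because (P1) forces a uniform lower H\"older bound on $\mu$ via the almost-doubling property — Theorem~\ref{th-Fmu} applies and yields $D_f=D_\mu=\sigma$. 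This already secures the singularity spectrum; it remains to verify the WLMF, i.e. $D_f=(L_f)^*$, which by $D_f=\sigma=\tau_\mu^*$ amounts to proving $L_f(q)=\tau_\mu(q)$ for all $q$ (or at least that $(L_f)^*=\tau_\mu^*$ on the relevant range, using that both are concave).

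The heart of the argument is therefore the identity $L_f(q)=\tau_\mu(q)$, and this is where property (P) does the work. The wavelet leader is $d^L_\lambda=\sup_{\lambda'\subset 3\lambda}|d_{\lambda'}|=\sup\{\mu(\lambda'):\lambda'\in\mathcal D_{j'},\ j'\geq j,\ \lambda'\subset 3\lambda\}$. On one hand $d^L_\lambda\geq\mu(\lambda)$ (taking $\lambda'=\lambda$), giving $L_f(q)\leq\tau_\mu(q)$ for $q>0$ and $\geq$ for $q<0$ immediately from the sums in \eqref{def-L} and \eqref{def-tau}. For the reverse inequality, fix $\lambda\in\mathcal D_j$. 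Any $\lambda'\subset 3\lambda$ of generation $j'\geq j$ is contained in one of the $2^d$ cubes $\widetilde\lambda\in\mathcal D_j$ meeting $\lambda$ (so $\partial\lambda\cap\partial\widetilde\lambda\neq\emptyset$); property (P2) with the roles of $\lambda,\widetilde\lambda$ suitably chosen gives $\mu(\lambda')\leq C2^{\theta(j)}2^{(j'-j)s_2}\mu(\widetilde\lambda^{*})$ for a comparison cube, and combined with (P1) and the fact that $\theta(j)=o(j)$ and $2^{(j'-j)s_2}$ contributes a vanishing exponential rate when $j'$ is taken close to $j$ (the leader is essentially determined by scales $j'$ comparable to $j$, because finer cubes have exponentially smaller $\mu$-mass by (P1)), one gets $d^L_\lambda\leq C2^{\theta(j)}\max_{\widetilde\lambda}\mu(\widetilde\lambda)$ up to a subexponential factor. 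Summing $q$-th powers over $\lambda\in\mathcal D_j$ and using that each $\widetilde\lambda$ is counted a bounded number of times, the sum $L_f(q,j)$ is comparable, up to $2^{\pm\theta(j)|q|}$ and bounded multiplicative constants, to $\sum_{\widetilde\lambda}\mu(\widetilde\lambda)^q$, which is exactly $\tau_\mu(q,j)$; since $\theta(j)=o(j)$ this subexponential slack disappears in the $\liminf$, yielding $L_f(q)=\tau_\mu(q)$. Finally $(L_f)^*=\tau_\mu^*=\sigma$, and together with $D_f=\sigma$ this is precisely the WLMF on the support of $\sigma$. I expect the main obstacle to be exactly this step: carefully handling the decreasing part $q<0$ (where the $\sup$ defining the leader interacts badly with cancellation and one must control cubes $\lambda'$ of \emph{all} finer generations, not just nearby ones) and checking that the condition (P) — in particular the neighbor-comparison estimate \eqref{propmu} across the $2^d-1$ adjacent cubes in the definition of $3\lambda$ — genuinely suffices to pin down $L_f$ rather than merely bound it; the homogeneity of $\mu$ is what rescues the argument on every subcube and hence gives the result uniformly.
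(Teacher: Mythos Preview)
Your overall strategy --- manufacture a measure $\mu$ via Theorem~\ref{th-prescrmu2}, pass to $F_\mu$ via Theorem~\ref{th-Fmu}, and then identify $L_{F_\mu}$ with $\tau_\mu$ --- is exactly the paper's route. But your reduction step contains a genuine gap.

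The Legendre transform $\tau$ of a general $\sigma\in\mathcal S_d$ satisfies $\tau(0^+)=-d$, but there is \emph{no reason} for $\tau(1)=0$: this holds if and only if $\sigma(H)\le H$ for all $H$ with equality somewhere, and $\mathcal S_d$ imposes no such constraint (take e.g.\ a concave $\sigma$ supported in $[5,10]$, or one with $\sigma(H)>H$ near the left endpoint). Your ``affine change'' to arrange $\tau(1)=0$ is therefore not a harmless reparametrisation: it changes $\sigma$ into some other spectrum $\widetilde\sigma$, and the measure you obtain from Theorem~\ref{th-prescrmu2} has $D_\mu=\widetilde\sigma$, not $\sigma$. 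You then set $d_\lambda=\mu(\lambda)$ and invoke Theorem~\ref{th-Fmu}, concluding $D_f=D_\mu=\sigma$; but in fact $D_f=\widetilde\sigma$, and you never undo the normalisation. The paper fixes precisely this: it first chooses $\alpha,\beta>0$ so that $\sigma_{\alpha,\beta}(H):=\sigma(\alpha H+\beta)$ satisfies the measure constraints ($\sigma_{\alpha,\beta}(H)\le H$ with equality at some $H_0$), builds $\mu$ with $D_\mu=\sigma_{\alpha,\beta}$, and then --- crucially --- defines $f$ via the \emph{rescaled} coefficients $\widetilde d_\lambda=\mu(\lambda)^\alpha 2^{-j\beta}$ rather than $\mu(\lambda)$. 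The elementary observation $h_{f_{\alpha,\beta}}(x)=\alpha h_{F_\mu}(x)+\beta$ then gives $D_f(H)=D_\mu((H-\beta)/\alpha)=\sigma(H)$, and the WLMF transfers because the rescaling acts affinely on $L_f$ as well. This undoing step is the one idea you are missing.

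On the second half: your laboured use of property~(P) to control $d^L_\lambda$ is unnecessary. Since the coefficients are $\mu(\lambda)$ and $\mu$ is a measure, monotonicity alone gives $d^L_\lambda=\max_{\tilde\lambda\in 3\lambda\cap\mathcal D_j}\mu(\tilde\lambda)$ (any $\lambda'\subset 3\lambda$ sits in one of the $3^d$ neighbouring $j$-cubes and $\mu(\lambda')\le\mu(\tilde\lambda)$); the paper dispatches $L_f=\tau_\mu$ in one line from this hierarchy together with the almost-doubling of $\mu$ already guaranteed by Theorem~\ref{th-prescrmu2}. There is no need to track cubes of all finer generations or to invoke (P2).
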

 
 \begin{proof}

Observe that if a function $f$ has its wavelet coefficients $d_\lambda$   given by $\mu(\lambda)$ for some probability  measure $\mu\in \mathcal{M}(\zu^d)$, then for every choice of  $\alpha, \beta>0$, the function $f_{\alpha,\beta}$ whose wavelet coefficients are $\tilde d_\lambda:= d_\lambda^\alpha2^{-j\beta}$ satisfies 
$$\mbox{ for every $H\geq 0$, } \ \ D_{f_{\alpha,\beta}} (H) = D_f\left( \frac{H-\beta}{\alpha}\right).$$
This simply follows from \eqref{eq-carac} and the fact that $h_{f^{\alpha,\beta}}(x_0) =  \alpha h_f(x_0)+\beta$ for all $x_0$.

\smallskip

Let $\sigma:\R^+\to [0,d]\cup \{-\infty\} \in \mathcal{S}_d$  be a mapping satisfying the conditions to be a singularity spectrum of a function satisfying a \ml formalism.

Let $\alpha,\beta$ be two strictly positive real numbers such that the mapping $\sigma_{\alpha,\beta}(H) = \sigma(\alpha H +\beta)$ satisfies $\sigma_{\alpha,\beta}(H)\leq H$ and there exists $H_0>0$ such that $\sigma_{\alpha,\beta}(H_0)=H _0$. The existence  of ($\alpha,\beta)$ is an exercise (notice that    ($\alpha,\beta)$ need not be unique).

Theorem \ref{th-prescrmu2} provides us with a measure $\mu$ satisfying the \ml formalism for measures and $D_\mu = \sigma_{\alpha,\beta}$. 

Then,  Theorem \ref{th-Fmu}  yields that the function $F_\mu$ whose wavelet coefficients are given by $d_\lambda=\mu(\lambda)$ has the same singularity spectrum as $\mu$. In addition, comparing \eqref{def-tau} with   \eqref{def-L}, and using the hierarchical structure of the measure (i.e. $\mu(\lambda') \leq \mu(\lambda)$ whenever $\lambda'\subset \lambda$), one sees that $\tau_\mu(q) = L_{F_\mu}(q)$ for every $q\in \R$, hence $F_\mu$ satisfies the WLMF.

Finally,  using the first remark of this proof, the function $F$ whose wavelet coefficients   equal $\mu(\lambda)^\alpha2^{-j\beta}$ has its  singularity spectrum equal to $\sigma$ and satisfies the WLMF.
 \end{proof}

We thus have a complete answer for the prescription of \ml formalism for functions.
But at this point, one may have the feeling that the functions we built are mathematical toy examples. The purpose of the last sections is to explain that for any choice of  concave  admissible mapping $\sigma$, there are natural functional spaces in which typical functions have exactly $\sigma $ as singularity spectrum.  This  confirms and strengthens the overall presence of multifractals in most of science fields, and reinforces the position of \ml machinery as legitimate tool  in signal processing and data analysis.

%%%%%%%%%%%%%%%%%%%%%%%%
%%%%%%%%%%%%%%%%%%%%%%%%%
%%%%%%%%%%%%%%%%%%%%%%%%%
%%%%%%%%%%%%%%%%%%%%%%%%%
%%%%%%%%%%%%%%%%%%%%%%%%%
%%%%%%%%%%%%%%%%%%%%%%%%%
%%%%%%%%%%%%%%%%%%%%%%%%%
%%%%%%%%%%%%%%%%%%%%%%%%%
%%%%%%%%%%%%%%%%%%%%%%%%%
%%%%%%%%%%%%%%%%%%%%%%%%%
%%%%%%%%%%%%%%%%%%%%%%%%%
%%%%%%%%%%%%%%%%%%%%%%%%%
%%%%%%%%%%%%%%%%%%%%%%%%%
\section{Typical multifractal behavior in classical functional spaces}
\label{sec-typical}

As emphasized above, it is possible to find mathematical models that mimic large classes of  \ml  behavior, in particular including all concave singularity spectra. This last part of the   results is key, since for real-life data (multi-dimensional and/or multivariate signals, images, ...) only estimates for the $L^q$-spectrum are numerically accessible (based on log-log plots on a well-chosen range of scales). Indeed, the standard paradigm is   to assume that the discrete data $f$ (say, a signal) is obtained from discrete samples of a mathematical  model obeying a multifractal formalism, and to consider that the Legendre transform of the estimated $L^q$-spectrum contains relevant information regarding the distribution of the singularities of  $f$ (somehow extrapolating on Frisch-Parisi heuristics). This Legendre transform is thus viewed as an "approximation" of the singularity spectrum of the data, although the   meaning of the singularity spectrum of a discretized signal is not made precise. The obtained estimated singularity spectrum of the data $f$ possesses various characteristics (values of the largest and the smallest exponents, locations of the maximum, curvature of the concave spectrum at its maximum,...) which are then used as  classification tools between  numerous samples of a physical, medical,... phenomenon. This has proven to be relevant in various fields going from medicine (heart-beat rate and X-ray analysis) and turbulence \cite{LashermesRouxJaffardAbry} to, recently, more surprising areas (paintings analysis \cite{AWJ-2}, text analysis \cite{Leonarduzzi-text}).

\medskip

Inspired by these applications,  it is thus key to investigate whether the mathematical objects we regularly meet satisfy a multifractal formalism (so that all these heuristics described above lie on   solid mathematical grounds). In this survey, we focus on "typical" objects in the sense of Baire: in a Baire space $E$, a property $\mathcal{P} $ of elements $x\in E$  is {\em typical} or {\em generic}  when the set $\{x\in E: x$ satisfies $\mathcal{P}\}$ is a residual set, i.e. its complement is included in a first Baire category set (a  union of countably many nowhere dense sets in $E$).

\medskip

Regularity properties of typical  functions  have been explored since the pioneer works of Banach \cite{Banach-typical} or Mazurkiewicz \cite{Mazu1} for instance. 
The seminal  result concerning \ml properties  of typical functions  is due to Buczolich and Nagy, who proved the following \cite{BUC_Nagy}.

\begin{theorem}
\label{th-bucnagy}
Let $\mathcal{M}on(\zu)$ be the set of continuous monotone functions $f:\zu\to\R$ equipped with the supremum norm of functions. Typical functions  in  $\mathcal{M}on(\zu)$  are multifractal with singularity spectrum equal to $D_f(H) = H \cdot {\bf 1\!\!\! 1}_{[0,1]}(H) + (-\infty)\cdot {\bf 1\!\!\! 1}_{(1,+\infty]}(H)$.\end{theorem}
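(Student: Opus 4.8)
The plan is to prove the two assertions — the lower bound $h_f(x)\ge \min(H,1)$ shape for a suitable dense $G_\delta$ set, plus the matching upper bound on the dimensions $\dim E_f(H)\le H$ for $H\in[0,1]$ together with the existence of a large enough set of exponents — by exhibiting explicit dense subsets of $\mathcal{M}on(\zu)$ whose members are ``spectrally rich'', and then intersecting countably many such sets. The natural first step is to reduce to the model building block: for a monotone continuous $f$, near a point $x$ the increment $f(x+r)-f(x-r)$ controls (and, up to constants, is controlled by) the oscillation, so $h_f(x)=\liminf_{r\to0}\frac{\log(f(x+r)-f(x-r))}{\log r}$ whenever $f$ is, say, locally non-constant near $x$; at points where $f$ is locally constant one has $h_f(x)=+\infty$, but such points form an open set of zero contribution after the typical perturbation. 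I would then observe that a pure ``staircase'' (devil's-staircase-type) function built from a Cantor set of dimension $\delta$ and redistributing mass has, at $\mu$-typical points, exponent $\delta$, and by concatenating or superposing countably many such staircases with dimensions $\delta$ dense in $[0,1]$ one produces functions realizing the whole segment $H\mapsto H$ on $[0,1]$.

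The core of the argument is the Baire-category machinery. For each rational $H_0\in(0,1)$ and each $n$, I would define $U_{H_0,n}$ to be the set of $f\in\mathcal{M}on(\zu)$ such that the collection of dyadic intervals $I$ of generation $\ge n$ on which the oscillation of $f$ is roughly $|I|^{H_0}$ is so abundant that the set of points belonging to infinitely many of them has box-counting (hence Hausdorff) lower bound close to $H_0$, and moreover such that $f$ has oscillation $\le |I|^{H_0'}$ on the relevant intervals for $H_0'$ slightly below $H_0$, pinning the exponent. Each $U_{H_0,n}$ is open (oscillation is continuous in the sup norm, and the conditions are about finitely many scales up to truncation, the infinite part being an open ``large at some finite stage'' requirement) and dense (given any monotone $g$ and $\ep>0$, add a small staircase increment of size $<\ep$ supported on a Cantor set of dimension $H_0$ to push $g$ into $U_{H_0,n}$ — monotonicity is preserved because we only add non-decreasing bumps). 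The residual set $\mathcal G=\bigcap_{H_0\in\Q\cap(0,1)}\bigcap_n U_{H_0,n}$ then consists of functions for which, for every $H_0$, $\dim E_f(H_0)\ge H_0$; an upper-bound set (complement of another dense $G_\delta$, or a direct argument) forces $\dim E_f(H)\le H$ for all $H\le1$ and $E_f(H)=\emptyset$ for $H>1$ — the latter because a monotone function on an interval of length $r$ has increment at most $f(1)-f(0)$, so it is automatically in every $\mathcal C^{H}$ with $H\le1$ only after noting the sharper local estimate; actually the cleanest route to $h_f(x)\le1$ for all $x$ in the support is that a non-constant monotone function cannot be locally $o(r)$-flat on a set of positive measure, and the typical $f$ is strictly increasing (increasing functions are dense and the strictly-increasing ones form a dense $G_\delta$), so $h_f(x)\le1$ everywhere, while $h_f(x)\ge0$ trivially; thus $\supp D_f\subset[0,1]$.

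The step I expect to be the main obstacle is the \emph{upper bound} $\dim E_f(H)\le H$ for typical $f$, i.e. showing the iso-Hölder sets are not larger than the devil-staircase heuristic predicts. The lower bound is ``easy'' genericity (adding small structured perturbations), but bounding dimensions from above for \emph{all} $H$ simultaneously requires a covering argument: one must show that for typical $f$, for every $H$ and every $\ep$, the set of $x$ with $h_f(x)<H+\ep$ can be covered by dyadic intervals of generation $j$ whose number is $\le 2^{j(H+2\ep)}$ for $j$ large — equivalently a control on $\#\{I\in\mathcal D_j:\ \mathrm{osc}_I f\ge 2^{-j(H+\ep)}\}$. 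Since $f$ is monotone, $\sum_{I\in\mathcal D_j}\mathrm{osc}_I f=f(1)-f(0)$ is \emph{bounded}, so by a Markov/Chebyshev count $\#\{I:\mathrm{osc}_I f\ge 2^{-j(H+\ep)}\}\le (f(1)-f(0))\,2^{j(H+\ep)}$ automatically, for \emph{every} monotone $f$ — no genericity needed. This $\ell^1$-summability of oscillations is exactly the structural gift of monotonicity that makes the theorem work and collapses the would-be obstacle; the remaining care is only in passing from this box-type count to Hausdorff dimension (standard, via the $\liminf$ in the definition of $h_f$ forcing membership in infinitely many such ``bad'' intervals, then a Borel--Cantelli / mass-distribution estimate) and in verifying that the perturbations used for density genuinely realize every $H\in(0,1)$ as an attained exponent on a set of dimension exactly $H$, which is where the explicit Cantor-staircase computation enters.
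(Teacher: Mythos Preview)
Your overall strategy matches the outline the paper gives (the paper does not prove this theorem; it cites Buczolich--Nagy and only sketches the method): a universal upper bound $D_f(H)\le H$ for every $f\in\mathcal{M}on(\zu)$, obtained from the $\ell^1$-summability $\sum_{I\in\mathcal D_j}\mathrm{osc}_I f=f(1)-f(0)$ and the resulting Chebyshev count, together with a Baire-category lower bound built from Cantor-staircase perturbations inserted at all scales. This is exactly the two-step scheme the paper describes, and your identification of the oscillation $\ell^1$-bound as the structural gift of monotonicity is the right insight.

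There is, however, a genuine error in your treatment of the range $H>1$. You argue that strict monotonicity of a typical $f$ forces $h_f(x)\le 1$ everywhere, but this is false: $f(x)=x$ is strictly increasing with $h_f\equiv+\infty$, and any monotone $f$ that is $C^2$ on a subinterval has $h_f>1$ there. The claim that ``a non-constant monotone function cannot be locally $o(r)$-flat on a set of positive measure'' is also wrong (strictly increasing singular functions have derivative zero a.e.) and in any case ignores the polynomial correction built into the definition of $\mathcal C^H(x_0)$ for $H>1$: once $H>1$ one subtracts an affine part, so first-order flatness is irrelevant. The emptiness of $E_f(H)$ for $H>1$ is \emph{not} a structural consequence of monotonicity; it must be enforced by the Baire construction itself. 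Concretely, the dense open sets must guarantee that at every point and along infinitely many scales $j$, a second-order quantity such as $f(x+2^{-j})+f(x-2^{-j})-2f(x)$ (which annihilates any affine part) is at least of order $2^{-j(1+1/n)}$. The scale-by-scale staircase perturbations in the original argument accomplish precisely this, and it is the missing ingredient in your sketch.
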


Theorem \ref{th-bucnagy} was the starting point of an abundant literature on the subject, examples of which are given in the following. The method consists  first in finding  an upper bound for the singularity spectrum of all functions in $\mathcal{M}on(\zu)$ (here, the diagonal $\sigma(H)=H)$), then an explicit function $F_{typ}$ whose local behavior is the one suspected to be typical, and finally  to construct a countable sequence $(A_n)_{n\geq 1}$ of sets of functions,    dense in $\mathcal{M}on(\zu)$, which are for a given $n$, really close to  $F_{typ}$ at a given scale (depending on $n$). If the parameters are correctly settled, the intersection of the  $(A_n)_{n\geq 1}$ will be the   set of typical functions with \ml behavior similar to that of $F_{typ}$. 

The proof is  based on a careful analysis on local oscillations of functions, and simultaneous constructions of Cantor-like sets $E_f(H)$ carrying the sets of points with pointwise H\"older exponent equal to $H$, for every $f\in \bigcap_{n\geq 1} A_n$.

\medskip

After Theorem \ref{th-bucnagy}, the first direction    consisted in exploring the typical behavior in other standard functional spaces.
The first, spectacular,  results were obtained by Jaffard \cite{JAFF_FRISCH}, who implemented  the same strategy as \cite{BUC_Nagy} but added  wavelet tools to deal with the important examples of H\"older and Besov spaces.

%%%%%%%%%%%%%%%%%%%%%%%%%
\begin{theorem}
\label{th-besov}
1) Let $\alpha>0$ and consider the space  $C^\alpha(\zu^d)$ of $\alpha$-H\"older functions on $\zu^d$. Typical functions in $C^\alpha(\zu^d)$ are monofractal and satisfy 
$$D_f(H)= d\cdot {\bf 1\!\!\! 1}_{\{\alpha\}}(H) + (-\infty)\cdot {\bf 1\!\!\! 1}_{[0,+\infty]\setminus\{\alpha\}}(H).$$

\medskip

2) Let $p\geq 1$ and $s>d/p$, and consider the Besov space $B^{s,p}_q(\zu^d)$. Typical functions in $B^{s,p}_q(\zu^d)$ are \ml and satisfy 
$$D_f(H)= p(H - (s-d/p) )\cdot {\bf 1\!\!\! 1}_{[ s-d/p,s]}(H) + (-\infty)\cdot {\bf 1\!\!\! 1}_{[0,+\infty]\setminus[ s-d/p,s]}(H).$$
In addition, typical functions satisfy the WLMF.

\end{theorem}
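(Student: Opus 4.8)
\textbf{Proof plan for Theorem (typical multifractal behavior in $C^\alpha$ and $B^{s,p}_q$).}

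The plan is to work entirely on the wavelet side, using the wavelet characterization of $C^\alpha(\zu^d)$ (namely $\sup_{\lambda\in\Lambda_j}|d_\lambda| \lesssim 2^{-\alpha j}$) and of $B^{s,p}_q(\zu^d)$ (namely $\big(\sum_j 2^{qj(s-d/p)}(\sum_{\lambda\in\Lambda_j}|d_\lambda|^p)^{q/p}\big)^{1/q}<\infty$, with the usual $L^\infty$ normalization of coefficients used in the excerpt), so that a function in the space is identified with its coefficient sequence $(d_\lambda)_{\lambda\in\Lambda}$, and both spaces become complete metric (Baire) spaces. For part 1), the first step is the upper bound: from $|d_\lambda|\lesssim 2^{-\alpha j}$ and the leader formula $d^L_\lambda = \sup_{\lambda'\subset 3\lambda}|d_{\lambda'}|$ one still gets $d^L_\lambda\lesssim 2^{-\alpha j}$, hence by \eqref{eq-carac} every $x_0$ has $h_f(x_0)\ge \alpha$; so $D_f$ is supported in $[\alpha,+\infty]$ for \emph{every} $f\in C^\alpha$. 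The second step is to show that typically $h_f(x_0)=\alpha$ for all $x_0$: fix a countable dense family of dyadic positions, and for each $j$ let $A_j$ be the set of functions admitting, inside every dyadic cube of some fixed coarse generation, a wavelet coefficient of size $\ge c\,2^{-\alpha j'}$ at some scale $j'\ge j$ at \emph{every} location (a "saturation" condition). Each $A_j$ is open and dense (density: perturb a given coefficient sequence by adding small saturating coefficients, which stays in $C^\alpha$; openness: the condition is a finite collection of strict inequalities on finitely many coefficients up to controllable tails). Then $f\in\bigcap_j A_j$ forces $d^L_{\lambda_{j'}(x_0)}\ge c\,2^{-\alpha j'}$ infinitely often for every $x_0$, whence $h_f(x_0)\le\alpha$, giving $D_f(H)=d\cdot{\bf 1\!\!\! 1}_{\{\alpha\}}(H)+(-\infty)\cdot{\bf 1\!\!\! 1}_{[0,\infty]\setminus\{\alpha\}}(H)$.

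For part 2), the structure is the same but the Besov summability replaces the uniform bound. The upper-bound step is the delicate one: from the Besov condition one derives, for typical $f$, that for each $H$ the set $E_f(H)$ can contain at most $\sim 2^{p(H-(s-d/p))j}$ dyadic cubes of generation $j$ whose leaders are $\approx 2^{-Hj}$ — this is the standard leader/large-deviation counting argument (if there were too many such cubes the Besov $\ell^p$-in-space, $\ell^q$-in-scale norm would diverge), and it yields $D_f(H)\le p(H-(s-d/p))$ on $[s-d/p,s]$ and $D_f(H)=-\infty$ outside; here one uses that leaders control H\"older exponents and that $s>d/p$ forces $C^\varepsilon$-regularity so \eqref{eq-carac} applies. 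The matching lower-bound/genericity step builds $F_{typ}$ as the wavelet series whose coefficients on generation $j$ are placed on a carefully chosen sparse "$s-d/p$ minus a log correction" pattern realizing the full range $[s-d/p,s]$ of exponents with the prescribed linear spectrum (a quasi-binomial distribution of coefficient sizes across the $2^{dj}$ positions), exactly as in Buczolich–Nagy and Jaffard: one sets $A_n$ = functions that, on every cube of a fixed coarse generation, replicate the $F_{typ}$-pattern up to scale $n$; each $A_n$ is open and dense in $B^{s,p}_q$, and for $f\in\bigcap_n A_n$ a ubiquity/mass-transference argument produces, for every $H\in[s-d/p,s]$, a limsup Cantor set $E_f(H)$ of Hausdorff dimension exactly $p(H-(s-d/p))$. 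Finally, the WLMF assertion: the same construction makes $L_f(q)$ computable — the leader sums $L_f(q,j)$ behave like $\sum_H 2^{-qHj}2^{p(H-(s-d/p))j}$, so $L_f(q)=\inf_H(qH - p(H-(s-d/p)))$ on the relevant range, and its Legendre transform returns $D_f$; combined with the already-proved equality $D_f = (L_f)^*$-upper-bound, this gives $D_f=(L_f)^*$, i.e. the wavelet leader multifractal formalism holds typically.

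The main obstacle is the lower bound in part 2): proving that the Cantor-like level sets $E_f(H)$ actually have the full dimension $p(H-(s-d/p))$ for \emph{every} $H$ in the interval \emph{simultaneously}, for a \emph{residual} set of $f$. This requires a uniform ubiquity statement — a single geometric configuration of saturating coefficients that, as $n\to\infty$ along $\bigcap_n A_n$, is dense enough at every scale to support a measure with the right Frostman exponent for each target $H$, while sparse enough to keep the Besov norm finite and to avoid pushing $h_f$ below $s-d/p$ anywhere. Balancing these competing demands (density of $A_n$ in the Besov metric versus the dimension computation, and the logarithmic corrections needed so that the pattern lies in $B^{s,p}_q$ rather than merely in a slightly larger space) is the technical heart; the rest is bookkeeping with the wavelet characterizations and the leader formalism already recalled in the excerpt. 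I would handle it by invoking a mass transference / limsup-set dimension lemma (of the type used in \cite{JAFF_FRISCH}) rather than constructing the measures by hand.
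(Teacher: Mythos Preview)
Your plan is correct and matches the approach the paper attributes to Jaffard \cite{JAFF_FRISCH}: the survey does not give its own proof of this theorem, but it explicitly describes the strategy (upper bound valid for all elements of the space, then a saturating function $F_{typ}$, then open dense sets $A_n$ of functions imitating $F_{typ}$ at scale $n$, with the level sets $E_f(H)$ obtained via Cantor/ubiquity constructions), and your outline is exactly this with the wavelet characterizations in place of oscillations. One small correction: in part 2) the upper bound $D_f(H)\le p(H-(s-d/p))$ on $[s-d/p,s]$ and $D_f(H)=-\infty$ below $s-d/p$ holds for \emph{every} $f\in B^{s,p}_q$ (it is a consequence of the Besov norm alone, via the counting argument you describe), not just for typical $f$; only the sharpness of this bound is a generic statement.
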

%%%%%%%%%%%%%%%%%%%%%%%%%

See figure \ref{figure-besov} for an illustration. 

\begin{figure}
\includegraphics[width=80mm,height=50mm]{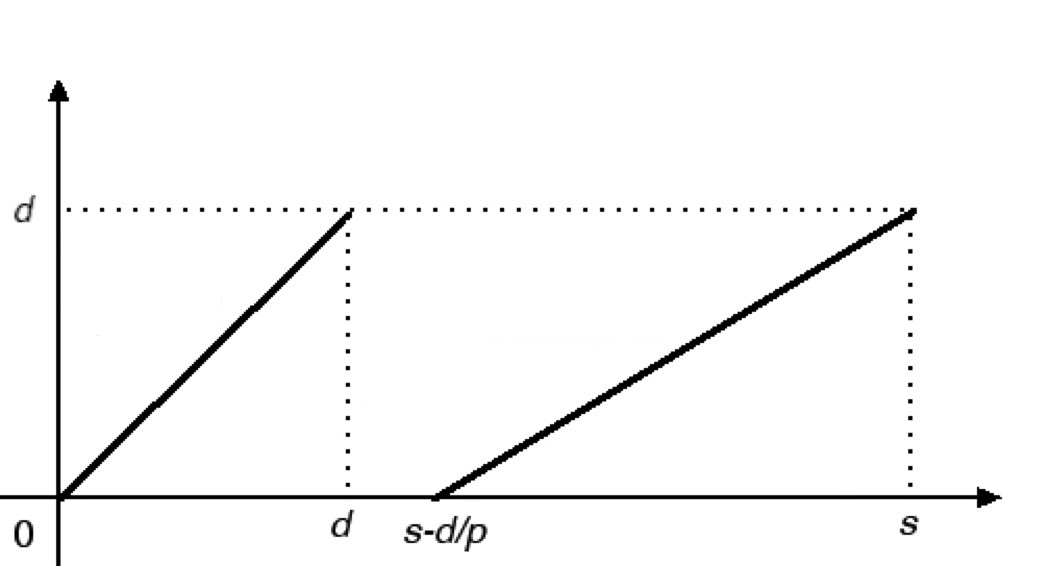}
\caption{Typical singularity spectra of  measures supported on $\zu^d$ ({\bf Left}) and of functions in $B^{s,p}_q(\R^d)$ ({\bf Right}).}
\label{figure-besov}
\end{figure}

Theorems \ref{th-bucnagy} and \ref{th-besov} are striking since they underline the preeminence  of multifractal properties for "everyday" functions. Jaffard also described the \ml behavior of typical functions belonging to countable intersections of Besov spaces, leading to a first answer to the Frisch-Parisi Conjecture. Although these results were a giant step in the domain, only increasing singularity spectra with restricted shapes can be obtained and the typical functions do not obey a satisfactory \ml formalism. Let us also mention that Besov spaces with    indices $s<d/p$ were also considered in \cite{JAFF_FRISCH}.
 
\medskip

Other directions have been investigated. The most natural one concerns probability measures: typical \ml properties were explored  in \cite {BuS2} for measures supported on $\zu^d$ and these results were extended by Bayart \cite{Bay} for measures supported on general compact sets.

\begin{theorem}
Let $K\subset\R^d$ be a compact set, and let $\mathcal{M}(K)$ be  the set of probability measures on $K$.

A typical measure  $\mu \in \mathcal{M}(K)$ satisfies for any $H\in [0, \dim(K))$, $ D_\mu(H)  =H $.

In addition, when the $\dim(K)$-Hausdorff measure of $K$ is strictly positive, then typical measures satisfy $D_\mu(\dim(K))  =\dim(K) $ and obey the multifractal formalism.

\end{theorem}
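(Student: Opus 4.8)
The plan is to run a Baire category argument in $\mathcal{M}(K)$, which is a compact metrizable space (Prokhorov) and hence a complete, Baire space, and to exhibit a residual set of measures realizing the diagonal spectrum. First I would dispose of the routine ingredients. The set $\{\mu:\supp(\mu)=K\}$ is a dense $G_\delta$: for each member $B$ of a countable basis of open sets meeting $K$, $\{\mu:\mu(B)>0\}$ is open and dense, and intersecting over $B$ gives full support. Next, the \emph{universal upper bound} $D_\mu(H)\le\min(H,\dim K)$ holds for \emph{every} $\mu\in\mathcal{M}(K)$: the bound by $\dim K$ is immediate from $E_\mu(H)\subset\supp(\mu)\subset K$, and the bound by $H$ is the classical mass-distribution estimate on $\{x:h_\mu(x)\le H\}$ already quoted in the survey (\cite{BroMichPey,Olsen}) — if roughly $\rho^{-(H+\varepsilon)}$ disjoint balls carry mass $\gtrsim\rho^{H+\varepsilon}$ each, their total mass blows up. So only the matching lower bound $D_\mu(H)\ge H$ for $H\in[0,\dim K)$ has to be shown to be typical, together with the endpoint behaviour.

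The lower bound rests on ``model'' measures built \emph{inside} $K$. For $H<\dim K=\dim_H K$, Frostman's lemma provides a probability measure $\nu_H$ on $K$ with $\nu_H(B(x,r))\le c_H\,r^H$; refining $\nu_H$ by a Moran/Cantor construction inside $\supp(\nu_H)$ (which has dimension $\ge H$) I would produce a compactly supported $m_H\in\mathcal{M}(K)$ whose support $C_H$ is a compact set of dimension exactly $H$ on which $h_{m_H}\equiv H$. When $\mathcal{H}^{\dim K}(K)>0$, the positivity of the Hausdorff measure yields a Frostman measure with exponent $\dim K$, so the same construction delivers $m_H$ for $H=\dim K$ as well. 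These blocks are the only place where the generality of $K$ (as opposed to the cube $\zu^d$ treated in \cite{BuS2}) genuinely intervenes.

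With $\mathcal{D}\subset[0,\dim K)$ a countable dense set, $\varepsilon\in\Q^+$ and $n\in\N$, I would define open sets $\mathcal{O}_{H,\varepsilon,n}\subset\mathcal{M}(K)$ consisting of those $\mu$ for which one can exhibit, using only finitely many dyadic cubes of generation $\ge n$, a configuration ``certifying'' a piece of $\supp(\mu)$ of dimension $\ge H-\varepsilon$ on which the local exponent lies within $\varepsilon$ of $H$; this is a finite list of strict inequalities on the numbers $\mu(\lambda)$, hence open once one restricts to the generic set of $\mu$ charging no dyadic hyperplane (a harmless restriction). Density of $\mathcal{O}_{H,\varepsilon,n}$ comes from the perturbation $\nu\mapsto(1-\delta)\nu+\delta\,m_H$ with $\delta\in\Q^+$ small: the new measure stays arbitrarily close to $\nu$ while forcing, for $n$ large, exactly the required configuration at scales $\le 2^{-n}$, and openness guarantees that earlier conditions survive. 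The residual set is $\{\supp(\mu)=K\}\cap\bigcap_{H,\varepsilon,n}\mathcal{O}_{H,\varepsilon,n}$; a standard extraction shows that any $\mu$ in it satisfies $\dim E_\mu(H)\ge H$ for every $H\in\mathcal{D}$, hence $=H$ by the universal bound. To upgrade from $\mathcal{D}$ to all of $[0,\dim K)$ one arranges the construction so that it attaches to a typical $\mu$ a single nested family of Cantor sets, indexed so that the level set of \emph{every} value $H\in[0,\dim K)$ has dimension $H$ — equivalently, the target exponent is carried continuously through the construction and a $\liminf$/diagonal argument closes the gap.

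For the endpoint $H=\dim K$ and the multifractal formalism, assume $\mathcal{H}^{\dim K}(K)>0$. The block $m_{\dim K}$ gives $\dim E_\mu(\dim K)\ge\dim K$ for typical $\mu$, hence equality. For the formalism one computes $\tau_\mu$: for $q\ge1$, $\sum_\lambda\mu(\lambda)^q\le1$ together with the near-atomic behaviour present at infinitely many scales of a typical measure gives $\tau_\mu(q)=0$, while the Frostman-type spreading of the mass over $\approx 2^{j\dim K}$ dyadic cubes at infinitely many scales gives $\tau_\mu(q)=(q-1)\dim K$ for $q\le1$; the Legendre transform of $q\mapsto\min(0,(q-1)\dim K)$ is precisely the spectrum found above, so $D_\mu=\tau_\mu^*$. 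I expect the main obstacle to be the third paragraph: simultaneously (i) manufacturing the model measures inside an \emph{arbitrary} compact $K$ — the role of Frostman's lemma — and (ii), more delicate, organizing the countably many dense open conditions so that the residual intersection forces the full diagonal $\{D_\mu(H)=H:H\in[0,\dim K)\}$ and not merely equality on the countable set $\mathcal{D}$; this coherent nested Cantor structure, uniform in the exponent, is the combinatorial heart of the argument.
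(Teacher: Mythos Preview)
The paper does not contain a proof of this statement: it is a survey, and the theorem is quoted without argument, with attribution to \cite{BuS2} for the case $K=\zu^d$ and to Bayart \cite{Bay} for the extension to arbitrary compact $K$. There is therefore no ``paper's own proof'' to compare against.

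That said, your outline is a faithful sketch of the strategy actually used in those references: a Baire argument in $\mathcal{M}(K)$, the universal upper bound $D_\mu(H)\le\min(H,\dim K)$, Frostman measures as building blocks inside $K$ (this is indeed the key input that makes Bayart's extension to general $K$ work), and perturbations of the form $(1-\delta)\nu+\delta m_H$ to obtain density of the relevant open sets. You have also correctly identified the genuine difficulty: passing from a countable dense set of exponents to the full interval $[0,\dim K)$ requires organizing the construction so that a single nested family of Cantor-type sets, built simultaneously for the typical $\mu$, realizes every level set $E_\mu(H)$ with the right dimension. In the original proofs this is handled by a careful layered construction rather than by a naive countable intersection, and your description of this as ``the combinatorial heart of the argument'' is accurate. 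Your computation of $\tau_\mu$ in the last paragraph is also the right heuristic, though making the two regimes $q\ge1$ and $q\le1$ rigorous for a \emph{typical} $\mu$ again requires the genericity machinery rather than a direct estimate.
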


Another extension of typical monotone  functions is provided by  the set of monotone increasing in several variables:  A function $f : [0,1]^d \to \R$  is continuous monotone increasing in several variables (in short: MISV) if for all $i \in \{1, ..., d\}$, the coordinate functions
$$
 f^{(i)}(t) = f(x_1, ..., x_{i - 1}, t, x_{i+1}, ..., x_d)
 $$
  are continuous monotone increasing. The set of MISV functions is denoted by MISV$^d$.

With Z. Buczolich, we also investigated      the set $ {{\CC}}$ of continuous convex functions $f: {[0,1]^d}\to {\ensuremath {\mathbb R}}$.

Equipped with the supremum norm $\|\cdot\|$,  $ {\CC}$  and MISV$^d$ are  separable complete metric spaces.   In \cite{BuS6} and \cite{BuS4} we obtained the following results. 
%%%%%%%%%%%%%%%%%%%%%%%%%
\begin{theorem}
\label{th_extension}
 1) Typical functions in MISV$^d$  satisfy  
$$
D_f(H) =   (   d-1+H )\cdot {\bf 1\!\!\! 1}_{[ 0,1]}(H)+    (-\infty)\cdot {\bf 1\!\!\! 1}_{[0,+\infty]\setminus[ 0,1]}(H).
$$

2) Typical functions $f\in \CC$    satisfy  
$$
D_f(H) =  (   d-1 )  \cdot {\bf 1\!\!\! 1}_{\{0\}}(H)  + (   d-2+H)  \cdot {\bf 1\!\!\! 1}_{[ 1,2]}(H) + (-\infty)\cdot {\bf 1\!\!\! 1}_{[0,+\infty]\setminus[ 1,2]\cup\{0\}}(H).$$
 
\end{theorem}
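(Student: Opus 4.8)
The plan is to treat the two classes $\mathrm{MISV}^d$ and $\CC$ in parallel, following the three-step Baire-category machinery described just before the statement (upper bound valid for \emph{all} functions in the class; construction of an explicit extremal function $F_{typ}$; construction of a dense countable family $(A_n)_{n\ge 1}$ whose residual intersection inherits the local behavior of $F_{typ}$). The two novelties relative to the one-dimensional monotone case of Buczolich--Nagy are the extra $d-1$ (or $d-2$) in the dimension count, coming from the fact that in the monotonic-in-several-variables situation a singularity of ``height exponent'' $H$ in the last variable is fibered over a $(d-1)$-dimensional sheet of tame directions, and in the convex case the intrinsic lower bound on the exponent is $1$ (a convex function is locally Lipschitz in the interior), with the value $H=0$ surviving only on the boundary (hence the isolated atom $(d-1){\bf 1\!\!\!1}_{\{0\}}$).

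First I would establish the \emph{upper bounds}. For $f\in\mathrm{MISV}^d$, monotonicity in each variable forces, via a Vitali-type covering / summation argument on the increments $f^{(i)}(t+h)-f^{(i)}(t)$ of the coordinate functions, that the set of points where the exponent in some coordinate direction is $<H$ has ``one-dimensional size at most $H$'' in that direction; intersecting with the other $d-1$ free directions and using a product/Fubini estimate for Hausdorff dimension gives $\dim E_f(H)\le d-1+H$ for $H\le 1$, and $E_f(H)=\emptyset$ for $H>1$ since coordinatewise monotone functions have pointwise exponent $\le 1$ along coordinate lines wherever they are not locally constant. For $f\in\CC$, I would use that a finite convex function on $[0,1]^d$ is locally Lipschitz in the open cube (so $h_f(x_0)\ge 1$ for interior $x_0$) and twice differentiable Lebesgue-a.e.\ (Alexandrov), reducing the interior analysis to second-order increments; a convexity-based counting of dyadic cubes on which the second difference exceeds $2^{-jH}$ then yields $\dim E_f(H)\le d-2+H$ for $1\le H\le 2$, while on the boundary faces (dimension $d-1$) the exponent can drop to $0$, contributing the term $(d-1){\bf 1\!\!\!1}_{\{0\}}$.

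Next I would build the \emph{typical profile}. For $\mathrm{MISV}^d$ the natural candidate is, roughly, $F_{typ}(x)=\sum_i g(x_i)$ where $g$ is a one-dimensional monotone function of Buczolich--Nagy type (a lacunary-type sum or a well-chosen ``devil's staircase'' superposition) whose exponent set realizes $D_g(H)=H{\bf 1\!\!\!1}_{[0,1]}(H)$; the key point is that $E_{F_{typ}}(H)$ then contains a set that is (locally) a product of a dimension-$H$ set in one coordinate with full cubes in the others, giving the matching lower bound $d-1+H$. For $\CC$ one integrates twice: take $F_{typ}$ whose mixed second derivative pattern mimics a monotone function's first-order singularities, so that the ``curvature exponent'' realizes $1+$(a BN-type spectrum), shifted up by $1$; the boundary contribution $0$ on a $(d-1)$-face is arranged by letting $F_{typ}$ be genuinely non-Lipschitz (e.g.\ behave like $|x|^{s}$ with $s<1$ transversally) on an appropriate face. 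Then I would define $A_n$ as the set of $f$ in the class that agree with an affine-rescaled, translated copy of $F_{typ}$ up to scale $2^{-n}$ on a $2^{-n}$-net of subcubes, check density (using that such perturbations can be made within $\mathrm{MISV}^d$, resp.\ $\CC$ — this is where convexity makes the perturbations delicate but still possible by adding small convex bumps), and check openness modulo a nowhere-dense remainder; the residual set $\bigcap_n A_n$ then has, at every point and every scale, oscillation controlled both above and below by that of $F_{typ}$, forcing $D_f=D_{F_{typ}}$.

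The main obstacle, I expect, is the \emph{lower bound combined with density inside the convex class}: one must perturb an arbitrary convex function, within $\CC$ and within an arbitrarily small sup-norm ball, so as to implant the full multifractal structure of $F_{typ}$ at fine scales while never destroying convexity — and simultaneously one must show the implanted singularity sets actually have the claimed Hausdorff dimension $d-2+H$, which requires a genuine (not merely box-dimension) lower bound, hence the construction of appropriate Cantor-like mass distributions on $E_f(H)$ à la Frostman, fibered correctly over the $(d-2)$ tame directions. The $\mathrm{MISV}^d$ case is analogous but easier, since monotone (rather than convex) perturbations are far more flexible; there the only real care is the Fubini-type argument matching the $d-1+H$ upper and lower bounds. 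All of the wavelet-leader bookkeeping used elsewhere in the survey is not needed here, as these spaces are handled directly through increments and second differences.
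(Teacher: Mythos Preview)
The paper is a survey and does \emph{not} contain a proof of Theorem~\ref{th_extension}; the statement is simply quoted from the original articles \cite{BuS6} and \cite{BuS4}, to which the reader is referred. There is therefore no ``paper's own proof'' to compare against.

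That said, your outline follows exactly the three-step Baire scheme the survey describes after Theorem~\ref{th-bucnagy} (uniform upper bound, explicit extremal $F_{typ}$, dense open-like sets $A_n$), and your structural explanations --- the extra $d-1$ coming from fibering over tame coordinate directions in $\mathrm{MISV}^d$, the shift $H\mapsto H+1$ in $\CC$ coming from the passage from first to second differences, and the isolated atom $(d-1)\,{\bf 1\!\!\!1}_{\{0\}}$ coming from non-Lipschitz transverse behaviour on boundary faces --- are the right heuristics and do match what is carried out in \cite{BuS4,BuS6}.

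Two places where your sketch would need real work if you were writing the proof: first, in the $\mathrm{MISV}^d$ upper bound you implicitly use that $h_f(x_0)$ is controlled by the \emph{minimum} of the coordinate exponents, but restriction to a line can only \emph{raise} the H\"older exponent, so the inclusion $E_f(H)\subset\{\text{some coordinate exponent}\le H\}$ is not automatic and requires a genuine argument exploiting the MISV structure. Second, your claim that $E_f(H)=\emptyset$ for $H>1$ because ``monotone functions have exponent $\le 1$'' is false for individual functions (think of $x\mapsto x^3$); it is only true \emph{generically}, so this belongs to the residual-set part of the argument, not to the uniform upper bound. The convex case has the analogous issue at $H=2$. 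These are the points where the actual proofs in \cite{BuS4,BuS6} do nontrivial work.
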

%%%%%%%%%%%%%%%%%%%%%%%%%
 
%%%%%%%%%%%%%%%%%%%%%%%%%%%%%%%%%%
\begin{figure}
  \begin{center}
  \includegraphics[width=9.0cm,height=7.4cm]{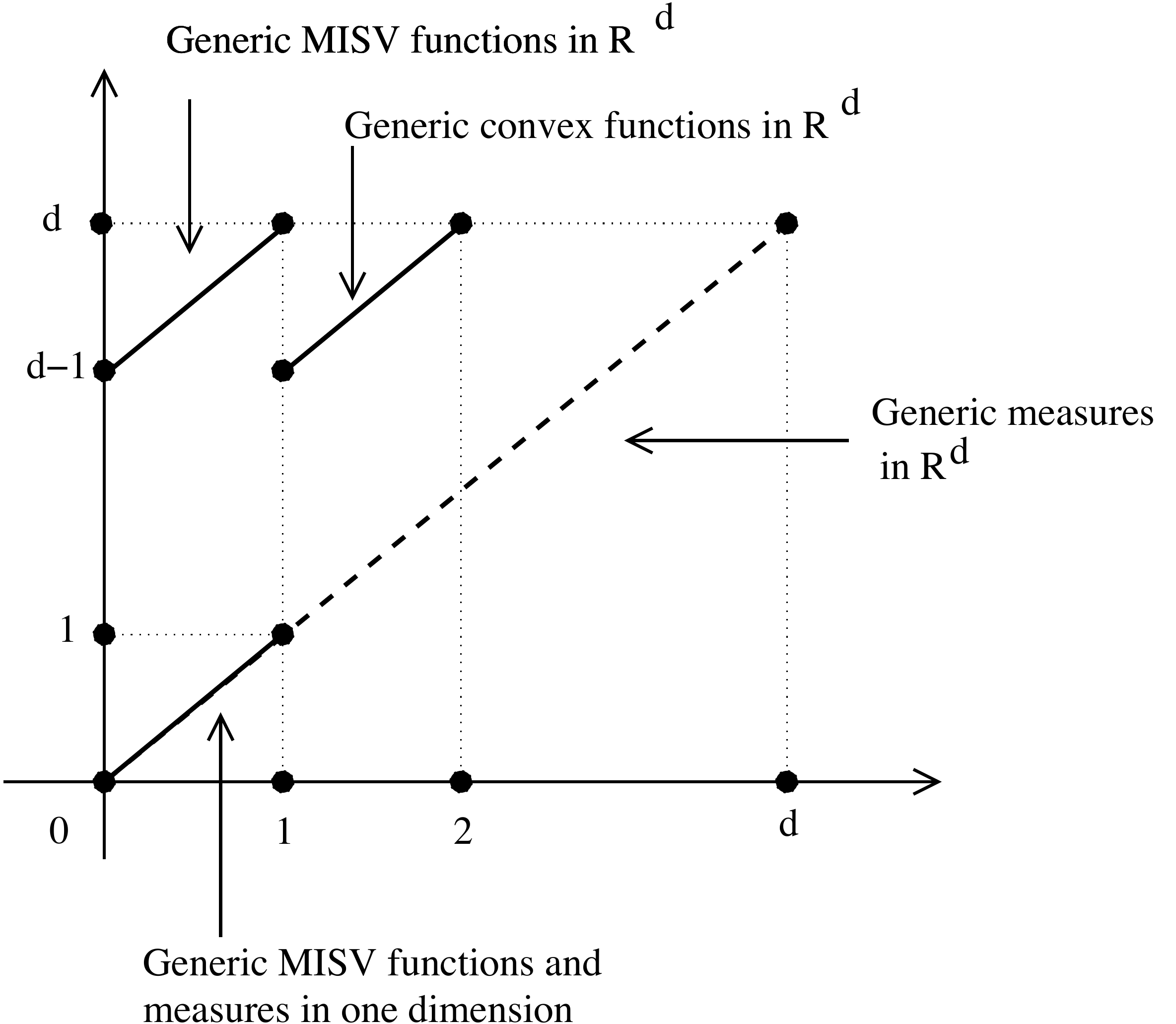}
  
    \caption{Typical singularity spectra for measures,   MISV and   convex functions}
     \label{fig0}
  \end{center}
\end{figure}
%%%%%%%%%%%%%%%%%%%%%%%%%%%%%%%%%%
See Figure   \ref{fig0} for a comparison between   typical \ml behavior  in various functional spaces. This shall also be compared to Figure \ref{figureFP}.
 
It  appears clearly that in all  the previous situations,  the singularity spectra of typical functions have the same shape: it is an affine, increasing, mapping, with no decreasing part. 

Other functional spaces, called   $S_\nu$ spaces  were built in   \cite{Aubry-snu}, in which typical functions all exhibit a singularity spectrum which is {\em visibly increasing} in the sense of  \cite{MamanS}, enlarging the class of possible typical \ml behavior in functional spaces. In addition, these typical functions do not satisfy a \ml formalism in the sense of Definition \ref{def-formalism}.

In order to break this limitation (no   decreasing part in the singularity spectrum), new (and natural) functional spaces have been introduced in \cite{BS-FP}.

%%%%%%%%%%%%%%%%%%%%%%%%%
%%%%%%%%%%%%%%%%%%%%%%%%%
%%%%%%%%%%%%%%%%%%%%%%%%%
%%%%%%%%%%%%%%%%%%%%%%%%%
%%%%%%%%%%%%%%%%%%%%%%%%%
%%%%%%%%%%%%%%%%%%%%%%%%%
%%%%%%%%%%%%%%%%%%%%%%%%%
%%%%%%%%%%%%%%%%%%%%%%%%%
%%%%%%%%%%%%%%%%%%%%%%%%%
%%%%%%%%%%%%%%%%%%%%%%%%%
%%%%%%%%%%%%%%%%%%%%%%%%%
%%%%%%%%%%%%%%%%%%%%%%%%%
%%%%%%%%%%%%%%%%%%%%%%%%%
\section{Besov spaces in multifractal environment}
\label{sec_besovmultifractal}

 Since standard functional spaces do not fulfill our requirements (i.e. typical functions in such spaces do not exhibit concave singularity spectra), it is natural to ask whether there are other functional spaces in which typical functions have any singularity spectrum given in advance, and satisfy a \ml formalism. This solves the Frisch-Paris conjecture as stated in Conjecture \ref{conjfp}.

\medskip

Let $\mathcal{B}(\R^d)$ be the Borel sets included in $\R^d$, and let us introduce the  set of  H\"older set functions
\begin{equation}
\label{defCd}
\mathcal{C}(\R^d) := \left\{\mu:\mathcal{B}(\R^d) \to \R^+  \mbox{ such that }\ \begin{cases}  \exists \,  s_1, s_2\geq 0, \ \forall \, I\subset \R ^d \mbox{ with } |I|\leq 1, \\  \ \   |I|^{s_2} \leq   \mu(I) \leq  |I|^{s_1} \end{cases}\right\}.
\end{equation}
For $\mu\in \mathcal{C}(\R^d)$ and $s\in \R$, we write 
\begin{align*}
\mu^{s} (I) & = \mu(I) ^{s},\\
\mu^{(s)} (I) & = \mu(I) |I|^{s}.
\end{align*}
 
We will use the following notation: for $x,y\in \R^d$, $B[x,y]$ is the smallest Euclidean ball that contains $x$ and $y$.

%%%%%%%%%%%%%%%%%%%%%%%%%%%%%%%%%%% 
\begin{definition}
Let $h\in \R^d$, $f:\R^d \to\R $, and consider   the finite difference operator $\Delta_h f: x\mapsto  f(x+h)-f(x)$. Define for $n\geq 2$ by iteration $\Delta ^n_hf := \Delta_h ( \Delta^{n-1}_h f)$.

For every  set function $\mu\in \mathcal{C}(\R^d)$, let us introduce for $n\geq 2$ 
\begin{equation}
\label{defdeltat}
\Delta ^{\mu,n}_hf(x)  = \frac{\Delta ^n_hf(x)}{\mu(B[x,x+nh])}.
\end{equation}

The  {\em $\mu$-adapted $n$-th order modulus of continuity of $f$} on  $\R^d$ is defined for $t>0$  by
\begin{align}
\label{defomegat}
  \omegat_n(f,t ) _p &= \sup_{t/2\leq |h|\leq t} \| \Delta ^{\mu,n}_hf \|_{L^p(\R^d)}.
\end{align}
%with $\R^d_{h,n} = \{x\in \zu^d : B[x,x+nh] \subset   \zu^d \}.$
  \end{definition}
  %%%%%%%%%%%%%%%%%%%%%%%%%%%%%%%%%%% 

It is trivial to check that  that when $\mu(I)=1$ for every set $I$, then $ \omegat_n(f,t) _p$ coincides with the so-called homogeneous  $n$-th order modulus of continuity of $f$  
$$
 \omega_n(f,t) _p = \sup_{t/2\leq |h|\leq t} \| \Delta ^n_hf \|_{L^p(\R^d)}.
$$

%%%%%%%%%%%%%%%%%%%%%%%%%%%%%%%%%%% 
\begin{definition}
\label{defbmu}
Let $\mu\in \mathcal{C}(\R ^d)$ associated with exponents $0<s_1\leq s_2$  in \eqref{defCd}.

Let $n\geq s_2$. For $1\leq p,q\leq +\infty$, the Besov space  in $\mu$-environment  $B^{\mu,p}_q(\R^d)$ is the space of those functions $f:\R^d\to\R$  such that  $ \|f\|_{L^p(\R^d)}  <+\infty$ and 
\begin{equation}
\label{defbpqmuosc}
 |f|_{B^{\mu ,p}_q} = \|2^{jd/p}  (\omegat_n(f, 2^{-j}) _p)_{j\geq 1}\|_{\ell^q(\N)} <+\infty.
\end{equation}
Finally, let us introduce the spaces 
\begin{equation}
\label{defbmu2}
\btilde^{\mu,p}_q(\R^d) = \bigcap_{0<\ep<s_1/2} B^{\mu^{(-\ep)},p}_q(\R^d).
\end{equation}
\end{definition}
%%%%%%%%%%%%%%%%%%%%%%%%%%%%%%%%%%% 

The reader can check  that ${B}^{\mu,p}_{q}(\R^d)$, when endowed with the topology induced by the norm $\|f\|_{B^{\mu p}_q} =\|(\beta(k))_{k\in \Z^d}\|_p+|f|_{B^{\mu,p}_q}$, forms  a  Banach space (recall \eqref{defbeta} for the definition of $\beta(k)$).

The intuition behind Definition \ref{defbmu} consists in introducing some space-dependent constraints that will create heterogeneity at all scales. Indeed,  when a function $f$ belongs to $B^{\mu,p}_q(\R^d)$,  its oscillations $\Delta ^n_hf(x)$ must be very small in certain regions (around points $x$ where $\mu(B(x,r)) \sim r^\alpha$ with $\alpha$ large), while in   other regions   (where $\mu(B(x,r)) \sim r^\alpha$ with $\alpha$ small) the control of the oscillations can be relaxed.  

\mk

In \cite{BS-FP}, a wavelet characterization of  ${B}^{\mu,p}_{q}(\R^d)$ and $\btilde^{\mu,p}_q(\R^d) $ is proved when $\mu$  is an almost-doubling set function satisfying condition (P) (recall equation \eqref{ad} and Remark \ref{rk11}). Observe indeed that Definition \ref{mildcond1} of the condition (P) for measures can easily be extended for set functions $\mu\in \mathcal{C}(\R^d)$.

For this, let us introduce a second semi-norm for $f\in L^p(\R^d)$ : we set
$$|f|_{p,q,\mu} = \|(A_j)_{l\geq 1}\|_ {\ell^q(\N)},  \mbox{ where  } A_j=  \left(\sum_{\lambda\in \Lambda_j} \left|\frac{d_\lambda}{\mu(\lambda)}\right|^p\right)^{1/p} .$$
The following inequalities are proved in \cite{BS-FP}. \begin{theorem}
\label{th-waveletcarac}
Let $\mu\in \mathcal{C}(\R^d)$ be an almost doubling set function satisfying condition (P), and let $\Phi$ be a scaling function associated with   wavelets  $\Psi$  (see   Section \ref{sec-prescrfunc}).

Let $p\geq 1$, and $q\in [1,+\infty]$.

  Assume  that   the wavelets $\Psi$  are compactly supported,  belong to the standard Besov space $B^{s,p}_{q}(\R^d)$ for some $s>d/p +s_2$, and  possess  at least $\lfloor s_2 \rfloor+1$   vanishing moments ($s_1$ and $s_2$ are the exponents associated with $\mu$ in \eqref{defCd}).
 
For every $0<\ep <s_1$, there exists a constant $C>1$ (not depending on $f$)  such that  
\begin{align}
\label{eqnorm}
\|f\|_{L^p}+ |f|_{{B}^{\mu,p}_{q}} & \leq C( \|f\|_{L^p}+|f|_{\mu^{(+\ep)},p,q} )\\
\label{eqnorm2}
 \|f\|_{L^p}+|f|_{\mu,p,q}& \leq C(  \|f\|_{L^p}+ |f|_{{B}^{\mu^{(+\ep)},p}_{q}  }).
\end{align}
 
 Moreover, when $\mu$ is doubling, \eqref{eqnorm}  and \eqref{eqnorm2}  hold for $\ep= 0$, and the norms $\|f\|_{L^p}+ |f|_{p,q,\mu} $ and $ \|f\|_{L^p}+|f|_{B^{\mu,p}_q} $ are equivalent.
%
%Then $f\in B^{\mu,p}_{q}(\R^d)$ if and only if 
%$$f\in L^p(\R^d) \mbox { and } (A_j)_{l\geq 1}\in \ell^q(\N),  \mbox{ where  } A_j=  \left(\sum_{\lambda\in \Lambda_j} \left|\frac{d_\lambda}{\mu(\lambda)}\right|^p\right)^{1/p}  .$$
\end{theorem}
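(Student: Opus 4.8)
The plan is to establish the two inequalities \eqref{eqnorm} and \eqref{eqnorm2} by transferring back and forth between the $\mu$-adapted modulus of continuity $\omegat_n(f,2^{-j})_p$ and the wavelet quantities $A_j = (\sum_{\lambda \in \Lambda_j} |d_\lambda/\mu(\lambda)|^p)^{1/p}$, exploiting the fact that for almost-doubling $\mu$ satisfying (P), the value $\mu(B[x,x+nh])$ for $|h|\sim 2^{-j}$ is comparable (up to the $2^{\pm\theta(j)}$ factors, which absorb into the $\ep$-loss via $\mu^{(\pm\ep)}$) to $\mu(\lambda_j(x))$ on the relevant dyadic cube. First I would recall the classical two-sided estimate relating $n$-th order finite differences of a function to its wavelet coefficients when the wavelets have $\geq \lfloor s_2\rfloor + 1$ vanishing moments and lie in $B^{s,p}_q$ with $s > d/p + s_2$: schematically, $\|\Delta^n_h f\|_{L^p}$ at scale $|h|\sim 2^{-j}$ is controlled above and below by a weighted sum of $|d_\lambda|$ over $\lambda$ at generations near $j$, with geometric decay in the generation gap, exactly the Littlewood--Paley/atomic-decomposition machinery underlying the wavelet characterization of ordinary Besov spaces $B^{s,p}_q$. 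The whole proof is then the ``divided'' version of this: divide the difference by $\mu(B[x,x+nh])$ on the left, divide $d_\lambda$ by $\mu(\lambda)$ on the right, and check the division factors match up to the controlled error.

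Concretely, for \eqref{eqnorm} (wavelet side controls the modulus): fix $h$ with $2^{-j-1}\leq |h| \leq 2^{-j}$ and expand $f = \sum_\lambda d_\lambda \psi_\lambda$. Then $\Delta^{\mu,n}_h f(x) = \mu(B[x,x+nh])^{-1}\sum_\lambda d_\lambda \Delta^n_h\psi_\lambda(x)$. Split the sum over $\lambda \in \Lambda_{j'}$ according to whether $j' \leq j$ or $j' > j$. For $j' > j$, use the smoothness of $\psi$ (i.e. $\|\Delta^n_h \psi_\lambda\|_\infty \lesssim \min(1, (2^{j'}|h|)^n)$ is replaced by the $L^p$-normalized bound coming from $\psi \in B^{s,p}_q$) to get decay $2^{-(j'-j)(s - d/p - s_2)}$ after inserting $\mu(\lambda) \leq |\lambda|^{s_1} $ and $\mu(B[x,x+nh]) \geq (n|h|)^{s_2}$; for $j' \leq j$, the vanishing moments give decay $2^{-(j-j')(n - \cdots)}$ since $n \geq s_2$; in both regimes property (P) lets one replace the ``far'' $\mu(\lambda)$ appearing after the division by the ``local'' $\mu(\lambda_j(x))$ at the cost of a factor $2^{\pm\theta(j')}\,2^{\pm(j-j')\ep}$ — this is precisely where the shift $\mu \to \mu^{(+\ep)}$ enters, and where the hypothesis that $\theta \in \Theta$ (so $\theta(j') - \theta(j) \leq \ep(j'-j)$ for large indices) is used to sum the geometric series. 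Taking $L^p$ norms in $x$, summing in $j'$, applying the $\ell^q$ norm in $j$ and a discrete Young/convolution inequality then yields $|f|_{B^{\mu,p}_q} \lesssim \|f\|_{L^p} + |f|_{\mu^{(+\ep)},p,q}$. The reverse inequality \eqref{eqnorm2} runs the same circle of ideas in the other direction: one recovers $d_\lambda$ from $f$ by a local averaging against $\psi_\lambda$ (using that $\psi_\lambda$ itself is, up to a smoothing kernel, expressible through finite differences of the appropriate order, exactly as in the classical Besov wavelet characterization), divides by $\mu(\lambda)$, relates $\mu(\lambda)$ to $\mu(B[x,x+nh])$ for nearby $h$ via (P) and almost-doubling, and sums.

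The main obstacle I anticipate is the bookkeeping around the ``division by $\mu$'' interacting with the hierarchical/geometric sum over scales: in the ordinary Besov setting the weight is the clean power $2^{-js}$, which factors cleanly out of each scale and makes the $\ell^q$ summation trivial; here the weight $\mu(B[x,x+nh])^{-1}$ is spatially varying, so the decoupling of the sum $\sum_{\lambda\in\Lambda_{j'}} d_\lambda \Delta^n_h\psi_\lambda(x)$ into a clean per-cube estimate requires controlling, uniformly in $x$, the ratio $\mu(B[x,x+nh])/\mu(\lambda_{j'}(x'))$ for every cube $\lambda_{j'}(x')$ whose support meets a neighborhood of $x$ of radius $\sim n|h|$. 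This is exactly what condition (P) is designed to furnish — via \eqref{propmu}, applied along the chain of dyadic ancestors/neighbors connecting $\lambda_j(x)$ to $\lambda'$ — but making the resulting error $2^{\theta(\min(j,j'))}2^{|j-j'|\ep}$ summable against the smoothness/vanishing-moment decay (which is why one needs the strict gain $s > d/p + s_2$ and $n \geq s_2$, with a margin to absorb an $\ep$) is the technical heart, and is precisely why the statement is forced to lose the factor $\mu \to \mu^{(\pm\ep)}$ in general and can only reach $\ep = 0$ when $\mu$ is genuinely doubling (so one may take $\theta \equiv 0$).
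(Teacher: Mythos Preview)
The paper does not actually prove this theorem: it is a survey, and immediately before the statement the author writes ``The following inequalities are proved in \cite{BS-FP}''. There is therefore no in-paper proof to compare your proposal against.

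That said, your outline is the natural one and matches what one expects the argument in \cite{BS-FP} to be: reproduce the classical wavelet/Littlewood--Paley characterization of Besov spaces, but carry the spatially varying weight $\mu(B[x,x+nh])^{-1}$ through the estimates, using property~(P) and almost-doubling to compare $\mu(B[x,x+nh])$ with $\mu(\lambda_{j'})$ for the cubes $\lambda_{j'}$ that actually contribute at each scale. Your identification of the mechanism behind the $\ep$-loss --- the factor $2^{\theta(j)}$ from (P2), summable against the geometric decay only after trading it for $2^{j\ep}$ via $\theta(j)=o(j)$, hence the shift $\mu\to\mu^{(\pm\ep)}$, collapsing to $\ep=0$ exactly when $\theta\equiv 0$ --- is correct and is indeed the technical heart. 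One minor slip in your sketch: in the direction \eqref{eqnorm} (wavelets control the modulus), the roles of smoothness and vanishing moments are essentially reversed from what you wrote --- for $j'\le j$ (coarse pieces) it is the \emph{smoothness} of $\psi_\lambda$ that makes $\Delta^n_h\psi_\lambda$ small, while for $j'>j$ (fine pieces) the geometric gain comes from property~(P) itself (the factor $2^{-(j'-j)s_1}$ in \eqref{propmu}) rather than from the wavelet regularity; the vanishing moments are used primarily in the reverse direction \eqref{eqnorm2} to extract $d_\lambda$ from finite differences of $f$. This does not affect the overall strategy, which is sound.
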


Last theorem supports the idea that $\btilde^{\mu,p}_q(\R^d) $  is the right space to work with, since it is characterized by wavelet coefficients, while the spaces  $B^{\mu,p}_q(\R^d) $ are not (unless $\mu$ is doubling).

\mk

The main theorem in \cite{BS-FP}  is the following.
%%%%%%%%%%%%%%%%%%%%%%%%%%%%%%%%%%%%%%%%%%%%
\begin{theorem}
\label{main} 
Let $\sigma\in \mathcal{S}_d$ be an admissible singularity spectrum (recall \eqref{def-Sd}). Call $H_s$ the smallest value at which $\sigma(H)=d$.

There exists   an almost doubling set function $\mu\in \mathcal{C}(\R^d)$ satisfying condition (P)  and $p\in [ 1,+\infty]$ such that for every $q\in [1,+\infty]$, typical functions $f \in  \widetilde {B}^{\mu,p}_{q}(\R^d)$ possess the following properties:
\begin{itemize}
\item
$D_f = \sigma$
\item
$f$ satisfies the WLMF for every $H\leq H_s$.

\item
$f$ satisfies the weak-WLMF for every $H> H_s$.

\end{itemize}

In addition:

\begin{itemize}
\item when $p=+\infty$, typical functions in   $  \widetilde {B}^{\mu,p}_{q}(\R^d)$   satisfy $D_f=D_\mu$. 

\item
when $\mu$ is doubling, the same holds for $  {B}^{\mu,p}_{q}(\R^d)$   instead of $  \widetilde {B}^{\mu,p}_{q}(\R^d)$.
\end{itemize}
\end{theorem}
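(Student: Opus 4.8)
\textbf{Proof strategy for Theorem \ref{main}.} The plan is to transfer the measure-theoretic construction of Theorem \ref{th-prescrmu2} (in the refined form of Remark \ref{rk11}, i.e.\ a measure $\mu$ satisfying condition (P)) into a \emph{functional} statement about Baire-typical elements of $\widetilde B^{\mu,p}_q(\R^d)$. First I would fix $\sigma\in\mathcal{S}_d$ and, using the normalization argument from the proof of the prescription theorem for functions (the affine change $H\mapsto\alpha H+\beta$ realized at the level of wavelet coefficients), reduce to the case where $\sigma(H)\le H$ on its support with equality at some point; this lets me invoke Theorem \ref{th-prescrmu2}/Remark \ref{rk11} to produce an almost-doubling set function $\mu\in\mathcal{C}(\R^d)$ satisfying (P) whose Legendre-dual data match $\sigma$, and then choose $p$ so that the exponent $s_1,s_2$ bookkeeping in \eqref{defCd} is compatible with the target spectrum (the value $p$ is essentially dictated by the slope of $\sigma$ at $H_s$, exactly as $p$ governs the increasing branch $p(H-(s-d/p))$ in the classical Besov result, Theorem \ref{th-besov}). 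The reference function $F_{typ}$ playing the role of $F_{typ}$ in the Buczolich--Nagy scheme will be $F_\mu$ (or its rescaled version $F$), whose wavelet coefficients are $d_\lambda=\mu(\lambda)$ and which by Theorem \ref{th-Fmu} has $D_{F_\mu}=D_\mu=\sigma$ and, by comparing \eqref{def-tau} with \eqref{def-L} together with the hierarchical structure of $\mu$, satisfies $\tau_\mu=L_{F_\mu}$, hence the WLMF.

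The core of the argument is the Baire-category machinery, carried out using the wavelet characterization of Theorem \ref{th-waveletcarac}: since $\widetilde B^{\mu,p}_q(\R^d)=\bigcap_{0<\ep<s_1/2}B^{\mu^{(-\ep)},p}_q(\R^d)$ is characterized (up to equivalent norms) by the sequence spaces $|f|_{p,q,\mu^{(-\ep)}}$, working with wavelet coefficients is legitimate and the space is a complete metric (Fr\'echet) space, hence a Baire space. I would then (i) prove an \emph{upper bound} $D_f(H)\le\sigma(H)$ valid for \emph{every} $f\in\widetilde B^{\mu,p}_q(\R^d)$: this follows from the membership constraint, which forces $|d_\lambda|\lesssim \mu(\lambda)|\lambda|^{-\ep}$ on a full-measure-in-$\ell^q$ set of scales, combined with the characterization \eqref{eq-carac} of $h_f$ via wavelet leaders and the fact that $\mu$ satisfies (P) (so that leaders of $f$ are controlled by leaders of $\mu$ up to subexponential factors), yielding a bound on the Hausdorff dimension of each $E_f(H)$ by the corresponding level set of $\mu$, whose dimension is $D_\mu(H)=\sigma(H)$ by Theorem \ref{th-prescrmu2}; (ii) construct a countable family $(A_n)_{n\ge1}$ of dense open subsets of $\widetilde B^{\mu,p}_q$ such that every $f\in\bigcap_n A_n$ has wavelet leaders that, along a suitable subsequence of scales, agree with those of $F_\mu$ up to controlled multiplicative errors on a set of dyadic cubes dense enough to build the Cantor-like sets $E_f(H)$ realizing the lower bound $D_f(H)\ge\sigma(H)$; density of each $A_n$ comes from the fact that finitely-supported (in wavelets) perturbations are dense and can be pushed towards $F_\mu$ at scale $\ge n$ without leaving the space.

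For the formalism statements I would argue that on $\bigcap_n A_n$ the wavelet-leader sums $L_f(q,j)$ are, along the chosen subsequence $(j_n)$, comparable to $L_{F_\mu}(q,j_n)$, so $\widetilde L_f=\tau_\mu$ and thus $D_f=\sigma=\tau_\mu^*=\widetilde L_f^*$ gives the weak-WLMF for all $H$; for $H\le H_s$ (the increasing branch, where the spectrum does not depend on the wavelet choice, cf.\ the Remark after \eqref{eq-carac}) the comparison can be upgraded to \emph{all} large scales $j$, not just the subsequence, because on the increasing part the relevant sums are governed by a liminf that the density construction controls uniformly, yielding the full WLMF there. The two addenda are then corollaries: when $p=+\infty$ the $\ell^\infty$ constraint $\sup_j 2^{jd/p}(\cdots)=\sup_j(\cdots)<\infty$ degenerates the heterogeneity budget so that \emph{every} admissible $f$ already has $|d_\lambda|\asymp\mu(\lambda)$ at the generic scales, forcing $D_f=D_\mu$ without any category argument; and when $\mu$ is doubling, Theorem \ref{th-waveletcarac} gives an honest norm equivalence (valid at $\ep=0$) between $B^{\mu,p}_q$ and the wavelet sequence norm, so the whole argument runs verbatim with $B^{\mu,p}_q$ in place of $\widetilde B^{\mu,p}_q$.

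\textbf{Main obstacle.} The delicate point is step (ii): one must choose the subsequence of scales and the density of the dyadic cubes on which $f$ is forced to mimic $F_\mu$ so that, simultaneously for \emph{every} $H$ in the support of $\sigma$, the resulting sets $E_f(H)$ have Hausdorff dimension exactly $\sigma(H)$ and \emph{no larger} (the upper bound of step (i) must not be spoiled by the perturbations), all while keeping each $A_n$ open and dense in a Fr\'echet space defined by an intersection over $\ep$. Balancing these requirements — openness/density (which wants big, crude perturbations) against the precise dimensions of the level sets (which wants tightly controlled perturbations respecting the (P)-structure of $\mu$) — is exactly where the almost-doubling hypothesis and condition (P) do the real work, and is the technical heart that \cite{BS-FP} has to carry out carefully.
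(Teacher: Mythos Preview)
Your outline cannot be compared to the paper's own proof because this survey does not contain one: Theorem~\ref{main} is simply stated as ``the main theorem in \cite{BS-FP}'' and the only information the text adds is (a) the remark that the pair $(\mu,p)$ is not unique, and (b) the concluding paragraph of Section~\ref{sec_besovmultifractal}, which confirms that in \cite{BS-FP} one first proves a \emph{uniform upper bound} on $D_f$ for all $f\in\widetilde B^{\mu,p}_q$ and then determines the typical spectrum. Your steps (i)--(ii) match that two-step scheme, and your use of the Baire machinery (reference function $F_{typ}$, dense open sets $A_n$ forcing coincidence with $F_{typ}$ at prescribed scales) is exactly the template the survey describes in Section~\ref{sec-typical} after Theorem~\ref{th-bucnagy}. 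So at the level of strategy your proposal is aligned with what the paper hints at, but there is no proof here to benchmark the details against.

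Two cautions about specifics you commit to that the survey does not support. First, the survey does \emph{not} say that the affine change $H\mapsto\alpha H+\beta$ (used in Section~\ref{sec-prescrfunc} for the plain prescription theorem) is how \cite{BS-FP} selects $(\mu,p)$; the addendum ``when $p=+\infty$, $D_f=D_\mu$'' shows that in general $D_f$ depends on both $\mu$ and $p$, so the decreasing branch of $\sigma$ is encoded in $\mu$ while $p$ shapes the increasing branch --- your slope heuristic is plausible but speculative here. Second, your explanation of the $p=+\infty$ addendum (``$|d_\lambda|\asymp\mu(\lambda)$ \dots\ without any category argument'') overshoots: the statement in the theorem is still about \emph{typical} functions, not all functions, so a Baire argument is still needed; what is special is only that the resulting typical spectrum coincides with $D_\mu$.
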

%%%%%%%%%%%%%%%%%%%%%%%%%%%%%%%%%%%%%%%%%

Also, given $\sigma\in \mathcal{S}_d$,  from the proof in \cite{BS-FP} it can be checked that the couple $(\mu,p)$ in Theorem \ref{main} is not unique. 

Theorem \ref{main} brings a solution to the Frisch-Parisi conjecture (Conjecture \ref{conjfp}). The fact that the (strong) \ml formalism holds only for  the increasing part of the singularity spectrum (when $H\leq H_s$) seems to be unavoidable. A heuristic   explanation of   the weak validity of the \ml formalism in the decreasing part of the spectrum (and not the full validity)  is that    functions have usually sparse wavelet representations,  generating very large values for negative values of $q$ for $L_f(q,j)$ on some values of $j$.

\medskip

Let us conclude this section by mentioning that a deeper study of the $B^{\mu,p}_q$ and $\widetilde B^{\mu,p}_q$ spaces is performed in \cite{BS-FP}, leading to results that have their own interest. More precisely,  a uniform upper bound for the singularity spectrum of all functions in $B^{\mu,p}_q$ and $\widetilde B^{\mu,p}_q$ is found, as well as the singularity spectrum of typical functions in these spaces for large classes of almost-doubling measures $\mu$. Without giving  details on the results, it appears that the   singularity spectra $D_f$ of typical functions $f$ may have very different shapes depending on the initial measure $\mu$, and the proofs involve many arguments coming from geometric measure theory, ergodic theory and harmonic analysis.

 %%%%%%%%%%%%%%%%%%%%%%%%%
%%%%%%%%%%%%%%%%%%%%%%%%%
%%%%%%%%%%%%%%%%%%%%%%%%%
%%%%%%%%%%%%%%%%%%%%%%%%%
%%%%%%%%%%%%%%%%%%%%%%%%%
%%%%%%%%%%%%%%%%%%%%%%%%%
%%%%%%%%%%%%%%%%%%%%%%%%%
%%%%%%%%%%%%%%%%%%%%%%%%%
%%%%%%%%%%%%%%%%%%%%%%%%%
%%%%%%%%%%%%%%%%%%%%%%%%%
%%%%%%%%%%%%%%%%%%%%%%%%%
%%%%%%%%%%%%%%%%%%%%%%%%%
%%%%%%%%%%%%%%%%%%%%%%%%%
\section{Perspectives}
\label{sec_perspectives}

First of all, we are far from being exclusive on generic dimensional results in analysis (see for instance  \cite{Fraysse-jaff-kah,  Gruslys-1}), and many other   regularity properties shall definitely be studied from the Baire genericity standpoint.

In this survey we focused on the notion of Baire genericity -  the same issues can (and must) be addressed in the {\em prevalence} sense. Many results regarding prevalent \ml properties have been obtained, see  \cite{AuMaSeu,fraysse-jaffard-1, Fraysse1,Ol1,Ol2} amongst many references, and asking whether prevalent  properties coincide with generic ones can sometimes bring some surprises (when they do not coincide).

Finally, one challenging research  direction consists in establishing \ml properties for (clas\-ses of) solutions to ordinary or partial differential equations, as well as for the stochastic counterparts. Indeed, \ml ideas originate from the study of turbulence and other physical phenomena that are ruled by ODEs,  SDEs or (S)PDEs,  and it would be a fair return to demonstrate the multifractality of (some of) those functions that are solutions to such equations.
A few examples already exist (i.e., Burgers equation with a Brownian motion as initial condition \cite{bertoin-jaffard} and  large classes of stochastic jump diffusions \cite{BFJS-markov,yang2018ihp}), but they are only a first step toward a systematic \ml  analysis of solutions to (partial) differential  equations, which will certainly require the development of new techniques and approaches.

 %%%%%%%%%%%%%%%%%%%%%%%%%
%%%%%%%%%%%%%%%%%%%%%%%%%
%%%%%%%%%%%%%%%%%%%%%%%%%
%%%%%%%%%%%%%%%%%%%%%%%%%
%%%%%%%%%%%%%%%%%%%%%%%%%
%%%%%%%%%%%%%%%%%%%%%%%%%
%%%%%%%%%%%%%%%%%%%%%%%%%
%%%%%%%%%%%%%%%%%%%%%%%%%
%%%%%%%%%%%%%%%%%%%%%%%%%
%%%%%%%%%%%%%%%%%%%%%%%%%
%%%%%%%%%%%%%%%%%%%%%%%%%
%%%%%%%%%%%%%%%%%%%%%%%%%
%%%%%%%%%%%%%%%%%%%%%%%%%
\section*{Acknowlegments}

The author wish to thank the organizers of the fantastic conference "Fractal Geometry and Stochastics 6", and the referee for all her/his relevant   and useful comments on this text.

%%%%%%%%%%%%%%%%%%%%%%%%%%%%%%%%%%
%%%%%%%%%%%%%%%%%%%%%%%%%%%%%%%%%%
%%%%%%%%%%%%%%%%%%%%%%%%%%%%%%%%%%
%%%%%%%%%%%%%%%%%%%%%%%%%%%%%%%%%%
%%%%%%%%%%%%%%%%%%%%%%%%%%%%%%%%%%
%%%%%%%%%%%%%%%%%%%%%%%%%%%%%%%%%%
%%%%%%%%%%%%%%%%%%%%%%%%%%%%%%%%%%
%%%%%%%%%%%%%%%%%%%%%%%%%%%%%%%%%%
%%%%%%%%%%%%%%%%%%%%%%%%%%%%%%%%%%
%%%%%%%%%%%%%%%%%%%%%%%%%%%%%%%%%%
%%%%%%%%%%%%%%%%%%%%%%%%%%%%%%%%%%
%%%%%%%%%%%%%%%%%%%%%%%%%%%%%%%%%%
%%%%%%%%%%%%%%%%%%%%%%%%%%%%%%%%%%
%%%%%%%%%%%%%%%%%%%%%%%%%%%%%%%%%%
%%%%%%%%%%%%%%%%%%%%%%%%%%%%%%%%%%
%%%%%%%%%%%%%%%%%%%%%%%%%%%%%%%%%%
%%%%%%%%%%%%%%%%%%%%%%%%%%%%%%%%%%
%%%%%%%%%%%%%%%%%%%%%%%%%%%%%%%%%%
%%%%%%%%%%%%%%%%%%%%%%%%%%%%%%%%%%
%%%%%%%%%%%%%%%%%%%%%%%%%%%%%%%%%%
%%%%%%%%%%%%%%%%%%%%%%%%%%%%%%%%%%
%%%%%%%%%%%%%%%%%%%%%%%%%%%%%%%%%%
%%%%%%%%%%%%%%%%%%%%%%%%%%%%%%%%%%

\bibliographystyle{plain}
\bibliography{Biblio_stephane}

\begin{thebibliography}{10}

\bibitem{AJW-1}
P.~Abry, S.~Jaffard, and H.~Wendt.
\newblock Irregularities and scaling in signal and image processing:
  Multifractal analysis.
\newblock In M.~Frame Ed., editor, {\em Benoit Mandelbrot: A Life in Many
  Dimensions}, pages 31--116. World Scientific, 2015.

\bibitem{AWJ-2}
P.~Abry, H.~Wendt, and S.~Jaffard.
\newblock When {V}an {G}ogh meets {M}andelbrot: Multifractal classification of
  painting's texture.
\newblock {\em Signal Processing}, 93(3):554--572, mars 2013.

\bibitem{Aubry-snu}
J.-M. Aubry, S.~Bastin, F.~Dispa, and S.~Jaffard.
\newblock The spaces $s_\nu$: new spaces defined with wavelet coefficients and
  related to multifractal analysis.
\newblock {\em Int. J. Appl. Math. Statist.}, 7:82--95, Feb. 2007.

\bibitem{AuMaSeu}
J.-M. Aubry, D.~Maman, and S.~Seuret.
\newblock Local behavior of traces of besov functions: Prevalent results.
\newblock {\em J. Func. Anal.}, 264(3):631--660, 2013.

\bibitem{AJT-prescription}
A.~Ayache, S.~Jaffard, and Murad~S. Taqqu.
\newblock Wavelet construction of generalized multifractional processes.
\newblock {\em Rev. Mat. Iberoamericana}, 23(1):327--370, 2007.

\bibitem{Banach-typical}
S.~Banach.
\newblock {\"U}ber die {B}aire'sche kategorie gewisser funktionenmengen.
\newblock {\em Studia Math.}, 3:174-- 179, 1931.

\bibitem{Barralinverse}
J.~Barral.
\newblock Inverse problems in multifractal analysis of measures.
\newblock {\em Ann. Ec. Norm. Sup}, 48(6):1457--1510, 2015.

\bibitem{BFJS-markov}
J.~Barral, N.~Fournier, S.~Jaffard, and S.~Seuret.
\newblock A pure jump {M}arkov process with a random singularity spectrum.
\newblock {\em Ann. Probab.}, 38(5):1924--1946, 2010.

\bibitem{BSFmu}
J.~Barral and S.~Seuret.
\newblock From multifractal measures to multifractal wavelet series.
\newblock {\em J. Fourier Anal. Appl.}, 11(5):589--614, 2005.

\bibitem{BS-FP}
J.~Barral and S.~Seuret.
\newblock Besov spaces in multifractal environement, and the {F}risch-{P}arisi
  conjecture.
\newblock {\em preprint}, 2019.

\bibitem{Bay}
F.~Bayart.
\newblock Multifractal spectra of typical and prevalent measures.
\newblock {\em Nonlinearity}, 26:353--367, 2013.

\bibitem{bertoin-jaffard}
J.~Bertoin and S.~Jaffard.
\newblock Solutions multifractales de l'{\'e}quation de burgers.
\newblock {\em Matapli}, 52:19--28, 1997.

\bibitem{BroMichPey}
G.~Brown, G.~Michon, and J.~Peyri\`ere.
\newblock On the multifractal analysis of measures.
\newblock {\em J. Stat. Phys.}, 66:775--790, 1992.

\bibitem{BUC_Nagy}
Z.~Buczolich and J.~Nagy.
\newblock {H}\"older spectrum of typical monotone continuous functions.
\newblock {\em Real Anal. Exchange}, pages 133--156, 1999.

\bibitem{BuS2}
Z.~Buczolich and S.~Seuret.
\newblock Typical borel measures on $[0,1]^d$ satisfy a multifractal formalism.
\newblock {\em Nonlinearity.}, 23(11):7--13, 2010.

\bibitem{BuS6}
Z.~Buczolich and S.~Seuret.
\newblock {H}\"older spectrum of functions monotone in several variables.
\newblock {\em J. Math. Anal. Appl.}, 1:110--126, 2011.

\bibitem{BuS3}
Z.~Buczolich and S.~Seuret.
\newblock Measures and functions with prescribed singularity spectrum.
\newblock {\em J. Fractal Geometry}, 1(3):295--333, 2014.

\bibitem{BuS4}
Z.~Buczolich and S.~Seuret.
\newblock Multifractal properties of typical convex functions.
\newblock {\em Monatshefte f{\"u}r Mathematik}, 2019.

\bibitem{Daub92}
I.~Daubechies.
\newblock {\em Ten Lectures on Wavelets}.
\newblock SIAM, 1992.

\bibitem{Fraysse1}
A.~Fraysse.
\newblock Regularity criteria of almost every function in a {S}obolev space.
\newblock {\em J. Func. Anal.}, pages 1806--1821, 2010.

\bibitem{fraysse-jaffard-1}
A.~Fraysse and S.~Jaffard.
\newblock How smooth is almost every function in a {S}obolev space?
\newblock {\em Rev. Mat. Iberoamericana}, 22:663--682, 2006, 22, pp.663-682.

\bibitem{Fraysse-jaff-kah}
A.~Fraysse, S.~Jaffard, and J.-P. Kahane.
\newblock Quelques propri{\'e}t{\'e}s g{\'e}n{\'e}riques en analyse.
\newblock {\em Notes aux CRAS, S{\'e}rie I, Math.}, 340:645--651, 2005.

\bibitem{FrischParisi}
U.~Frisch and D.~Parisi.
\newblock Fully developed turbulence and intermittency in turbulence, and
  predictability in geophysical fluid dynamics and climate dynamics.
\newblock {\em International school of Physics ``Enrico Fermi", course 88,
  edited by M.~Ghil, North Holland}, pages 84--88, 1985.

\bibitem{Gruslys-1}
V.~Gruslys, V.~Jonusas, V.~Mijovi{\'c}, O.~Ng, L.~Olsen, and I.~Petrykiewicz.
\newblock Dimensions of prevalent continuous functions.
\newblock {\em Monatshefte f{\"u}r Mathematik}, 166(2):153--180, 2012.

\bibitem{Halsey}
T.C. Halsey, M.H. Jensen, L.P. Kadanoff, I.~Procaccia, and B.I. Shrai\-man.
\newblock Fractal measures and their singularities: the characterisation of
  strange sets.
\newblock {\em Phys.\ Rev.~A}, 33:1141--1151, 1986.

\bibitem{Hentschel}
H.G. Hentschel and I.~Procaccia.
\newblock The infinite number of generalized dimensions of fractals and strange
  attractors.
\newblock {\em Physica D}, pages 435--444, 1983.

\bibitem{JAFFNOTE}
S.~Jaffard.
\newblock Exposants de {H}\"older en des points donn\'es et coefficients
  d'ondelettes.
\newblock {\em C.~R.~Acad.\ Sci.\ Paris S\'erie I}, 308:79--81, 1989.

\bibitem{Jaff-prescription}
S.~Jaffard.
\newblock Construction de fonctions multifractales ayant un spectre de
  singularit{\'e}s prescrit.
\newblock {\em C.R.A.S.}, 315(1):19--24, 1992.

\bibitem{jaffard-prescribed}
S.~Jaffard.
\newblock Functions with prescribed {H}\"older exponent.
\newblock {\em Appl. Comput. Harmon. Anal.}, 2:400--401, 1995.

\bibitem{JAFF_FRISCH}
S.~Jaffard.
\newblock On the {F}risch-{P}arisi conjecture.
\newblock {\em J. Math. Pures Appl.}, 79(6):525--552, 2000.

\bibitem{JAFF_WAVTECH}
S.~Jaffard.
\newblock Wavelet techniques in multifractal analysis.
\newblock In {\em Fractal Geometry and Applications: A Jubilee of Benoit
  Mandelbrot}, Proc. Symposia in Pure Mathematic. AMS Providence, RI, 2004.

\bibitem{LashermesRouxJaffardAbry}
B.~Lashermes, S.~Roux, S.~Jaffard, and P.~Abry.
\newblock Comprehensive multifractal analysis of turbulent velocity using
  wavelet leaders.
\newblock {\em Eur. Phys. J. B.}, 61(2):201--215, 2008.

\bibitem{Leonarduzzi-text}
R.~Leonarduzzi, P~Abry, S.~Jaffard, H.~Wendt, L.~Gournay, T.~Kyriacopoulou,
  C.~Martineau, and C.~Martinez.
\newblock $p$-leader multifractal analysis for text type identification.
\newblock In {\em IEEE Int. Conf. Acoust., Speech, and Signal Proces.
  (ICASSP)}, New Orleans, USA, march 2017.

\bibitem{MamanS}
D.~Maman and S.~Seuret.
\newblock Fixed points for the multifractal spectrum map.
\newblock {\em Constructive approximation}, 43(3):337--356, 2016.

\bibitem{M2}
B.~B. Mandelbrot.
\newblock Multiplications al\'{e}atoires it\'{e}r\'{e}es et distributions
  invariantes par moyennes pond\'{e}r\'{e}es.
\newblock {\em C. R. Acad. Sci. Paris}, 278:289--292 et 355--358, 1974.

\bibitem{Mazu1}
S.~Mazurkiewicz.
\newblock Sur les fonctions non d{\'e}rivables.
\newblock {\em Studia Math.}, 3:92--94, 1931.

\bibitem{Meyer_operateur}
Y.~Meyer.
\newblock {\em Ondelettes et op{\'e}rateurs I}.
\newblock Hermann, 1990.

\bibitem{Muzy-arneodo-1}
J.-F. Muzy, E.~Bacry, and A.~Arn\'eodo.
\newblock Multifractal formalism for fractal signals: The structure-function
  approach versus the wavelet-transform modulus-maxima method.
\newblock {\em Phys. Rev. E}, 47:875--884, 1993.

\bibitem{Olsen}
L.~Olsen.
\newblock A multifractal formalism.
\newblock {\em Adv. Math.}, 116:92--195, 1995.

\bibitem{Ol1}
L.~Olsen.
\newblock Fractal and multifractal dimensions of prevalent measures.
\newblock {\em Indiana Univ. Math. J.}, 59 (2):661--690, 2010.

\bibitem{Ol2}
L.~Olsen.
\newblock Prevalent ${L}^q$-dimensions of measures.
\newblock {\em Math. Proc. Cambridge Philos. Soc}, 149(3):553--571, 2010.

\bibitem{yang2018ihp}
X.~Yang.
\newblock Multifractality of jump diffusion processes.
\newblock {\em Ann. Inst. Henri Poincar{\'e} Probab. Stat.}, 54(4):2042--2074,
  2018.

\end{thebibliography}

\end{document}